\newtheorem{theorem}{Theorem}[section]
\newtheorem{lemma}[theorem]{Lemma}
\newtheorem{thm}{Theorem}[section]
\newtheorem{defn}{Definition}[section]
\newtheorem{oss}{Remark}[section]
\newcommand{\refcite}{\cite}
\newcommand{\norm}[1]{{\left\Vert #1 
		\right\Vert}}
\newcommand{\R} {\mathbb{R}}
\newcommand\blfootnote[1]{%
  \begingroup
  \renewcommand\thefootnote{}\footnote{#1}%
  \addtocounter{footnote}{-1}%
  \endgroup
}
\begin{document}

\markboth{M.~Grasselli et al.}{Non-isothermal non-Newtonian fluids: the stationary case}

%%%%%%%%%%%%%%%%%%% Publisher's Area please ignore %%%%%%%%%%%%%%%%%%%%%%%
%
\title{Non-isothermal non-Newtonian fluids: the stationary case}

\date{}

\author
{Maurizio Grasselli, \, Nicola Parolini, \, Andrea Poiatti, \, Marco Verani \thanks{Dipartimento di Matematica,
Politecnico di Milano,
Milano I-20133, Italy. E-mail adresses: \textit{mau\-ri\-zio.grasselli@polimi.it}, \textit{nicola.parolini@polimi.it}, \textit{andrea.poiatti@polimi.it}, \textit{marco.verani@polimi.it}}}
\maketitle
%\begin{history}
%\received{(Day Month Year)}
%\revised{(Day Month Year)}
%%\accepted{(Day Month Year)}
%\comby{(xxxxxxxxxx)}
%\end{history}

\begin{abstract}
The stationary Navier-Stokes equations for a non-Newtonian incompressible fluid are coupled with the stationary heat equation and subject to Dirichlet type boundary conditions. The viscosity is supposed to depend on the temperature and the stress depends on the strain through a suitable power law depending on $p\in (1,2)$ (shear thinning case). For this problem we establish the existence of a weak solution as well as we prove some regularity results both for the Navier-Stokes and the Stokes cases. Then, the latter case with the Carreau power law is approximated through a FEM scheme and some error estimates are obtained. Such estimates are then validated through some two-dimensional numerical experiments.
\end{abstract}

\medskip

\noindent \textit{Keywords:} Incompressible non-isothermal fluids, power law fluids, shear thinning, existence, regularity, finite elements, a priori error estimates.

\blfootnote{The paper has been published in \textit{Mathematical Models and Methods in Applied Sciences}, 33(9) pp. 1747-1801, 2023, DOI: 10.1142/S0218202523500410}

\medskip

\section{Introduction}\setcounter{equation}{0}
In the last decades, the theoretical and numerical analysis of non-Newtonian fluids has seen a renew of interest, stimulated by the wide spectrum of applied problems, ranging from biological applications to industrial processes.
In particular, concerning the latter, we point out that many processes deal with materials exhibiting non-Newtonian behaviors. For instance, food processing, polymer manufacturing, tribology, injection molding of foams, rubber extrusion. Moreover, most of them are carried out under non-isothermal conditions. Motivated by these important applications, in the present paper we focus on a class of non-isothermal non-Newtonian fluid models (see, for instance, Refs. \refcite{Ahmed,Alsteens,Chiruvella:1996,farid,Ghoreishy,Ishikawa,Helmig} and references therein).
We recall that the description of a non-Newtonian fluid behavior is based on a power-law ansatz. We refer the reader to Refs. \refcite{Batchelor4} and \refcite{Lady} for a general continuum mechanical background
and to Refs. \refcite{M3,Bird6,23H,35malek,50Rue,44Ra} for a detailed discussion of several models for non-Newtonian fluids. In particular, for a large class of non-Newtonian fluids, the dominant departure
from a Newtonian behavior is that, in a simple shear flow, the viscosity and the
shear rate are not proportional. These are the so-called fluids with shear-dependent
viscosity. Here we are interested in the theoretical and numerical analysis of steady flows of such fluids accounting for the presence of a non-negligible temperature field. In particular, we consider the case where  the viscosity depends on the temperature. Very similar models, in the Newtonian case, have been analyzed, e.g., in Ref. \refcite{Bernardi-et-al:2015}. A slightly more complex model has been derived and investigated in Ref.  \refcite{C2} (see also the references therein). In that case, the authors consider a non-stationary model for incompressible homogeneous Newtonian fluids in a fixed bounded three-dimensional domain. 

Let us now describe the problem we want to analyze. We consider an incompressible fluid which occupies a bounded domain $\Omega\subset \mathbb{R}^d,$ $d=2,3,$ with a sufficiently smooth boundary. We denote by $\mathbf{u}:\Omega \to \mathbb{R}^d$ its velocity field and by $\Theta:\Omega \to \mathbb{R}$ its temperature.
We suppose that $(\mathbf{u},\Theta)$ satisfies the following equations
\begin{align}
-&\mbox{div}\left[\nu(\Theta) \mathbf{\tau}(x,\mathbf{\varepsilon}(\mathbf{u}))\right] +(\mathbf{u}\cdot\nabla)\mathbf{u}+\nabla\pi= \mathbf{f} \qquad \hbox{in }\Omega
\label{Sy01}\\
&\mbox{div }\mathbf{u}=0\qquad \hbox{in }\Omega \label{Sy02}\\
-&\kappa\Delta \Theta + \mathbf{u} \cdot \nabla\Theta = g \qquad \hbox{in }\Omega\label{Sy03}
\end{align}
endowed with the boundary conditions
\begin{equation}
\mathbf{u} = \mathbf{0}, \quad \Theta = \theta  \qquad \hbox{on }\partial \Omega.
\label{bc}
\end{equation}
Here  $\nu>0$ is the viscosity coefficient, $\tau$ denotes the stress tensor which is a suitable function of the strain rate tensor $\varepsilon(\mathbf{u})$ defined by $\varepsilon_{ij}(\mathbf{u}):=\frac{1}{2}(\partial_{x_i} u_j + \partial_{x_j}u_i)$. Moreover, $\pi$ is the fluid pressure, $\mathbf{f}$ is a given body force, $\kappa>0$ is the heat diffusion coefficient, $g$ is a known heat source, and $\theta$ is a given boundary temperature. {The Dirichlet boundary condition for the temperature is also assumed in Ref. \refcite{Bernardi-et-al:2015}. Neumann or Robin boundary conditions can also be considered though mixed ones can be problematic whenever regularity is required}.

Concerning the constitutive law which defines $\tau$, we observe that most real non-Newtonian fluids that can be modeled by a constitutive law such that
\begin{align}
\vert \tau(x, \mathbf{B})\vert \leq \tau_2(1 +  \vert \mathbf{B}\vert)^{p-1},\qquad x\in\Omega \nonumber
\end{align}
with $\mathbf{B}\in \mathbb{R}^{d \times d}$ symmetric tensor and  $\tau_2$ positive constant, are shear thinning fluids, namely, the shear exponent $p$, is ``small'', i.e., $p \in (1, 2)$ (cf. Ref. \refcite{36m} for a discussion of such models and further
references). This is the case we consider in the present contribution.

The mathematical analysis of the Navier-Stokes problem for non-Newtonian fluids started with
the work of Ladyžhenskaya.\cite{Lady} After the fundamental works of the ``Prague school'' led by Ne\v{c}as et. al. (see Refs. \refcite{Male,MNRR} and the references therein), the problem has been intensively studied and various existence and regularity
properties have been proved in the last years. The literature on this subject is rather vast. Thus we only mention some of the main   contributions which are mostly related to the stationary case. In the isothermal case, there are several results
on the existence of weak solutions,\cite{Stein} interior regularity\cite{Seregin} and regularity up-to-the boundary for the Dirichlet problem.\cite{Beirao2,Beirao1,Berselli,Crispo,Ruzi,Shilkin} Moreover, we refer to Ref.  \refcite{Malek} for some $C^{1,\alpha}$-regularity results. 
However, there are not so many contribution on the non-isothermal case. In Ref. \refcite{Rubi}, the author obtains the existence of a distributional solution to a steady-state system of equations for incompressible, possibly non-Newtonian of the $p$-power type, viscous flow coupled with the heat equation in a smooth
bounded domain of $\R^d$, $d=2,3$. Notice that in this model the fluid viscosity is considered to be independent of the temperature. In Ref. \refcite{Quasi}, the authors analyze a system of equations describing the stationary flow of a quasi-Newtonian
fluid, with temperature dependent viscosity and with a viscous heating. Existence of at least one appropriate weak solution is proved. 
In Ref. \refcite{Consiglieri} the   existence of weak solutions to the coupled system of stationary equations for a class of general non-Newtonian fluids with energy transfer is established.

Moreover, in the aforementioned Ref. \refcite{C2}, the authors establish the large-data and long-time existence of a suitable class of weak solutions to the corresponding problem regarding unsteady flows of incompressible homogeneous Newtonian fluids. In Sec.2 of Ref. \refcite{Bernardi-et-al:2015} the authors prove the existence and (conditional) uniqueness of weak solutions, together with some regularity results, to problem \eqref{Sy01}-\eqref{bc} in the case of Newtonian fluids, which is a simplification of the model treated in Ref. \refcite{C2}. In particular, it describes the stationary flow of a viscous incompressible Newtonian fluid, in the case where the viscosity of the fluid depends on the temperature. The mathematical analysis of a very similar problem in the case of a non-Newtonian fluid, with the exponent $p$ depending on the temperature, can be found in Ref. \refcite{C4}. There, in particular, the existence of a weak solution is obtained, taking $p$ constant, for $p \in (3d/(d+2),2]$. Also, a conditional uniqueness for close solutions is shown.

On the numerical side, some pioneering results on steady isothermal non-Newtonian incompressible Stokes fluids, were given in Refs. \refcite{Baranger-Najib:1990,Barrett-Liu:1993,Barrett-Liu:1994,Glowinski-Rappaz:2003,Sandri:1993}. More recently, in Refs. \refcite{Belenki-et-al:2012} and \refcite{Hirn:2013},  optimal error estimates for shear thinning non-Newtonian fluids have been addressed, whereas in Ref. \refcite{Sandri:2014}  a finite element method based on a four-field formulation of the nonlinear Stokes equations has been considered. 
Moreover, without aiming at completeness, we refer to the book Ref. \refcite{handbook:2011} and the references therein, and to Refs. \refcite{Diening-Kreuzer-Suli:2013,Ghattas:2015,Kreuzer-Suli:2016,Eckstein:2018,Ko-Suli:2019} as recent relevant contributions on the numerical discretization of generalized Stokes problems. We also point out that the approximation of steady isothermal non-Newtonian incompressible Stokes fluid problems on polytopal meshes have been addressed in Ref. \refcite{Botti_et_al:2021}.
Concerning non-isothermal Newtonian fluid problems, we refer to Ref. \refcite{Bernardi-et-al:2015}, where spectral elements have been employed, and to Ref. \refcite{Djoko-et-al:2020} which considers  the finite element approximation of the heat equation coupled with Stokes equations under threshold type boundary condition. It is worth mentioning that a very recent contribution is devoted to the numerical approximation of the steady state problem for an isothermal incompressible heat-conducting fluid with implicit non-Newtonian rheology. \cite{Farrell-Suli-et-al:2021} There, it is proven that the sequence of numerical approximations  converge to a weak solution to the original problem. Also, a block preconditioner is introduced to efficiently solve the resulting system of nonlinear equations. 

\medskip

The goal of this paper is twofold: theoretical and numerical. Regarding the former, we establish first the existence of a solution of \eqref{Sy01}-\eqref{Sy03} under rather general assumptions on the stress-strain relationship in the shear thinning case. Then we obtain  some regularity results. In particular, we extend the regularity results of Ref. \refcite{BerselliRou} for the 2D Stokes problem, and the ones of Ref. \refcite{Beirao2} for the 3D Navier-Stokes problem, to the non-isothermal case. Concerning the numerical approximation of \eqref{Sy01}-\eqref{Sy03}, we consider the Stokes approximation and we choose the Carreau law as constitutive stress-strain law.  This choice is motivated by relevant applications in the context of mixing and extrusion processes of polymeric materials. In this kind of problems, the inertia terms in the Navier-Stokes equations can often be neglected, due to the high effective viscosity leading to Reynolds numbers lower than
unity.\cite{Chiruvella:1996,Fard:2013,Guzman:2003} On account of the existence and regularity results obtained for the continuous problem, we perform an \textit{a priori} error analysis of a FEM approximation scheme of the problem and then we verify the convergence results by means of some numerical experiments.  

The outline of the paper is the following. In Section \ref{sec:notation} we introduce the main assumptions and the functional framework.   Section \ref{sec::mainTheoretical} contains the notion of weak solution together with the main analytical results of the paper. Then, in Section \ref{sec:approx}, we study an approximating problem whose results are exploited in the proof of some analytical results and in the numerical analysis part. Section \ref{sec::proofsTheoretical} is devoted to the proofs of the theoretical results. The numerical aspects can be found in Section \ref{numerical}, namely, the analysis of the discrete problem along with \textit{a priori} estimates for the error as well as some numerical experiments. {Finally, an appendix contains some technical tools which are used in the proof of the main theorems.}
\section{Notation and functional setting}\setcounter{equation}{0}
\label{sec:notation}
Our basic assumptions are the following:
\begin{description}
\item[(H1)] $\nu\in W^{1,\infty}(\mathbb{R})$
and there exists $\nu_1>0$ such that
\begin{align}
\nu_1\leq \nu(s),\qquad\forall s\in\mathbb{R};\nonumber
\end{align}

\item[(H2)] $\tau : \Omega \times \mathbb{R}_{sym}^{d \times d} \to \mathbb{R}_{sym}^{d \times d}$ is a Carath\'{e}odory function;

\item[(H3)] there exist $\tau_1,\tau_2>0$ such that, for all $\mathbf{B} \in \mathbb{R}_{sym}^{d \times d}$ and almost 
any $x\in \Omega$,
\begin{align}
& \tau(x, \mathbf{B})\cdot\mathbf{B}\geq \tau_1 (\vert \mathbf{B}\vert^p -1), \nonumber \\
& \vert \tau(x, \mathbf{B})\vert \leq \tau_2(1 +  \vert \mathbf{B}\vert)^{p-1}, \nonumber
\end{align}
where $p\in (1,2)$ (shear thinning case);

\item[(H4)] for all $\mathbf{B}_1, \mathbf{B}_2 \in \mathbb{R}_{sym}^{d \times d}$ such that $\mathbf{B}_1 \not=\mathbf{B}_2$ and almost any $x\in \Omega$, we have (strict monotonicity)
$$(\tau(x,\mathbf{B}_1) - \tau(x,\mathbf{B}_2)) \cdot (\mathbf{B}_1 - \mathbf{B}_2) > 0. $$

\end{description}
Here $\cdot$ and $\vert \cdot \vert$ denote the scalar product and the Euclidean norm in $\mathbb{R}_{sym}^{d \times d}$ (or in $\mathbb{R}^d$), respectively.

We then set $H:=L^2(\Omega)$, $V:=H^1(\Omega)$, $V_0: =H^1_0(\Omega)$, and
\begin{align}
&\mathcal{V}:= \left\{\mathbf{v} \in \textbf{C}^\infty_c(\Omega)\,:\, \mbox{div }\mathbf{u}=0 \; \hbox{ in } \Omega\right\}, \nonumber \\
& \textbf{L}^r_{div}:= \overline{\mathcal{V}}^{\textbf{L}^r(\Omega)-norm}, \nonumber \\
& \textbf{V}^r_{div}:= \overline{\mathcal{V}}^{\textbf{L}^r(\Omega)-gradient\, norm}, \nonumber
\end{align}
where $r\in [1,\infty)$.

We denote by $\|\cdot\|$ and $(\cdot,\cdot)$ the norm and the
scalar product on both $H$ or $\textbf{H}$. In particular, we set $\Vert v \Vert_{V_0} := \Vert \nabla v \Vert$.
If $X$ is a (real) Banach space
then $X'$ will denote its dual
and $\langle\cdot,\cdot\rangle$ will stand
for the duality pairing between $X^\prime$ and $X$. Moreover, {we consider the Stokes operator $\textbf{A}=-P\Delta$, where $P$ is the Leray orthogonal projector onto ${\textbf{L}}^2_{div}$}.
{ We also need to introduce the standard Bogovskii operator $B$ (see, e.g., Ref. \refcite{Bogo}). Denoting by $W^{1,q}_0(\Omega)$ the closure of $C^\infty_c (\Omega)$ in $W^{1,q}(\Omega)$), $1<q<\infty$, we know that $B: L^q_{(0)}(\Omega)\to W^{1,q}_0(\Omega)$ satisfies the equation $\text{div }Bf=f$, where $(0)$ stands for the zero integral mean, and we recall that, for any $q\in (1,\infty)$, it holds
	\begin{equation}
	\Vert Bf\Vert_{W^{1,q}(\Omega)}\leq C\Vert f\Vert_{L^q(\Omega)}.
	\label{p1} 
	\end{equation} 
	for some constant $C>0$ depending on $q$. Besides, if $f=\text{div }\textbf{g}$, where $\textbf{g}\in \textbf{L}^q(\Omega)$ is such that $\text{div }\textbf{g}\in L^q(\Omega)$ and $\textbf{g}\cdot \textbf{n}=0$ on $\partial\Omega$, then we have 
	 \begin{equation}
	 	\Vert Bf\Vert_{L^q(\Omega)}\leq C\Vert \textbf{g}\Vert_{\textbf{L}^q(\Omega)}.
	 \label{p2}
	 \end{equation}
  }

Concerning the data, the basic hypotheses are 

\begin{description}
\item[(H5)] $\mathbf{f} \in \textbf{W}^{-1,p^\prime}(\Omega)$;

\item[(H6)] $g \in V^\prime$;

\item[(H7)] $\theta \in H^{1/2}(\partial\Omega)$.
\end{description}
Here $p^\prime$ denotes the conjugate index of $p$.

Denoting by $\Theta_0\in V$ the Dirichlet lift of $\theta$, namely the (weak) solution to the Dirichlet problem
\begin{equation}
\begin{aligned}
-\kappa&\Delta \Theta_0 =0 \qquad \hbox{in }\Omega \label{eq:Theta0}\\
& \Theta_0 = \theta \qquad \hbox{on }\partial\Omega 
\end{aligned}
\end{equation}
and setting $\vartheta=\Theta - \Theta_0$, we can rewrite \eqref{Sy01}-\eqref{bc} as follows

\begin{align}
-&\mbox{div}(\nu(\vartheta + \Theta_0) \mathbf{\tau}(\mathbf{\varepsilon}(\mathbf{u})) +(\mathbf{u}\cdot\nabla)\mathbf{u}+\nabla\pi= \mathbf{f} \qquad \hbox{in }\Omega
\label{Sy04}\\
&\mbox{div }\mathbf{u}=0\qquad \hbox{in }\Omega \label{Sy05}\\
-&\kappa\Delta \vartheta + \mathbf{u} \cdot \nabla\vartheta = g - \mathbf{u} \cdot \nabla\Theta_0 \qquad \hbox{in }\Omega\label{Sy06}\\
&\mathbf{u} = \mathbf{0}, \quad \vartheta = 0 \qquad \hbox{on }\partial \Omega.
\label{Sy07}
\end{align}

We now introduce the definition of weak solution, namely,

\begin{defn}
\label{weaksol}
A pair $(\mathbf{u},\vartheta)\in \textbf{V}^p_{div} \times V_0$ is a weak solution to \eqref{Sy01}-\eqref{bc} if
\begin{align}
&\int_\Omega \nu(\vartheta + \Theta_0) \tau_{ij}(x,\varepsilon(\mathbf{u}))\varepsilon_{ij}(\mathbf{v})dx
-\int_\Omega u_j u_i \partial_{x_j} v_i dx = \langle \mathbf{f}, \mathbf{v} \rangle \qquad \forall\, \mathbf{v}\in \mathcal{V}, \label{fluid}\\
&\int_\Omega \kappa \nabla \vartheta \cdot \nabla \phi - \int_\Omega u_j\vartheta \partial_{x_j}\phi = \langle g,\phi \rangle
+ \int_\Omega u_j\Theta_0 \partial_{x_j}\phi \qquad \forall\,\phi\in V_0\cap {W}^{1,d}(\Omega). \label{temperature}
\end{align}
\end{defn}
\begin{oss}
Notice that the definition is well posed, since, when $d=3$, $\phi\in V_0\cap W^{1,3}(\Omega)$ ensures that the convective term in \eqref{temperature} is finite, by the embeddings $\textbf{V}^p_{div}\hookrightarrow \textbf{L}^q(\Omega)$, for some $q>2$, and $V_0\hookrightarrow L^6(\Omega)$. On the contrary, in the case $d=2$ it is enough to consider $\phi\in V_0$, by the embeddings $\textbf{V}^p_{div}\hookrightarrow \textbf{L}^q(\Omega)$, for some $q>2$, and $V_0\hookrightarrow L^r(\Omega)$ for any $r\in[2,\infty)$. { Formulation \eqref{temperature} does not allow to consider $\vartheta$ as a test function, in the case $d=3$. We will take care of this problem whenever necessary (see in particular the conditional uniqueness result of Theorem \ref{uniquebis} for the Stokes problem). Moreover, when $p>\frac{3d}{d+2}$, we can deduce by density that \eqref{fluid} also holds for any $\mathbf{v}\in \textbf{V}_{div}^p$. In conclusion, when $d=3$, if we assume $p>\frac{3}{2}$, then we can extend by density the validity of \eqref{temperature} for any $\phi\in V_0$, thus retrieving the standard definition of a weak solution in which the test space (i.e. $V_0$) is the same as the solution space.}  
\label{test}
\end{oss}
\section{Main theoretical results}
\label{sec::mainTheoretical}
We present here our main theoretical results. The existence of a weak solution is given by
\begin{thm}
\label{exist} Let assumptions (H1)-(H7) hold. If $p\in (2d/(d+2),2)$ then there exists a weak solution in the sense of Definition \ref{weaksol}.
\end{thm}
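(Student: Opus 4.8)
The plan is to construct a solution by a Galerkin scheme in both the velocity and the temperature variables, to obtain uniform a priori bounds in $\mathbf{V}^p_{div}\times V_0$, and then to pass to the limit using compactness for the lower-order (convective and viscosity) terms together with the strict monotonicity (H4) for the nonlinear stress. As an alternative one could decouple the system by a Schauder fixed-point argument --- freeze $\vartheta$, solve the momentum balance for $\mathbf{u}$ by the theory of pseudomonotone operators, then solve the linear advection--diffusion heat equation for a new $\vartheta$ --- but either route rests on the same two ingredients, so I describe the direct one.

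First I would derive the basic energy estimates. Testing \eqref{fluid} with $\mathbf{v}=\mathbf{u}$, the convective term $\int_\Omega u_j u_i\partial_{x_j}u_i$ vanishes since $\mathrm{div}\,\mathbf{u}=0$ and $\mathbf{u}\in\mathbf{V}^p_{div}$; then (H1) and the coercivity in (H3), together with Korn's inequality in $L^p$, give
\[
\nu_1\tau_1\big(\norm{\varepsilon(\mathbf{u})}_{L^p}^p-|\Omega|\big)\le \duavg{\mathbf{f},\mathbf{u}}\le \norm{\mathbf{f}}_{\mathbf{W}^{-1,p'}}\norm{\mathbf{u}}_{\mathbf{V}^p_{div}},
\]
so that $\norm{\mathbf{u}}_{\mathbf{V}^p_{div}}$ is bounded by the data. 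Testing \eqref{temperature} with $\phi=\vartheta$, the term $\int_\Omega u_j\vartheta\partial_{x_j}\vartheta$ again vanishes, leaving $\kappa\norm{\nabla\vartheta}^2=\duavg{g,\vartheta}+\int_\Omega u_j\Theta_0\partial_{x_j}\vartheta$. The coupling term is where the restriction $p>2d/(d+2)$ enters: exactly this condition yields $\mathbf{V}^p_{div}\hookrightarrow\mathbf{L}^q$ for some $q>2$, whence $\mathbf{u}\,\Theta_0\in\mathbf{L}^2$ and the term is absorbed by $\tfrac{\kappa}{2}\norm{\nabla\vartheta}^2$ plus data, closing the bound on $\norm{\vartheta}_{V_0}$.

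Next I would set up the Galerkin approximation on finite-dimensional subspaces of $\mathbf{V}^p_{div}$ (generated, e.g., by the eigenfunctions of the Stokes operator $\mathbf{A}$) and of $V_0$. The finite-dimensional nonlinear system is solvable by the standard corollary of Brouwer's theorem, the sign condition on large spheres being furnished precisely by the coercivity just established. The same estimates, uniform in the discretization, yield weak limits $\mathbf{u}_n\rightharpoonup\mathbf{u}$ in $\mathbf{V}^p_{div}$ and $\vartheta_n\rightharpoonup\vartheta$ in $V_0$, and --- again by $p>2d/(d+2)$ --- strong convergence $\mathbf{u}_n\to\mathbf{u}$ in $\mathbf{L}^s$ for some $s>2$ and $\vartheta_n\to\vartheta$ in $L^2$, with a.e.\ convergence along a subsequence. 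The convective terms and the source $\mathbf{u}\cdot\nabla\Theta_0$ then pass to the limit by these strong convergences, and since $\nu\in W^{1,\infty}$ is Lipschitz, $\nu(\vartheta_n+\Theta_0)\to\nu(\vartheta+\Theta_0)$ a.e.\ and, being uniformly bounded, in every $L^r(\Omega)$, $r<\infty$.

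The main obstacle is the passage to the limit in the nonlinear stress term $\int_\Omega\nu(\vartheta_n+\Theta_0)\,\tau(x,\varepsilon(\mathbf{u}_n))\cdot\varepsilon(\mathbf{v})$, where $\varepsilon(\mathbf{u}_n)$ converges only weakly. I would resolve it by a Minty--Browder argument adapted to the variable weight. From the growth bound in (H3), the fields $\tau(x,\varepsilon(\mathbf{u}_n))$ are bounded in $\mathbf{L}^{p'}$ and so converge weakly to some $\boldsymbol{\chi}$; the energy identity, whose right-hand side passes to the limit, identifies $\lim_n\int_\Omega\nu(\vartheta_n+\Theta_0)\,\tau(x,\varepsilon(\mathbf{u}_n))\cdot\varepsilon(\mathbf{u}_n)$. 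Combining the monotonicity inequality from (H4), weighted by the positive factor $\nu(\vartheta_n+\Theta_0)$, with the strong convergence of this factor, one obtains $\int_\Omega\nu(\vartheta+\Theta_0)\,(\boldsymbol{\chi}-\tau(x,\varepsilon(\mathbf{v})))\cdot(\varepsilon(\mathbf{u})-\varepsilon(\mathbf{v}))\ge0$ for all admissible $\mathbf{v}$, and the usual Minty device (choosing $\mathbf{v}=\mathbf{u}\pm\lambda\mathbf{w}$ and letting $\lambda\to0$) yields $\boldsymbol{\chi}=\tau(x,\varepsilon(\mathbf{u}))$ a.e. The delicate point throughout is handling the product of the merely weakly convergent $\tau(x,\varepsilon(\mathbf{u}_n))$ with the strongly (but not uniformly) convergent weight $\nu(\vartheta_n+\Theta_0)$; a density argument from the Galerkin bases then extends \eqref{fluid}--\eqref{temperature} to all test functions, completing the proof.
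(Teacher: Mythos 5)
Your overall strategy is sound and your limit passage is essentially the paper's: uniform bounds, the compact embedding $\textbf{V}^p_{div}\hookrightarrow \textbf{L}^2(\Omega)$ coming from $p>2d/(d+2)$, strong convergence of the weight $\nu(\vartheta_n+\Theta_0)$, and Minty's trick applied to the weighted monotone operator. The paper, however, does not use a Galerkin scheme: it approximates by adding the regularizing term $\sigma\vert\varepsilon(\mathbf{u})\vert^{r-2}\varepsilon(\mathbf{u})$ with $\sigma=1/n$, solves each regularized problem by \emph{decoupling} (freeze $\mathbf{u}^\#$, solve the linear heat equation for $\vartheta^\#$, solve the regularized momentum equation, and apply Schaefer's fixed point theorem in $\textbf{L}^4_{div}$), and then lets $\sigma\to 0$. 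This is the route you mention only as an ``alternative''.

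The route you actually chose has a genuine gap at the solvability step. For the monolithic finite-dimensional system, the ``standard corollary of Brouwer's theorem'' requires the sign condition $\langle P(c),c\rangle\geq 0$ on a large sphere of the \emph{product} space. The coercivity you established is only sequential: the momentum test gives $\tau_1\nu_1\Vert\varepsilon(\mathbf{u})\Vert_{\textbf{L}^p}^p$ (order $p<2$ in $\mathbf{u}$), while the coupling term $\int_\Omega u_j\Theta_0\partial_{x_j}\vartheta$ in the temperature equation is bilinear in $(\mathbf{u},\vartheta)$; after Young's inequality it leaves a term of order $\Vert\mathbf{u}\Vert^2$ that cannot be absorbed by $\Vert\varepsilon(\mathbf{u})\Vert_{\textbf{L}^p}^p$ when $p<2$. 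On the part of the sphere where $\Vert\mathbf{u}\Vert$ and $\Vert\nabla\vartheta\Vert$ are comparable, $\langle P(c),c\rangle$ can become negative unless the data are small. This is precisely why one decouples: in the Schaefer (or Leray--Schauder) scheme the homotopy estimate for $\mathbf{w}=\lambda\mathcal{F}(\mathbf{w})$ involves only the velocity, for which the energy estimate closes by itself, and $\vartheta$ is then bounded \emph{a posteriori} in terms of $\mathbf{u}$. Your argument can be repaired by adopting the decoupled fixed point, or by a degree argument on a product of balls with radii $R_1$ and $R_2(R_1)$, but not by the sign condition as stated. A second, smaller gap: the claim ``$\mathbf{u}\,\Theta_0\in\textbf{L}^2$'' does not follow from $\textbf{V}^p_{div}\hookrightarrow\textbf{L}^q$, $q>2$, and $\Theta_0\in V$ alone when $p$ is close to $2d/(d+2)$; the paper first assumes $\theta\in H^{3/2}(\partial\Omega)$ so that $\Theta_0\in H^2(\Omega)\hookrightarrow C^0(\overline\Omega)$, closes the estimate, and only afterwards relaxes to $\Theta_0\in V$ by restricting the admissible test functions. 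You should incorporate the same two-step reduction.
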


\begin{oss}
\label{pressure}
The pressure $\pi$ (up to a constant) can be recovered from \eqref{fluid} by means of a suitable version of de Rham's Theorem (see, e.g., Thm. 2.2.10 in Ref. \refcite{Br}).
More precisely, we can find a unique {$\pi\in L^{s}_{(0)}(\Omega)$, where $s=\min\{\frac{dp}{2(d-p)},p'\}$}, such that
\begin{equation*}
\int_\Omega \nu(\vartheta + \Theta_0) \tau_{ij}(x,\varepsilon(\mathbf{u}))\varepsilon_{ij}(\mathbf{v})dx
-\int_\Omega u_j u_i \partial_{x_j} v_i dx = \langle \mathbf{f}, \mathbf{v} \rangle +\int_\Omega \pi \, \mbox{\rm div }\mathbf{v} dx
\end{equation*}
for all $\mathbf{v}\in \textbf{C}^\infty_c(\Omega)$. {Notice that the fact that we need $\pi\in L^s_{(0)}(\Omega)$, with $s\leq p'$ is due to the convective term. Indeed, one obtains that $\mathbf{u}\otimes \mathbf{u}\in \textbf{L}^{\frac{dp}{2(d-p)}}(\Omega)$. Therefore, in the Stokes problem we get $\pi\in L^{p'}_{(0)}(\Omega)$.}
\end{oss}
\begin{oss}
Observe that Theorem \ref{exist} is still valid if we neglect the convective term $(\textbf{u}\cdot\nabla)\textbf{u}$, i.e., if we consider the Stokes problem. {In particular, in this case, the range of $p$ for which Theorem \ref{exist} holds for any $p\in(1,2)$. Indeed, it suffices to use the compact embedding $\textbf{V}^p_{div}\hookrightarrow \textbf{L}^{q}(\Omega)$, for any $q<\frac{dp}{d-p}$, to handle the convective term in the equation for the temperature (see \eqref{convu}-\eqref{conv1b} below for further comments on this issue). As a consequence, the weak formulation \eqref{fluid} holds, by density, for any $\mathbf{v}\in \textbf{V}^p_{div}$.} 
\label{existence1}
\end{oss}
We can then establish some regularity properties of a weak solution provided that the stress-strain relationship 
has the form
\begin{equation}
\tau(x,\textbf{B})=\tau(\textbf{B})=(1+\vert\textbf{B}\vert)^{p-2}\textbf{B}   
\label{Carreau}   
\end{equation}
and the data are more regular. Actually, we will also consider the so-called Carreau law  (see Section \ref{numerical})
\begin{equation}
\tau(x,\textbf{B})=\tau(\textbf{B})=(1+\vert\textbf{B}\vert^2)^{\frac{p-2}{2}}\textbf{B}.    
\label{Carreau2}   
\end{equation}
For an extensive analysis of power-law fluids, the reader is referred, for instance, to Example 1.73 in Ref. \refcite{MNRR}.  
We need to distinguish the original Navier-Stokes problem from the Stokes case, namely, when the convective term $(\textbf{u}\cdot\nabla)\textbf{u}$ is neglected. 

In the case $d=3$, we suppose
\begin{equation}
    \delta=\frac{6p-9}{3-p}>0,
\label{delta}
\end{equation}
and assume
\begin{align}
\textbf{f}\in \textbf{L}^{p^\prime}(\Omega),\qquad g\in {L}^{3+\delta}(\Omega),\qquad \theta\in {W}^{2-\frac{1}{3+\delta}, 3+\delta}(\partial\Omega)
\label{extra}
\end{align}
or, possibly, the stronger ones 
\begin{align}
\textbf{f}\in \textbf{L}^{p^\prime}(\Omega),\qquad g\in W^{1,3+\delta}(\Omega),\qquad \theta\in W^{3-\frac{1}{3+\delta}, 3+\delta}(\partial\Omega).
\label{extra2}
\end{align}
{Moreover, we fix the following quantities:
\begin{align}
\overline{q}:=4p-2,\quad l:=\frac{4p-2}{p+1}.
\label{qbar}
\end{align}
}
Then we can prove the following
\begin{thm}
	\label{existregol} Let $d=3$ and let assumptions (H1)-(H2), \eqref{Carreau}, \eqref{delta}, and \eqref{extra} hold. If $p\in (p_0,2)$, with $p_0=20/11$, then there exists a weak solution in the sense of Definition \ref{weaksol} which enjoys the following additional regularity:
	\begin{itemize}
		\item $\textbf{u}\in \textbf{W}^{1,\overline{q}}(\Omega)\cap \textbf{W}^{2,l}(\Omega),\qquad \nabla\pi\in \textbf{L}^l(\Omega),$ \\ 
		\item $\Theta=\vartheta+\Theta_0\in W^{2,3+\delta}(\Omega)$,
	\end{itemize}
where $\overline{q}$ and $l$ are defined in \eqref{qbar}.
For the Stokes problem the same regularity holds for any {$p\in(3/2,2)$}. 
Also, in both cases, if extra-assumptions \eqref{extra2} hold then we have
	\begin{itemize}
	\item $\Theta=\vartheta+\Theta_0\in W^{3,3+\delta}(\Omega)$. 
\end{itemize}
\end{thm}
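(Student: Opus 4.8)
The overall strategy is a bootstrap that \emph{decouples} the two regularity mechanisms, built around the identity $3+\delta=3p/(3-p)=:p^*$, so that the target space $W^{2,3+\delta}$ for the temperature is exactly $W^{2,p^*}$, the second-order space attached to the Sobolev exponent of $\textbf{V}^p_{div}$. All estimates would first be derived on the regularised problem of Section \ref{sec:approx}, whose solutions are smooth enough to justify the difference-quotient computations, and then transferred to a weak solution in the sense of Definition \ref{weaksol} by passing to the limit, the nonlinear stress being identified through the strict monotonicity (H4). As a starting point, the a priori bound underlying Theorem \ref{exist} gives $\textbf{u}\in\textbf{V}^p_{div}\hookrightarrow\textbf{L}^{p^*}$ and $\vartheta\in V_0$; since $p>3/2$ forces $p^*>3$, the drift $\textbf{u}$ is of supercritical integrability, while the lift $\Theta_0$ of $\theta\in W^{2-1/(3+\delta),3+\delta}(\partial\Omega)$ satisfies $\Theta_0\in W^{2,p^*}\hookrightarrow W^{1,\infty}$ by elliptic regularity.

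First I would settle the temperature, reading \eqref{Sy06} as the linear advection--diffusion equation $-\kappa\Delta\vartheta+\textbf{u}\cdot\nabla\vartheta=g-\textbf{u}\cdot\nabla\Theta_0$ with divergence-free drift $\textbf{u}\in\textbf{L}^{p^*}$, and bootstrapping the integrability of $\nabla\vartheta$. From $\nabla\vartheta\in\textbf{L}^{r_k}$ (with $r_0=2$), H\"older gives $\textbf{u}\cdot\nabla\vartheta\in L^{s_k}$, $1/s_k=1/p^*+1/r_k$; since $g\in L^{p^*}$ and $\textbf{u}\cdot\nabla\Theta_0\in L^{p^*}$, Calder\'on--Zygmund theory for $-\Delta$ on the smooth domain yields $\vartheta\in W^{2,s_k}\cap W^{1,s_k}_0$, hence $\nabla\vartheta\in\textbf{L}^{r_{k+1}}$ with $1/r_{k+1}=1/s_k-1/3=1/r_k-(1/3-1/p^*)$. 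As $1/3-1/p^*>0$, the index $1/r_k$ strictly decreases, so after finitely many steps the source lies in $L^{p^*}$ and $\vartheta\in W^{2,p^*}=W^{2,3+\delta}$. Adding $\Theta_0$ gives $\Theta\in W^{2,3+\delta}\hookrightarrow W^{1,\infty}$, whence by (H1) the coefficient satisfies $\nu(\Theta)\in W^{1,\infty}$, $\nu(\Theta)\ge\nu_1>0$ and $\nabla\nu(\Theta)=\nu'(\Theta)\nabla\Theta\in\textbf{L}^\infty$. Notably, this uses only the \emph{base} bound $\textbf{u}\in\textbf{L}^{p^*}$, so no circularity with the velocity estimate arises.

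With $\nu(\Theta)\in W^{1,\infty}$ now a fixed, uniformly elliptic and Lipschitz coefficient, \eqref{Sy04} becomes a $p$-Stokes (resp. $p$-Navier--Stokes) system, and I would apply the second-difference method of \cite{Malek,Beirao2}. Testing the regularised momentum balance with $\Delta_{-h}\Delta_h\textbf{u}$ and exploiting the $(1+|\varepsilon(\textbf{u})|)^{p-2}$-ellipticity of the linearised stress produces the natural estimate $\nabla\textbf{V}\in\textbf{L}^2$, where $\textbf{V}=(1+|\varepsilon(\textbf{u})|)^{(p-2)/2}\varepsilon(\textbf{u})$; the additional terms created by differencing $\nu(\Theta)$ are absorbed precisely thanks to $\nabla\nu(\Theta)\in\textbf{L}^\infty$, which is exactly where $\delta>0$ enters. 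Interpolating $\nabla\textbf{V}\in\textbf{L}^2$ with the energy bound $\varepsilon(\textbf{u})\in\textbf{L}^p$ gives $\textbf{u}\in\textbf{W}^{1,\overline q}$ with $\overline q=4p-2$, and the pointwise relation $|\nabla^2\textbf{u}|\lesssim(1+|\varepsilon(\textbf{u})|)^{(2-p)/2}|\nabla\textbf{V}|$ together with H\"older then yields $\textbf{u}\in\textbf{W}^{2,l}$, $l=(4p-2)/(p+1)$, after which $\nabla\pi\in\textbf{L}^l$ is recovered as in Remark \ref{pressure}. In the Stokes case the convective term is absent and $p>3/2$ suffices; in the full problem $(\textbf{u}\cdot\nabla)\textbf{u}$ must be absorbed into $\nabla\textbf{V}\in\textbf{L}^2$, and it is precisely this absorption that forces the threshold $p>p_0=20/11$.

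Finally, under the stronger data \eqref{extra2} one more bootstrap gives $\Theta\in W^{3,3+\delta}$: since $\overline q=4p-2>3$ we now have $\textbf{u}\in\textbf{L}^\infty$, while $\nabla\Theta\in\textbf{L}^\infty$ and $\nabla^2\Theta\in\textbf{L}^{p^*}$ from the previous step, so differentiating \eqref{Sy06} the source $g-\textbf{u}\cdot\nabla\Theta$ lies in $W^{1,p^*}$ (using $g\in W^{1,3+\delta}$, $\textbf{u}\cdot\nabla^2\Theta\in\textbf{L}^{p^*}$, and $\nabla\textbf{u}\cdot\nabla\Theta\in\textbf{L}^{p^*}$ because $\overline q\ge p^*$ on the admissible range); with $\Theta_0\in W^{3,3+\delta}$ the claim follows by elliptic regularity. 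I expect the genuine difficulty to be the velocity step: extending the up-to-the-boundary second-difference estimates of \cite{Malek,Beirao2} to a temperature-dependent viscosity while pinning down the sharp exponents $\overline q$ and $l$, and simultaneously absorbing both the $\nabla\nu(\Theta)$ terms and the convective term---the latter being what ultimately fixes $p_0=20/11$. The coupling is only manageable because the temperature step secures $\nu(\Theta)\in W^{1,\infty}$ independently of any velocity regularity.
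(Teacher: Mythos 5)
Your proposal is correct in outline and lands on the same decoupling (temperature first, using only $\textbf{u}\in\textbf{V}^p_{div}\hookrightarrow\textbf{L}^{3+\delta}$, so that $\nu(\Theta)\in W^{1,\infty}$ is secured before any velocity regularity is invoked), but it diverges from the paper at both main steps. For the temperature, you run an iterative integrability bootstrap on $\nabla\vartheta$ via Calder\'on--Zygmund theory, whereas the paper gets $\vartheta\in W^{2,3+\delta}$ in a single stroke by estimating $\Vert\textbf{u}\cdot\nabla\vartheta\Vert_{L^{3+\delta}}\leq\Vert\textbf{u}\Vert_{\textbf{L}^{3+\delta}}\Vert\nabla\vartheta\Vert_{\textbf{L}^{\infty}}$ and absorbing the right-hand side through the Gagliardo--Nirenberg interpolation \eqref{Gagliardo} with Young's inequality; both routes work, yours being more elementary and the paper's shorter. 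The more substantial difference is the velocity step: you propose to redo the second-difference estimates of \cite{Malek,Beirao2} directly for the variable coefficient $\nu(\Theta)$, absorbing the commutator terms via $\nabla\nu(\Theta)\in\textbf{L}^\infty$, and you candidly flag this as the genuine difficulty. The paper instead avoids this entirely (Theorems \ref{THM1}--\ref{THM2}, following \cite[Lemma 4]{Abels}): it tests the weak formulation with $\textbf{w}=\textbf{v}/\nu(\phi)-B[\mathrm{div}(\textbf{v}/\nu(\phi))]$, where $B$ is the Bogovskii operator, thereby rewriting the variable-viscosity problem as the \emph{constant}-viscosity $p$-Stokes (resp.\ Navier--Stokes) system with a new forcing $\textbf{F}\in\textbf{L}^{p'}(\Omega)$ controlled by $\Vert\textbf{f}\Vert_{\textbf{L}^{p'}(\Omega)}+\Vert\nabla\phi\Vert_{\textbf{L}^\infty(\Omega)}$, and then cites \cite{Beirao1,Beirao2} as a black box for the exponents $\overline q=4p-2$, $l=(4p-2)/(p+1)$ and the threshold $p_0=20/11$. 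What the paper's route buys is that the delicate up-to-the-boundary difference-quotient analysis never has to be repeated with a temperature-dependent coefficient; what your route would buy, if carried out in full, is independence from that reduction, but as written it leaves precisely the hardest estimates (the boundary terms, the sharp exponents, and the absorption of the convective term fixing $p_0$) at the level of a plausible sketch. The final $W^{3,3+\delta}$ bootstrap matches the paper's argument essentially verbatim, including the check $\overline q\geq 3+\delta$ on the admissible range of $p$.
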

\begin{oss}
{
Concerning the minimal exponent $p_0=20/11$, this result coincides with the one proposed in Ref. \refcite{Beirao2}, Theorem 2.3. Indeed, here we adopt the same technique which consists in finding a result for the Stokes problem first (exploiting Ref. \refcite{Beirao2}, Theorem 2.2), then extending it to the Navier-Stokes problem by looking at the additional convective term as a perturbation of the Stokes problem. As far as we know this value of $p_0$ is the best result in literature concerning the regularity of stationary Navier-Stokes problem in the shear thinning setting. Indeed, other results (for flat domains) can be found in Ref. \refcite{Beirao3}, where the lower bound is $p_1=15/8>p_0$ and in Ref. \refcite{Berselli}, where the minimal exponent is improved, reaching $p_1=\frac{7+\sqrt{35}}{7}$, which is still greater than $p_0$. Moreover, we point out that Ref. \refcite{BerselliRou}, Theorem 2.29, allows to improve the lower bound $p=3/2$ for the Stokes problem given in 
Ref. \refcite{Beirao2}, Theorem 2.2. Indeed, it is shown that the same results hold for the Stokes problem for any $p\in(1,2)$. Nevertheless we still need $p\in(3/2,2)$ due to the coupling with the temperature.} \label{validity} 
\end{oss}
We also consider the two-dimensional case, since the numerical experiments will be carried on in this setting. We assume 
\begin{equation}
\delta=\frac{4(p-1)}{2-p}>0
\label{a1}
\end{equation}
and  
\begin{align}
\textbf{f}\in \textbf{L}^{p^\prime}(\Omega),\qquad g\in L^{2+\delta}(\Omega),\qquad \theta\in W^{2-\frac{1}{2+\delta}, 2+\delta}(\partial\Omega)
\label{extraB1}
\end{align}
or, possibly, the stronger ones 
{
\begin{align}
\textbf{f}\in \textbf{L}^{p^\prime}(\Omega),\quad g\in W^{1,\overline{q}}(\Omega),\quad \theta\in W^{2-\frac{1}{2+\delta}, 2+\delta}(\partial\Omega)\cap W^{3-\frac{1}{\overline{q}},\overline{q}}(\partial\Omega),
\label{extraB2}
\end{align}
where $\overline{q}$ is defined in \eqref{qbar}.

Then the following result holds
\begin{thm}
	\label{existregol2} Let $d=2$ and let assumptions (H1)-(H2), \eqref{Carreau} (or \eqref{Carreau2}), \eqref{a1}, and \eqref{extraB1} hold. If ${p\in (1,2)}$, then there exists a weak solution to the Stokes problem, in the sense of Definition \ref{weaksol}, which enjoys the following additional regularity:
{	\begin{itemize}
		\item $\textbf{u}\in \textbf{W}^{1,\overline{q}}(\Omega)\cap\textbf{W}^{2,l}(\Omega),\qquad \pi\in W^{1,l}(\Omega),$
	\end{itemize}
	where $\overline{q}$ and $l$ are defined in \eqref{qbar}. %In the convective case (Navier-Stokes problem), considering assumption \eqref{Carreau} instead of \eqref{Carreau2}, if we assume $p\in[3/2,2)$ and that the parameters and the data (e.g., $\textbf{f}, g$) are sufficiently small, we get 

%	\begin{itemize}
%		\item $\textbf{u}\in C^{1,\gamma}(\overline{\Omega}),\qquad \pi\in C^{0,\gamma}(\overline{\Omega}),\qquad$ for any $\gamma<\gamma_0$,
%	\end{itemize}
%	with $\gamma_0=1-\frac{n}{p^\prime}$.
	We also obtain 
	\begin{itemize}
	 \item $\Theta=\vartheta+\Theta_0\in W^{2,2+\delta}(\Omega)$. 
	\end{itemize}
Moreover, if assumptions \eqref{extraB2} hold then we have
	\begin{itemize}
	\item {$\Theta=\vartheta+\Theta_0\in W^{3,\overline{q}}(\Omega)$. }
\end{itemize}
}
\end{thm}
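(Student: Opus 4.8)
The plan is to read off the stated regularity from uniform-in-$\epsilon$ a priori estimates on the regularized solutions $(\mathbf{u}_\epsilon,\vartheta_\epsilon)$ built in Section \ref{sec:approx}, and then pass to the limit; the mere existence of a weak solution to the Stokes problem is already guaranteed by Theorem \ref{exist} together with Remark \ref{existence1}, since in $d=2$ one has $2d/(d+2)=1$ and the Carreau law \eqref{Carreau2} verifies (H2)--(H4). As a preliminary step I would fix the Dirichlet lift: the boundary datum in \eqref{extraB1} gives $\theta\in W^{2-1/(2+\delta),2+\delta}(\partial\Omega)$, so elliptic regularity for the harmonic extension yields $\Theta_0\in W^{2,2+\delta}(\Omega)\hookrightarrow W^{1,\infty}(\Omega)$ by the planar embedding $W^{1,2+\delta}\hookrightarrow L^\infty$. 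This lets the forcing $\mathbf{u}\cdot\nabla\Theta_0$ in \eqref{Sy06} be treated as a lower-order perturbation.

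For the velocity I would adapt the two-dimensional regularity theory for the $p$-Stokes system of \cite{Malek} to the temperature-dependent viscosity. On the regularized level I differentiate the momentum balance and test with second-order difference quotients (tangential ones near $\partial\Omega$), using that the Carreau tensor \eqref{Carreau2} satisfies the quantitative ellipticity $\partial\tau(\mathbf{B})\mathbf{C}\cdot\mathbf{C}\gtrsim(1+|\mathbf{B}|^2)^{(p-2)/2}|\mathbf{C}|^2$ and the matching growth $|\partial\tau(\mathbf{B})|\lesssim(1+|\mathbf{B}|^2)^{(p-2)/2}$. This produces the natural estimate bounding $\nabla\mathbf{F}(\varepsilon(\mathbf{u}_\epsilon))$ in $L^2$ uniformly in $\epsilon$, where $\mathbf{F}(\mathbf{B})=(1+|\mathbf{B}|^2)^{(p-2)/4}\mathbf{B}$, hence arbitrarily high integrability of $\varepsilon(\mathbf{u}_\epsilon)$ through $W^{1,2}\hookrightarrow L^r$ ($r<\infty$, a planar feature). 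The temperature dependence of $\nu$ contributes the additional commutator $\nu'(\vartheta_\epsilon+\Theta_0)\,\nabla(\vartheta_\epsilon+\Theta_0)\cdot\tau(\varepsilon(\mathbf{u}_\epsilon))$ upon differentiation; by (H1) and the growth of $\tau$ this is absorbed provided $\nabla(\vartheta_\epsilon+\Theta_0)$ lies in a sufficiently high $L^s$, which is exactly where the threshold $p>3/2$ is used. A reverse-H\"older (higher-integrability) argument then self-improves the natural estimate to $\mathbf{u}_\epsilon\in\mathbf{W}^{2,q}$ for some $q>2$, uniformly in $\epsilon$, so in particular $\mathbf{u}_\epsilon\in\mathbf{C}^{0,\alpha}\cap\mathbf{W}^{1,\infty}$.

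For the temperature I would close the loop by Calder\'on--Zygmund regularity for \eqref{Sy06}. Using $\mbox{div }\mathbf{u}_\epsilon=0$ I rewrite the convective term in divergence form $\mathbf{u}_\epsilon\cdot\nabla\vartheta_\epsilon=\mbox{div}(\vartheta_\epsilon\mathbf{u}_\epsilon)$ and bootstrap: starting from $\vartheta_\epsilon\in V_0$ and the velocity integrability I first gain an intermediate $W^{1,s}$ bound, and once $\mathbf{u}_\epsilon\in\mathbf{W}^{2,q}\hookrightarrow\mathbf{W}^{1,\infty}$ is available the right-hand side $g-\mathbf{u}_\epsilon\cdot\nabla\vartheta_\epsilon-\mathbf{u}_\epsilon\cdot\nabla\Theta_0$ lies in $L^{2+\delta}$ by \eqref{extraB1}, giving $\vartheta_\epsilon\in W^{2,2+\delta}$. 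The exponent $\delta=4(p-1)/(2-p)$ from \eqref{a1} is precisely calibrated so that the integrability furnished by the fluid step matches the one demanded by this temperature step, so the bootstrap closes after finitely many iterations; moreover $\vartheta+\Theta_0\in W^{2,2+\delta}\hookrightarrow W^{1,\infty}$ retroactively justifies the control of the viscosity commutator used above.

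Finally, the pressure is recovered as in Remark \ref{pressure} from $\nabla\pi=\mathbf{f}+\mbox{div}(\nu(\vartheta+\Theta_0)\tau(\varepsilon(\mathbf{u})))$: since $\mathbf{u}\in\mathbf{W}^{2,q}$ and $\nu(\vartheta+\Theta_0)\in W^{1,\infty}$, the divergence of the stress lies in $\mathbf{L}^q$, whence $\pi\in W^{1,q}$. Under the stronger data \eqref{extraB2}, differentiating \eqref{Sy06} once more and invoking $W^{3,2+\delta}$ elliptic regularity yields $\vartheta+\Theta_0\in W^{3,2+\delta}$, the new right-hand side $\nabla(\mathbf{u}\cdot\nabla\vartheta)$ being estimated in $W^{1,2+\delta}$ through planar product inequalities from the already-established $\mathbf{W}^{2,q}$ and $W^{2,2+\delta}$ bounds. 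I expect the principal obstacle to be the \emph{simultaneous} closure of the two bootstraps: the fluid estimate needs $\nabla\vartheta$ in a high $L^s$, while the temperature estimate needs $\mathbf{u}$ essentially Lipschitz, and only the bound $p>3/2$ together with the tuned $\delta$ renders the two chains of exponents compatible; checking this compatibility, and handling the boundary contributions in the second-order velocity estimate, is the delicate part.
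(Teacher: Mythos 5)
Your overall architecture is in the right ballpark (existence from Theorem \ref{exist}, elliptic regularity for the temperature, Carreau monotonicity/growth for the fluid, pressure recovered from the momentum balance), but the decisive step is missing: you never break the circular dependence between the two equations, and you explicitly leave it as ``the delicate part''. In your plan the fluid estimate needs $\nabla\Theta$ in a high $L^s$ to absorb the commutator $\nu'(\Theta)\nabla\Theta\cdot\tau(\varepsilon(\mathbf{u}))$, while your temperature estimate bounds the convective term as $\Vert\mathbf{u}\Vert_{\textbf{L}^\infty}\Vert\nabla\vartheta\Vert_{L^{2+\delta}}$ and hence needs $\mathbf{u}$ essentially Lipschitz; at the starting point you only have $\mathbf{u}\in\textbf{W}^{1,p}$ and $\nabla\vartheta\in \textbf{L}^2$, so neither chain can start. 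The paper resolves this by running the temperature step \emph{first} and with the opposite H\"older splitting: $\Vert\mathbf{u}\cdot\nabla\vartheta\Vert_{L^{2+\delta}}\le\Vert\mathbf{u}\Vert_{\textbf{L}^{2+\delta}}\Vert\nabla\vartheta\Vert_{\textbf{L}^\infty}$, where $\Vert\mathbf{u}\Vert_{\textbf{L}^{2+\delta}}$ is controlled by the bare energy bound via the embedding \eqref{embedding2} (this is exactly what calibrates $\delta$ in \eqref{a1}), and $\Vert\nabla\vartheta\Vert_{\textbf{L}^\infty}$ is interpolated by Gagliardo--Nirenberg \eqref{Gagliardo2} between $W^{1,2}$ and $W^{2,2+\delta}$ with exponent $1-\chi_2<1$ and then absorbed by Young into the left-hand side of the elliptic estimate \eqref{theta1}. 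This yields $\Theta\in W^{2,2+\delta}\hookrightarrow W^{1,\infty}$ using \emph{only} $\mathbf{u}\in\textbf{W}^{1,p}$, with no Lipschitz information on the velocity; only afterwards is the velocity regularity derived. Without this (or an equivalent device) your two bootstraps do not close.

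A secondary difference: for the velocity you propose to redo the difference-quotient/second-order estimates of \cite{Malek} directly with temperature-dependent viscosity. The paper instead avoids this entirely (Theorem \ref{THM1}): once $\Theta\in W^{1,\infty}$ is known, it tests with $\mathbf{w}=\mathbf{v}/\nu(\Theta)-B[\mathrm{div}(\mathbf{v}/\nu(\Theta))]$, $B$ the Bogovski\u{\i} operator, which converts the problem into the \emph{constant-viscosity} Carreau--Stokes system with a new right-hand side $\textbf{F}\in\textbf{L}^{p'}(\Omega)$, and then simply cites \cite[Thm.~5.30]{Malek} for $\textbf{u}\in\textbf{W}^{2,q}(\Omega)$, $\pi\in W^{1,q}(\Omega)$, $q>2$ (this is where $p>3/2$ enters, as a hypothesis of that theorem, not through absorption of the viscosity commutator). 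Your direct route could in principle be made to work, but only after the temperature step above has supplied $\nabla\Theta\in\textbf{L}^\infty$, and it would require reproving the boundary regularity rather than quoting it. The final bootstrap to $W^{3,2+\delta}$ under \eqref{extraB2} in your sketch matches the paper's.
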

\begin{oss}
{Differently from the case $d=3$, for the Stokes problem with $d=2$ we can assume $p\in(1,2)$, thanks to the results in Ref. \refcite{BerselliRou} and the regularity of $\vartheta$.}
\label{lowerbound}
\end{oss}
\begin{comment}
\begin{oss}
Observe that, by the two-dimensional embedding $\textbf{W}^{2,q}(\Omega)\hookrightarrow \textbf{W}^{1,\infty}(\Omega)$, $q>2$, in the Stokes case with assumption \eqref{Carreau} (or \eqref{Carreau2}), we obtain that $\textbf{u}\in \textbf{W}^{1,\infty}(\Omega)$.
\label{W1infty}
\end{oss}
\end{comment}
\begin{comment}
\begin{oss}
The assumption of smallness of data in the case of Navier-Stokes problem is given by the necessity of applying Thm. 2.1 and Thm. 7.1 of Ref. \refcite{Grisanti}. Indeed, it is essential in the proof to get weak-strong uniqueness to problem (1.1)-(1.2) of Ref. \refcite{Grisanti}, i.e., any weak solution, under some regularity hypotheses on the data and the parameters, is a strong solution and thus it is more regular. 
\end{oss}
\end{comment}
In conclusion, concerning the Stokes problem, i.e., neglecting the term $(\textbf{u}\cdot\nabla)\textbf{u}$, we can obtain a conditional uniqueness result for the weak solutions according to Definition \ref{weaksol}.
\begin{thm}
\label{uniquebis} Let assumptions (H1)-(H7) hold. For $d=2,3$, if we consider $\tau$ as in \eqref{Carreau2}, assuming that, for $d=2$, $\textbf{u}\in \textbf{W}^{1,q}(\Omega)$, with $q>p$ and $p\in(1,2)$, whereas, for $d=3$, $\textbf{u}\in \textbf{W}^{1,\frac{6p(p-1)}{5p-6}}(\Omega)$ and $p\in[3/2,2)$, if
\begin{align}
\frac{M_3}{2}>\frac{M_4}{\kappa^3} \left(\Vert g\Vert_{V^\prime} + {\Vert \Theta_0\Vert_{V}}\right)^2\left(1+\Vert\textbf{u}\Vert^{2p-2}_{\textbf{W}^{1,\alpha(p-1)}(\Omega)}\right),
    \label{cond}
\end{align}
with $\alpha=\frac{q}{p-1}$ for $d=2$, and $\alpha=\frac{6p}{5p-6}$ for $d=3$, then the weak solution to the Stokes problem according to Definition \ref{weaksol} is unique. Here $M_3$ and $M_4$ are positive constants, depending on $\Omega$, $\tau_1$, $\tau_2$, $\nu_1$ and $\Vert\mathbf{f}\Vert_{\textbf{W}^{-1,p^\prime}(\Omega)}$.
\end{thm}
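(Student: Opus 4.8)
The plan is to argue by contradiction on the difference of two solutions, using the strict monotonicity of the Carreau stress \eqref{Carreau2} together with the Lipschitz dependence of $\nu$ on the temperature, and to close the argument by means of the smallness condition \eqref{cond}. Let $(\mathbf{u}_1,\vartheta_1)$ and $(\mathbf{u}_2,\vartheta_2)$ be two weak solutions of the Stokes problem and set $\mathbf{w}=\mathbf{u}_1-\mathbf{u}_2$, $\eta=\vartheta_1-\vartheta_2$, $\Theta_i=\vartheta_i+\Theta_0$. Subtracting the two fluid identities \eqref{fluid} with the convective term omitted and taking $\mathbf{v}=\mathbf{w}\in\mathbf{V}^p_{div}$, I would split the integrand as
\[
\nu(\Theta_1)\tau(\varepsilon(\mathbf{u}_1))-\nu(\Theta_2)\tau(\varepsilon(\mathbf{u}_2))
=\nu(\Theta_1)\bigl(\tau(\varepsilon(\mathbf{u}_1))-\tau(\varepsilon(\mathbf{u}_2))\bigr)
+\bigl(\nu(\Theta_1)-\nu(\Theta_2)\bigr)\tau(\varepsilon(\mathbf{u}_2)).
\]
The first term is the monotone part, nonnegative once tested against $\varepsilon(\mathbf{w})$; the second term is the genuine obstruction to uniqueness, coming from the temperature dependence of the viscosity, and must be absorbed on the right-hand side.

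For the monotone part I would invoke the refined monotonicity of \eqref{Carreau2}: for $p\in(1,2)$ there is $c_p>0$ such that $\bigl(\tau(\mathbf{A})-\tau(\mathbf{B})\bigr)\cdot(\mathbf{A}-\mathbf{B})\ge c_p\,(1+|\mathbf{A}|^2+|\mathbf{B}|^2)^{(p-2)/2}|\mathbf{A}-\mathbf{B}|^2$. Since $\nu\ge\nu_1$ by (H1), the monotone term is bounded below by $\nu_1 c_p\int_\Omega G^{(p-2)/2}|\varepsilon(\mathbf{w})|^2\,dx$, where $G=1+|\varepsilon(\mathbf{u}_1)|^2+|\varepsilon(\mathbf{u}_2)|^2$. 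As this weight degenerates where the strain is large, it does not control $\|\varepsilon(\mathbf{w})\|_{\mathbf{L}^p}$ directly; writing $|\varepsilon(\mathbf{w})|^p=\bigl(G^{(p-2)/2}|\varepsilon(\mathbf{w})|^2\bigr)^{p/2}G^{(2-p)p/4}$ and applying Hölder with exponents $2/p$ and $2/(2-p)$ yields
\[
\|\varepsilon(\mathbf{w})\|_{\mathbf{L}^p}^2\le\Bigl(\int_\Omega G^{(p-2)/2}|\varepsilon(\mathbf{w})|^2\,dx\Bigr)\Bigl(\int_\Omega G^{p/2}\,dx\Bigr)^{(2-p)/p}.
\]
The last factor is finite and controlled by $\|\mathbf{u}_i\|_{\mathbf{V}^p_{div}}$, hence by $\|\mathbf{f}\|_{\mathbf{W}^{-1,p'}}$ through the energy estimate; this is precisely how the constant $M_3$ enters, and the monotone term is bounded below by $\nu_1 M_3\|\varepsilon(\mathbf{w})\|_{\mathbf{L}^p}^2$.

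For the coupling term I would use $|\nu(\Theta_1)-\nu(\Theta_2)|\le\|\nu'\|_{L^\infty}|\eta|$ from (H1) and the growth bound $|\tau(\varepsilon(\mathbf{u}_2))|\le\tau_2(1+|\varepsilon(\mathbf{u}_2)|)^{p-1}$ from (H3), followed by Hölder with the three exponents $p$, $\alpha$, $s$, where $(1+|\varepsilon(\mathbf{u}_2)|)^{p-1}\in L^\alpha$ exactly because $\mathbf{u}_2\in\mathbf{W}^{1,\alpha(p-1)}(\Omega)$ and $1/s=1/p'-1/\alpha$. With $\alpha=q/(p-1)$ for $d=2$ one has $s<\infty$, admissible since $V_0\hookrightarrow L^s$ for every finite $s$; with $\alpha=6p/(5p-6)$ for $d=3$ one finds $s=6$, the borderline exponent $V_0\hookrightarrow L^6$, and the threshold $p\ge3/2$ is what guarantees $5p-6>0$ and $s\le6$. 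Using $\|\eta\|_{L^s}\le C_S\|\nabla\eta\|$, this gives $\nu_1 M_3\|\varepsilon(\mathbf{w})\|_{\mathbf{L}^p}\le C\bigl(1+\|\mathbf{u}\|_{\mathbf{W}^{1,\alpha(p-1)}(\Omega)}\bigr)^{p-1}\|\nabla\eta\|$. It remains to bound $\|\nabla\eta\|$ by $\mathbf{w}$. Subtracting the two temperature identities \eqref{temperature}, decomposing $(u_1)_j\vartheta_1-(u_2)_j\vartheta_2=(u_1)_j\eta+w_j\vartheta_2$ and testing with $\phi=\eta$, the term $\int_\Omega(u_1)_j\eta\,\partial_{x_j}\eta$ vanishes because $\mathrm{div}\,\mathbf{u}_1=0$ and $\eta\in V_0$, leaving $\kappa\|\nabla\eta\|^2=\int_\Omega w_j(\Theta_0-\vartheta_2)\partial_{x_j}\eta$. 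Bounding this by Hölder together with Korn's and Sobolev's inequalities ($\|\mathbf{w}\|_{\mathbf{L}^a}\le C_K\|\varepsilon(\mathbf{w})\|_{\mathbf{L}^p}$) and the a priori estimate $\|\vartheta_2\|_V\le C\kappa^{-1}(\|g\|_{V'}+\|\Theta_0\|_V)$ (from testing \eqref{temperature} with $\vartheta_2$, again killing the transport term) gives $\|\nabla\eta\|\le C\kappa^{-1}(\|g\|_{V'}+\|\Theta_0\|_V)\|\varepsilon(\mathbf{w})\|_{\mathbf{L}^p}$. Inserting this into the previous display and invoking the energy bounds for $\mathbf{u}_i$ produces $\nu_1 M_3\|\varepsilon(\mathbf{w})\|_{\mathbf{L}^p}\le K\|\varepsilon(\mathbf{w})\|_{\mathbf{L}^p}$, where, after a Young inequality is used to match the quadratic structure (this is the origin of the squared data term and of $\|\mathbf{u}\|^{2p-2}$), $K$ is controlled by the right-hand side of \eqref{cond}. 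Hence \eqref{cond} forces $\nu_1 M_3>K$, so $\|\varepsilon(\mathbf{w})\|_{\mathbf{L}^p}=0$; Korn's inequality gives $\mathbf{w}=0$, and then $\kappa\|\nabla\eta\|^2=0$ yields $\eta=0$.

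I expect the main difficulty to be the degenerate monotonicity step: for $p<2$ the Carreau stress is only Hölder-monotone with a weight that vanishes where the strain is large, so the coercive control of $\|\varepsilon(\mathbf{w})\|_{\mathbf{L}^p}$ must be recovered through the interpolation above rather than directly. This is exactly the point where the additional integrability $\mathbf{u}\in\mathbf{W}^{1,\alpha(p-1)}(\Omega)$ and, in three dimensions, the restriction $p\ge3/2$ are needed, in order to keep the three Hölder exponents and the attendant Sobolev embedding $V_0\hookrightarrow L^s$ admissible; tracking how the data norms and the powers of $\kappa$ propagate through the two chained estimates is then the routine, if delicate, bookkeeping encoded in \eqref{cond}.
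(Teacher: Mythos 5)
Your argument is correct and follows essentially the same route as the paper's proof: the same splitting of the viscous term into a monotone part and a viscosity-difference part, the same use of the quantified Carreau monotonicity (Lemma \ref{lemma:Carreau}) combined with the reverse H\"older (negative-exponent) interpolation to recover coercivity in $\Vert\varepsilon(\mathbf{w})\Vert_{\textbf{L}^p(\Omega)}$, the same three-exponent H\"older estimate with $\alpha$ and $\beta=s$ for the coupling term, and the same chained temperature estimate yielding $\Vert\nabla\tilde\vartheta\Vert\lesssim\kappa^{-1}(\Vert g\Vert_{V'}+\Vert\Theta_0\Vert_V)\Vert\varepsilon(\mathbf{w})\Vert_{\textbf{L}^p(\Omega)}$. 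The bookkeeping you describe reproduces condition \eqref{cond} exactly as in the paper.
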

\begin{oss}
{Compared with the result of Ref. \refcite{uniq}, Corollary 2.2, here we need more a priori regularity for $\mathbf{u}$. As it will be clear in the proof, this is due to the dependence of the viscosity $\nu$ on $\vartheta$ (see \eqref{term1bis} below).}
\end{oss}
In Sections \ref{pp1}-\ref{pp2} we give the proofs of the above results. In particular, Theorem \ref{exist} is proven through an approximating problem which is analyzed in the next section. This problem will be useful in Section \ref{numerical} as well. Then, the proof of Theorem \ref{uniquebis} will be given in Section \ref{uniqueness}, together with the proof of some conditional uniqueness results concerning the approximating problem.
\section{The approximating problem}%\setcounter{equation}{0}
\label{sec:approx}
Fix $\sigma>0$, $r\in [2, \infty)$ and $p\in(1,2)$. Then consider the problem of finding $(\mathbf{u},\pi,\vartheta)$ such that
\begin{align}
-&\mbox{div}\left[\nu(\vartheta + \Theta_0) \left(\mathbf{\tau}(x,\mathbf{\varepsilon}(\mathbf{u}))
+ \sigma \vert \varepsilon(\mathbf{u}) \vert^{r-2} \varepsilon(\mathbf{u})\right)\right]
+(\mathbf{u}\cdot\nabla)\mathbf{u}+\nabla\pi= \mathbf{f} \qquad \hbox{in }\Omega
\label{Sy08}\\
&\mbox{div }\mathbf{u}=0\qquad \hbox{in }\Omega \label{Sy09}\\
-&\kappa\Delta \vartheta + \mathbf{u} \cdot \nabla\vartheta = g - \mathbf{u} \cdot \nabla\Theta_0 \qquad \hbox{in }\Omega\label{Sy10}\\
&\mathbf{u} = \mathbf{0}, \quad \vartheta = 0 \qquad \hbox{on }\partial \Omega.
\label{Sy11}
\end{align}

In this case the definition of weak solution reads

\begin{defn}
\label{weaksolreg}
A pair $(\mathbf{u},\vartheta)\in \textbf{V}^r_{div} \times V_0$ is a weak solution if
\begin{align}
&\int_\Omega \left[\nu(\vartheta + \Theta_0)\left( \tau_{ij}(x,\varepsilon(\mathbf{u})) +\sigma \vert \varepsilon(\mathbf{u}))\vert^{r-2} \varepsilon_{ij}(\mathbf{u})\right)\right]\varepsilon_{ij}(\mathbf{v})dx \nonumber\\
&-\int_\Omega u_j u_i \partial_{x_j} v_i dx = \langle \mathbf{f}, \mathbf{v} \rangle \qquad \forall\, \mathbf{v}\in \textbf{V}^r_{div} \label{fluidapp}\\
&\int_\Omega \kappa \nabla \vartheta \cdot \nabla \phi + \int_\Omega u_j\vartheta \partial_{x_j}\phi = \langle g,\phi \rangle
+ \int_\Omega u_j\Theta_0 \partial_{x_j}\phi \qquad \forall\,\phi\in V_0. \label{tempapp}
\end{align}
\end{defn}
We now state some existence and conditional uniqueness results, whose proofs are postponed to Section \ref{appro}. 
We establish first the existence of a weak solution to the approximating problem.
\begin{thm}
\label{existreg} Let assumptions (H1)-(H7) hold. Then there exists a weak solution $(\mathbf{u}_\sigma,\vartheta_\sigma)\in \textbf{V}^r_{div} \times V_0 $ in the sense of Definition \ref{weaksolreg}, for any $p\in(1,2)$ and $r\geq 2$.
\end{thm}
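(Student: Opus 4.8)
The plan is to build a solution of \eqref{fluidapp}--\eqref{tempapp} by a Schauder fixed-point argument that decouples the three sources of nonlinearity: the temperature dependence of the viscosity, the convective term, and the nonlinear monotone stress. The structural fact that makes this work \emph{without any smallness assumption} is that the system is triangular in its a priori bounds: by (H1) the lower bound $\nu\ge\nu_1>0$ makes the velocity estimate independent of the temperature, so there is no circular dependence and one may bound $\textbf{u}$ first and $\vartheta$ afterwards. The $\sigma$-regularisation with $r\ge 2$ provides genuine coercivity of order $r$, which is what ultimately closes the velocity estimate.

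Concretely, I would introduce a map $\Phi:\textbf{L}^2_{div}\times H\to\textbf{L}^2_{div}\times H$, $(\textbf{w},w)\mapsto(\textbf{u},\vartheta)$, defined in two steps. First, given $(\textbf{w},w)$, solve the \emph{linearised} fluid problem: find $\textbf{u}\in\textbf{V}^r_{div}$ with
\begin{equation*}
\int_\Omega \nu(w+\Theta_0)\,\mathcal{S}(x,\varepsilon(\textbf{u})):\varepsilon(\textbf{v})\,dx-\int_\Omega w_j u_i\,\partial_{x_j}v_i\,dx=\langle\textbf{f},\textbf{v}\rangle,\qquad \forall\,\textbf{v}\in\textbf{V}^r_{div},
\end{equation*}
where $\mathcal{S}(x,\mathbf{B}):=\tau(x,\mathbf{B})+\sigma|\mathbf{B}|^{r-2}\mathbf{B}$. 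The associated operator is bounded, hemicontinuous, strictly monotone (by (H4), the strict monotonicity of the $r$-Laplacian, and $\nu\ge\nu_1$; the linearised convective form is skew-symmetric, hence monotone, because $\textbf{w}$ is solenoidal) and coercive on $\textbf{V}^r_{div}$ (the $\sigma$-term gives coercivity of order $r$ after Korn's inequality), so Browder--Minty yields a unique $\textbf{u}$. Second, with this $\textbf{u}$ fixed, solve the linear convection--diffusion problem for $\vartheta\in V_0$; its bilinear form is coercive since $\int_\Omega u_j\vartheta\,\partial_{x_j}\vartheta\,dx=0$ by incompressibility, so Lax--Milgram gives a unique $\vartheta$. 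A fixed point of $\Phi$ is exactly a weak solution in the sense of Definition \ref{weaksolreg}.

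To apply Schauder's theorem I would check the three standard ingredients. \emph{Invariance of a ball:} testing the fluid equation with $\textbf{u}$ annihilates the convective term and, using (H1), (H3) and Korn, gives $\|\textbf{u}\|_{\textbf{V}^r_{div}}\le C(\sigma,\nu_1,\textbf{f})$ independently of $(\textbf{w},w)$; testing the temperature equation with $\vartheta$ annihilates its convective term and, via (H6)--(H7) and the already-established velocity bound (which controls the lift term $\int_\Omega u_j\Theta_0\,\partial_{x_j}\vartheta$ through the Sobolev embedding of $\textbf{V}^r_{div}$), gives $\|\vartheta\|_{V_0}\le C/\kappa$. \emph{Compactness:} since the images are bounded in $\textbf{V}^r_{div}\times V_0$, the compact embeddings $\textbf{V}^r_{div}\hookrightarrow\hookrightarrow\textbf{L}^2_{div}$ and $V_0\hookrightarrow\hookrightarrow H$ make $\Phi$ compact.

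The main obstacle is the \emph{continuity} of $\Phi$. Taking $(\textbf{w}_n,w_n)\to(\textbf{w},w)$ in $\textbf{L}^2_{div}\times H$, the bounds above let me extract weak limits $\textbf{u}_n\rightharpoonup\textbf{u}^\ast$ in $\textbf{V}^r_{div}$ and $\vartheta_n\rightharpoonup\vartheta^\ast$ in $V_0$, with strong convergence in $\textbf{L}^2_{div}$ and $H$ by Rellich. Because $\nu\in W^{1,\infty}$ is bounded and Lipschitz, $\nu(w_n+\Theta_0)\to\nu(w+\Theta_0)$ a.e. and, by dominated convergence, strongly in every $L^s$. The genuine difficulty is passing to the limit in the nonlinear stress, where only weak convergence of $\varepsilon(\textbf{u}_n)$ is available while the coefficient $\nu(w_n+\Theta_0)$ varies simultaneously. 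I would resolve this by Minty's trick: start from the monotonicity inequality $\int_\Omega\nu(w_n+\Theta_0)[\mathcal{S}(x,\varepsilon(\textbf{u}_n))-\mathcal{S}(x,\varepsilon(\textbf{v}))]:(\varepsilon(\textbf{u}_n)-\varepsilon(\textbf{v}))\ge 0$, use the energy identity (the fluid equation tested with $\textbf{u}_n$, whose convective term vanishes) to evaluate $\lim_n\int_\Omega\nu(w_n+\Theta_0)\mathcal{S}(x,\varepsilon(\textbf{u}_n)):\varepsilon(\textbf{u}_n)$, combine the weak limit of $\nu(w_n+\Theta_0)\mathcal{S}(x,\varepsilon(\textbf{u}_n))$ in $L^{r'}$ with the strong convergence of $\nu(w_n+\Theta_0)$ and of $\mathcal{S}(x,\varepsilon(\textbf{v}))$, then set $\textbf{v}=\textbf{u}^\ast-\lambda\textbf{z}$ and let $\lambda\to 0^+$ to identify the weak limit as $\nu(w+\Theta_0)\mathcal{S}(x,\varepsilon(\textbf{u}^\ast))$. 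Strict monotonicity makes both sub-problem limits unique, so the full sequence converges and $\Phi$ is continuous. Schauder's fixed-point theorem then furnishes $(\textbf{u},\vartheta)=\Phi(\textbf{u},\vartheta)$, which is the desired weak solution.
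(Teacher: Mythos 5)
Your overall strategy coincides with the paper's: both decouple the system, solve the sub-problems by monotone-operator theory and Lax--Milgram, use the uniform a priori bounds (which, as you correctly observe, are triangular thanks to $\nu\ge\nu_1$ and the vanishing of the convective terms under testing) together with the compact embedding of $\textbf{V}^r_{div}$ to get compactness of the map, and conclude by a fixed-point theorem. The variations are minor: the paper iterates only on the velocity in $\textbf{L}^4_{div}$ (temperature solved first, then the fluid problem with the convective term \emph{fully} frozen as $\int_\Omega u^\#_j u^\#_i\partial_{x_j}v_i$, so that the fluid sub-problem is a pure monotone equation), and invokes Schaefer's theorem, whereas you iterate on the pair $(\textbf{w},w)$, keep an Oseen-type linearisation $\int_\Omega w_j u_i\partial_{x_j}v_i$ (still monotone by skew-symmetry), and invoke Schauder with ball invariance. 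Your more careful treatment of the limit identification via Minty's trick with the varying coefficient $\nu(w_n+\Theta_0)$ is actually more explicit than the paper, which delegates this to ``classical arguments''.

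There is, however, one concrete defect: the fixed-point space $\textbf{L}^2_{div}\times H$ is too weak for your linearised convective form to be defined. For $\textbf{w}\in\textbf{L}^2_{div}$ and $\textbf{u},\textbf{v}\in\textbf{V}^r_{div}\hookrightarrow\textbf{H}^1_0(\Omega)$ one only has $u_i\partial_{x_j}v_i\in L^{3/2}(\Omega)$ in $d=3$ (and $L^{2-\epsilon}$ in $d=2$), so $\int_\Omega w_ju_i\partial_{x_j}v_i$ requires $\textbf{w}\in\textbf{L}^3(\Omega)$ at least; consequently the operator $B_{\textbf{w}}$ is not bounded on $\textbf{V}^r_{div}$ for generic $\textbf{w}\in\textbf{L}^2_{div}$ and Browder--Minty does not apply as stated. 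The repair is exactly the paper's choice: work in $\textbf{L}^4_{div}$ (or $\textbf{L}^3_{div}$), for which the embedding $\textbf{V}^r_{div}\hookrightarrow\hookrightarrow\textbf{L}^4_{div}$ still holds for $r\ge2$ and $d\le3$, so compactness, the uniform bounds and the continuity argument (now using strong $\textbf{L}^4$ convergence of both $\textbf{w}_n$ and $\textbf{u}_n$ to pass to the limit in the Oseen term) go through unchanged.
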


\begin{oss}
A unique pressure $\pi\in L^{r^\prime}_{(0)}(\Omega)$ can be recovered (see Remark \ref{pressure}).
\end{oss}

\begin{oss}
{We can also consider the case of non-zero divergence, that is,
\begin{align}
-&\mbox{{\rm div}}\left[(\nu(\vartheta + \Theta_0) \mathbf{\tau}(x,\mathbf{\varepsilon}(\mathbf{u})) + \sigma \vert \varepsilon(\mathbf{u}) \vert^{r-2} \varepsilon(\mathbf{u})\right]
+\nabla\pi= \mathbf{f} \qquad \hbox{in }\Omega
\label{div1}\\
&\mbox{{\rm div} }\mathbf{u}=w\qquad \hbox{in }\Omega \label{div2}\\
-&\kappa\Delta \vartheta + \mathbf{u} \cdot \nabla\vartheta = g - \mathbf{u} \cdot \nabla\Theta_0 \qquad \hbox{in }\Omega\label{div3}\\
&\mathbf{u} = \mathbf{0}, \quad \vartheta = 0 \qquad \hbox{on }\partial \Omega.
\label{div4}
\end{align}
Indeed, we can ``subtract'' $B w$ (see, e.g., Ref. \refcite{So}, Lemma 2.1.1 ).
}

%More precisely, observe that we can find a solution $\mathbf{v}$ to %the problem

%\begin{align*}
%&\mbox{{\rm div} }\mathbf{v}=w\qquad \hbox{in }\Omega \\
%&\mathbf{u} = \mathbf{0}, \qquad \hbox{on }\partial \Omega.
%\end{align*}
%provided that $w \in H$ has zero spatial average. Then, setting %$\mathbf{U}:= \mathbf{u}-\mathbf{v}$, we can rewrite
%problem \eqref{div1}-\eqref{div4} in the following form
%\begin{align}
%-&\mbox{{\rm div}}\left[(\nu(\vartheta + \Theta_0) \mathbf{\tau}
%(x,\mathbf{\varepsilon}(\mathbf{U} + \mathbf{v})) + \sigma \vert %\varepsilon(\mathbf{u}) \vert^{r-2} \varepsilon(\mathbf{U} + %\mathbf{v})\right]
%+\nabla\pi= \mathbf{f} + \sigma \Delta \mathbf{v} \qquad \hbox{in %}\Omega
%\label{div5}\\
%&\mbox{{\rm div} }\mathbf{U}=0\qquad \hbox{in }\Omega \label{div6}\\
%-&\kappa\Delta \vartheta + \mathbf{U} \cdot \nabla\vartheta = g - %\mathbf{u} \cdot \nabla\Theta_0 - \mathbf{v} \cdot %\nabla\vartheta\qquad \hbox{in }\Omega\label{div7}\\
%&\mathbf{u} = \mathbf{0}, \quad \vartheta = 0 \qquad \hbox{on %}\partial \Omega.
%\label{div8}
%\end{align}
We expect to be able to prove Theorem \ref{existreg} also in this case.
\end{oss}
We can also prove a first conditional uniqueness result, { in the most interesting case, i.e., when $p\in(1,2)$ and $d=3$}. This is given by
\begin{thm}
\label{unique} Let assumptions (H1)-(H7) hold. Suppose $d=3$ and {$p\in (1,2)$}. %when $d=2$ or $p\in [3/2,2)$ when $d=3$.
Suppose, in addition,
that $\mathbf{u}\in \textbf{W}^{1,q}(\Omega)$ where $q\geq d$. Set $N=\Vert \nabla \mathbf{u}\Vert_{\textbf{L}^q(\Omega)}$.
There exist two positive constants $M_1$ and $M_2$, depending on $\Omega$, $\tau_1$, $\tau_2$, $\nu_1$ and $\Vert\mathbf{f}\Vert_{\textbf{W}^{-1,p^\prime}(\Omega)}$,
such that if
\begin{equation}
\sigma\nu_1 > M_1\frac{\Vert \nu^\prime\Vert_{L^\infty(\mathbb{R})}}{\kappa^{3/2}}\left(\Vert g\Vert_{V^\prime} + \frac{\Vert \Theta_0\Vert_{V}}{\sqrt{\sigma}}\right)\left(1+{N^{p-1}}+\sigma N\right)
+\frac{M_2}{\sqrt{\sigma}}
\label{uniqcond}
\end{equation}
then the weak solution is unique.
\end{thm}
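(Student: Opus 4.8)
The argument follows a weak-strong uniqueness scheme, and the plan is as follows. Let $(\mathbf{u}_1,\vartheta_1)$ be \emph{any} weak solution in the sense of Definition \ref{weaksolreg}, and let $(\mathbf{u}_2,\vartheta_2)$ denote the solution enjoying the additional regularity $\mathbf{u}_2\in\mathbf{W}^{1,q}(\Omega)$, $q>d=3$, with $N=\Vert\nabla\mathbf{u}_2\Vert_{\mathbf{L}^q(\Omega)}$; set $\mathbf{w}=\mathbf{u}_1-\mathbf{u}_2\in\mathbf{V}^r_{div}$ and $\psi=\vartheta_1-\vartheta_2\in V_0$. I would first record the \emph{a priori} estimates already produced in the proof of Theorem \ref{existreg}: testing \eqref{fluidapp} with $\mathbf{u}_i$ and using (H3) together with $\sigma|\varepsilon(\mathbf{u}_i)|^{r-2}\varepsilon(\mathbf{u}_i)\cdot\varepsilon(\mathbf{u}_i)=\sigma|\varepsilon(\mathbf{u}_i)|^{r}$ gives, after Korn's inequality, the bound $\Vert\nabla\mathbf{u}_i\Vert\lesssim(\nu_1\sigma)^{-1/2}$ in the quadratic case $r=2$ underlying \eqref{uniqcond}; testing \eqref{tempapp} with $\vartheta_i$ and using $\mathrm{div}\,\mathbf{u}_i=0$ yields $\Vert\vartheta_i\Vert_V\lesssim\kappa^{-1}(\Vert g\Vert_{V'}+\Vert\Theta_0\Vert_V\Vert\nabla\mathbf{u}_i\Vert)$. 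These two facts already explain the combination $\Vert g\Vert_{V'}+\Vert\Theta_0\Vert_V/\sqrt{\nu_1\sigma}$ and the negative powers of $\nu_1\sigma$ appearing in \eqref{uniqcond}.

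I would then subtract the two copies of \eqref{fluidapp} and test with $\mathbf{v}=\mathbf{w}$. Writing $\tau_i=\tau(\varepsilon(\mathbf{u}_i))$ and $S_i=|\varepsilon(\mathbf{u}_i)|^{r-2}\varepsilon(\mathbf{u}_i)$, I split the viscous contribution as
\[
\nu(\vartheta_1+\Theta_0)\big[(\tau_1-\tau_2)+\sigma(S_1-S_2)\big]+\big[\nu(\vartheta_1+\Theta_0)-\nu(\vartheta_2+\Theta_0)\big]\big(\tau_2+\sigma S_2\big).
\]
Tested against $\varepsilon(\mathbf{w})$, the first bracket is coercive: by (H4) the $\tau$-part is nonnegative, while the strong monotonicity of the $\sigma$-term (for $r=2$, simply $\sigma|\varepsilon(\mathbf{w})|^2$ pointwise) together with $\nu\ge\nu_1$ and Korn's inequality produces the lower bound $\gtrsim\nu_1\sigma\Vert\nabla\mathbf{w}\Vert^{2}$. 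For the inertial terms I use the identity $(u_1)_j(u_1)_i-(u_2)_j(u_2)_i=(u_1)_j w_i+w_j(u_2)_i$, so that with the trilinear form $b(\mathbf{a},\mathbf{b},\mathbf{c})=\int_\Omega a_j b_i\,\partial_{x_j}c_i\,dx$ one has $b(\mathbf{u}_1,\mathbf{w},\mathbf{w})=0$ by $\mathrm{div}\,\mathbf{u}_1=0$, while $|b(\mathbf{w},\mathbf{u}_2,\mathbf{w})|\le\Vert\mathbf{w}\Vert_{\mathbf{L}^4}^{2}\Vert\nabla\mathbf{u}_2\Vert\lesssim(\nu_1\sigma)^{-1/2}\Vert\nabla\mathbf{w}\Vert^{2}$ by the $3$D Sobolev embedding and the velocity \emph{a priori} bound; this is the source of the term $M_2/\sqrt{\nu_1\sigma}$ in \eqref{uniqcond}.

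The genuine obstacle is the temperature--viscosity coupling term $\int_\Omega[\nu(\vartheta_1+\Theta_0)-\nu(\vartheta_2+\Theta_0)](\tau_2+\sigma S_2)\cdot\varepsilon(\mathbf{w})\,dx$, which I would bound pointwise by $\Vert\nu'\Vert_{L^\infty(\mathbb{R})}|\psi|$ and then close through the \emph{temperature} equation. Subtracting the two copies of \eqref{tempapp}, testing with $\phi=\psi$, and using $(u_1)_j\vartheta_1-(u_2)_j\vartheta_2=(u_1)_j\psi+w_j\vartheta_2$ together with $\int_\Omega(u_1)_j\psi\,\partial_{x_j}\psi=0$, gives $\kappa\Vert\nabla\psi\Vert^{2}=\int_\Omega w_j(\Theta_0-\vartheta_2)\,\partial_{x_j}\psi$, whence $\Vert\nabla\psi\Vert\lesssim\kappa^{-1}\Vert\mathbf{w}\Vert_{\mathbf{L}^3}\Vert\Theta_0-\vartheta_2\Vert_{L^6}\lesssim\kappa^{-1}\Vert\nabla\mathbf{w}\Vert\Vert\Theta_0-\vartheta_2\Vert_{L^6}$. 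Feeding this back into the coupling term via Hölder with exponents $(6,3,2)$ — so that $\psi\in L^6$ is controlled by $\Vert\nabla\psi\Vert$ (3D Sobolev), $\varepsilon(\mathbf{w})\in L^2$, and $\tau_2+\sigma S_2\in L^3$ — is exactly where the hypotheses $p\le5/3$ and $q>3$ enter: $p\le5/3$ yields $3(p-1)\le2$, hence $\tau_2\in L^3$ with $\Vert\tau_2\Vert_{L^3}\lesssim1+\Vert\varepsilon(\mathbf{u}_2)\Vert^{\,p-1}\lesssim1+(\sqrt{\nu_1\sigma})^{-(p-1)}$, while $\varepsilon(\mathbf{u}_2)\in\mathbf{L}^q\hookrightarrow\mathbf{L}^3$ gives $\Vert\sigma S_2\Vert_{L^3}\lesssim\sigma N$. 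Inserting the \emph{a priori} bound on $\Vert\vartheta_2\Vert_V$ and collecting the powers of $\kappa$, the coupling term is controlled by a multiple of
\[
\frac{\Vert\nu'\Vert_{L^\infty(\mathbb{R})}}{\kappa^{3/2}}\Big(\Vert g\Vert_{V'}+\frac{\Vert\Theta_0\Vert_V}{\sqrt{\nu_1\sigma}}\Big)\Big(1+\frac{1}{(\sqrt{\nu_1\sigma})^{p-1}}+\sigma N\Big)\Vert\nabla\mathbf{w}\Vert^{2}.
\]
Collecting the coercive lower bound on the left against the two estimated contributions on the right gives $(\sigma\nu_1-R)\Vert\nabla\mathbf{w}\Vert^{2}\le0$, where $R$ is precisely the right-hand side of \eqref{uniqcond}; thus assumption \eqref{uniqcond} forces $\mathbf{w}=\mathbf{0}$ and then, from the temperature identity, $\psi=0$. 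I expect the delicate points to be the exact Hölder/Sobolev bookkeeping in $d=3$ (where $p\le5/3$ and $q>3$ keep $\tau_2+\sigma S_2$ in $L^3$) and the verification that \emph{every} non-coercive term is genuinely quadratic in $\Vert\nabla\mathbf{w}\Vert$, so that it can be absorbed by $\sigma\nu_1\Vert\nabla\mathbf{w}\Vert^{2}$.
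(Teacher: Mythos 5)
Your proposal is correct and follows essentially the same route as the paper: the same splitting of the viscous term into a monotone part (coercive via (H1), (H4) and the $\sigma$-regularization) plus a $\nu$-difference part, the same H\"older $(6,3,2)$ bookkeeping using $p\le 5/3$ to place $\tau(\varepsilon(\mathbf{u}_2))$ in $\mathbf{L}^3$ and $q>3$ to control $\sigma\varepsilon(\mathbf{u}_2)$ there, the same closure of the coupling term through the tested temperature-difference equation, and the same a priori bounds producing each factor in \eqref{uniqcond}. The only cosmetic deviation is that you cancel $b(\mathbf{u}_1,\mathbf{w},\mathbf{w})$ before estimating the remaining convective piece, whereas the paper bounds both convective contributions directly via $\mathbf{L}^4$; both yield the $M_2/\sqrt{\nu_1\sigma}$ term.
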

\begin{oss}
{Observe that, as expected, in the case with $\sigma>0$ and $r\geq2$ the problem resembles the one treated in Ref. \refcite{Bernardi-et-al:2015}, Proposition 2.3. Indeed, in both cases, there is some smallness assumption on the $\textbf{W}^{1,3}(\Omega)$ norm of $\mathbf{u}$.}
\end{oss}
If we assume some additional hypotheses, then we can prove a more refined conditional uniqueness result. In particular, let us suppose 
\begin{equation}
\theta\in H^{3/2}(\partial\Omega),\qquad g\in H.
\label{g}
\end{equation}
{We analyze the problem in Definition \ref{weaksolreg}} in the case with $r=2${, i.e., when the regularization accounts only for the Laplace operator}, and $d=3$. We have
\begin{thm}
	\label{unique2} Let assumptions (H1)-(H7), together with \eqref{g}, hold. Suppose $p\in (1,2)$.
	Then there exist four positive constants $M_1$, $M_2$, $M_3$ and $M_4$, depending on $\Omega$, $\tau_1$, $\tau_2$, $\nu_1$ and $\Vert\mathbf{f}\Vert_{\textbf{W}^{-1,p^\prime}(\Omega)}$,
	such that if
		\begin{align}
&\frac{\sigma\nu_1}{2}<\frac{M_1}{\kappa^2}\Vert \nu^\prime\Vert_{L^\infty(\mathbb{R})}^2\left( 1+\frac{1}{(\sqrt{\sigma})^{p-1}}+ \sqrt{\sigma}\right)^2\nonumber\\&\nonumber\times \left(\frac{1}{\kappa^3\nu_1\sigma}\left(\Vert g\Vert_{V^\prime} + \frac{M_2\Vert \theta\Vert_{H^{1/2}(\partial\Omega)}}{\sqrt{\sigma}}\right)+\frac{1}{\kappa\sqrt{\sigma}}\Vert \theta\Vert_{H^{3/2}(\partial\Omega)}+\frac{1}{\kappa}\Vert g\Vert\right)^2\\&\nonumber
	+ M_3\left(1+\frac{1}{\sqrt{\sigma}}+\sigma \left(\Vert\nu\Vert_{L^\infty(\R)}\right.\right.\\&\left.\left.+\Vert \nu^\prime\Vert_{L^\infty(\mathbb{R})}\left(\left(\frac{1}{\kappa^3\sigma}\left(\Vert g\Vert_{V^\prime} + \frac{M_2\Vert \theta\Vert_{H^{1/2}(\partial\Omega)}}{\sqrt{\sigma}}\right)+\frac{1}{\kappa\sqrt{\sigma}}\Vert \theta\Vert_{H^{3/2}(\partial\Omega)}\right.\right.\right.\right.\nonumber\\&\left.\left.\left.\left.+\frac{1}{\kappa}\Vert g\Vert\right)+\Vert \theta\Vert_{H^{3/2}(\partial\Omega)}\right)\right)\right),
	\label{uniqcond2}
	\end{align}
	then the solution { to \eqref{fluidapp}-\eqref{tempapp}} is unique.
\end{thm}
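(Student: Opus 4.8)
The plan is a monotonicity-plus-energy argument in the spirit of Theorem \ref{unique}, but now exploiting the extra regularity \eqref{g} so as to dispense with the \emph{a priori} assumption $\mathbf{u}\in\mathbf{W}^{1,q}$. Let $(\mathbf{u}_1,\vartheta_1)$ and $(\mathbf{u}_2,\vartheta_2)$ be two weak solutions of the approximating problem with $r=2$ in the sense of Definition \ref{weaksolreg}, and set $\mathbf{u}:=\mathbf{u}_1-\mathbf{u}_2$, $\vartheta:=\vartheta_1-\vartheta_2$ and $\Theta_i:=\vartheta_i+\Theta_0$. The goal is to show that \eqref{uniqcond2} forces $\|\nabla\mathbf{u}\|=0$, whence $\mathbf{u}_1=\mathbf{u}_2$ and, in turn, $\vartheta=0$. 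Throughout I would use the \emph{a priori} bounds of each solution: the momentum energy estimate (testing \eqref{fluidapp} with the solution itself, using (H3) and the $\sigma$-term) controls $\|\nabla\mathbf{u}_i\|\lesssim(\sqrt{\nu_1\sigma})^{-1}$ in terms of $\mathbf{f}$, while \eqref{g} upgrades the temperature to $H^2$ (elliptic regularity for \eqref{tempapp} with $g\in H$ and the $H^2$-lift $\Theta_0$ coming from $\theta\in H^{3/2}(\partial\Omega)$) and, through the $\sigma$-Stokes structure, the velocity to $\mathbf{W}^{1,3}$. These bounds supply the constants $M_1,\dots,M_4$.

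First I would control the temperature by the velocity. Subtracting the two copies of \eqref{tempapp} and testing with $\phi=\vartheta\in V_0$ (admissible since $r=2$), I rewrite $u_{1,j}\vartheta_1-u_{2,j}\vartheta_2=\mathbf{u}_j\vartheta_1+u_{2,j}\vartheta$ and $u_{1,j}\Theta_0-u_{2,j}\Theta_0=\mathbf{u}_j\Theta_0$. Since $\mbox{div}\,\mathbf{u}_2=0$ and $\vartheta\in V_0$, the term $\int_\Omega u_{2,j}\vartheta\,\partial_{x_j}\vartheta$ vanishes, leaving
\[
\kappa\|\nabla\vartheta\|^2=\int_\Omega \mathbf{u}_j(\Theta_0-\vartheta_1)\,\partial_{x_j}\vartheta .
\]
By Hölder and the $3$D embeddings $\mathbf{V}^2_{div}\hookrightarrow\mathbf{L}^6$ and $V\hookrightarrow L^6$ (so $\Theta_0-\vartheta_1\in L^3$), together with Poincaré, this yields $\|\nabla\vartheta\|\le \tfrac{C}{\kappa}\,\|\Theta_0-\vartheta_1\|_{L^3}\,\|\nabla\mathbf{u}\|$. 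The factor $\|\Theta_0-\vartheta_1\|_{L^3}$ must then be estimated purely in terms of the data: this is where the long parenthesis in \eqref{uniqcond2} — combining the $H^1$ bound of $\vartheta_1$ ($\propto \kappa^{-1}(\|g\|_{V'}+\dots)$), the $H^2$ bound afforded by $\theta\in H^{3/2}(\partial\Omega)$, and the $L^2$ contribution of $g$ — originates.

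Next I would subtract the two copies of \eqref{fluidapp} and test with $\mathbf{v}=\mathbf{u}$, splitting the stress difference as
\[
\nu(\Theta_1)\big(S(\mathbf{u}_1)-S(\mathbf{u}_2)\big)+\big(\nu(\Theta_1)-\nu(\Theta_2)\big)S(\mathbf{u}_2),\qquad S(\mathbf{w}):=\tau(\varepsilon(\mathbf{w}))+\sigma\varepsilon(\mathbf{w}).
\]
The diagonal part is coercive: by strict monotonicity (H4), the identity $\sigma(\varepsilon(\mathbf{u}_1)-\varepsilon(\mathbf{u}_2))\cdot\varepsilon(\mathbf{u})=\sigma|\varepsilon(\mathbf{u})|^2$, the bound $\nu\ge\nu_1$ and Korn's inequality, it is $\ge c\,\nu_1\sigma\|\nabla\mathbf{u}\|^2$, producing the $\tfrac{\sigma\nu_1}{2}$ on the left of \eqref{uniqcond2}. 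The viscosity-difference part I bound via $|\nu(\Theta_1)-\nu(\Theta_2)|\le\|\nu'\|_{L^\infty}|\vartheta|$ and (H3) by $\|\nu'\|_{L^\infty}\int_\Omega|\vartheta|\,\big(\tau_2(1+|\varepsilon(\mathbf{u}_2)|)^{p-1}+\sigma|\varepsilon(\mathbf{u}_2)|\big)|\varepsilon(\mathbf{u})|$; Hölder with $\vartheta\in L^6$, $S(\mathbf{u}_2)\in L^3$ (here the $\mathbf{W}^{1,3}$-regularity of $\mathbf{u}_2$ is essential) and $\varepsilon(\mathbf{u})\in L^2$ bounds it by $\|\nu'\|_{L^\infty}K\|\nabla\vartheta\|\|\nabla\mathbf{u}\|$, where $K\propto\big(1+(\sqrt{\nu_1\sigma})^{-(p-1)}+\sigma(\sqrt{\nu_1\sigma})^{-1}\big)$ is exactly the first squared factor in \eqref{uniqcond2}. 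The convective difference reduces, via $\mbox{div}\,\mathbf{u}_1=0$, to $\int_\Omega \mathbf{u}_j u_{2,i}\,\partial_{x_j}\mathbf{u}_i\le C\|\nabla\mathbf{u}_2\|\|\nabla\mathbf{u}\|^2$, which with the regularity constant of $\mathbf{u}_2$ gives the $M_3$ term. Applying Young's inequality to the viscosity-difference term (absorbing $\tfrac{c\nu_1\sigma}{2}\|\nabla\mathbf{u}\|^2$ on the left — this is what turns the linear dependence on $\|\nu'\|_{L^\infty}$ into the quadratic $\|\nu'\|_{L^\infty}^2$ of \eqref{uniqcond2}) and inserting the Step 1 bound $\|\nabla\vartheta\|\le\tfrac{C}{\kappa}\|\Theta_0-\vartheta_1\|_{L^3}\|\nabla\mathbf{u}\|$, I collect everything into $\big(\tfrac{c\nu_1\sigma}{2}-\Phi\big)\|\nabla\mathbf{u}\|^2\le0$, where $\Phi$ gathers the right-hand side of \eqref{uniqcond2}; that condition is precisely what makes the bracket positive, forcing $\nabla\mathbf{u}=0$ and then $\nabla\vartheta=0$.

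The hard part is the reliance, in the critical $3$D, $r=2$ setting, on genuine \emph{a priori} regularity: one must produce the $H^2$-bound of $\vartheta_i$ and the $\mathbf{W}^{1,3}$-bound of $\mathbf{u}_i$ with \emph{explicit} dependence on $\sigma,\kappa,\nu_1$ and the data, since the viscosity-coupling term is not even integrable under mere $H^1$ control of $\mathbf{u}_2$. The elliptic regularity for the temperature is delicate because the convective terms $\mathbf{u}_i\cdot\nabla\vartheta_i$ and $\mathbf{u}_i\cdot\nabla\Theta_0$ must be shown to lie in $L^2$, which itself requires the velocity regularity, so the two bootstraps are intertwined and have to be closed simultaneously. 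Tracking the exact powers of $\sqrt{\nu_1\sigma}$, $\kappa$ and the viscosity norms through these estimates so as to reproduce \eqref{uniqcond2} verbatim is the most laborious and error-prone step.
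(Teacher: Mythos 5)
Your proposal reproduces the paper's first step correctly: the $H^2$ (hence $L^\infty$) bound on each $\vartheta_i$ obtained from elliptic regularity, \eqref{g}, and the $H^{3/2}$ lift — and the paper closes that estimate exactly as you suspect one must, via the interpolation $\Vert\nabla\vartheta_i\Vert_{L^3}\lesssim\Vert\nabla\vartheta_i\Vert^{1/2}\Vert\vartheta_i\Vert_{H^2}^{1/2}$ and Young's inequality, so that only the $H^1$ energy bound on $\mathbf{u}_i$ is needed there. However, your main step has a genuine gap. You test the difference of the momentum equations with $\tilde{\mathbf{u}}=\mathbf{u}_1-\mathbf{u}_2$ and therefore must pair the viscosity-difference term against $\varepsilon(\tilde{\mathbf{u}})\in\textbf{L}^2$ and $\tilde\vartheta\in L^6$, which forces $\tau(\varepsilon(\mathbf{u}_2))\in\textbf{L}^3$, i.e.\ $\varepsilon(\mathbf{u}_2)\in\textbf{L}^{3(p-1)}$, and likewise $\sigma\nabla\mathbf{u}_2\in\textbf{L}^3$. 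The basic energy estimate in the critical case $d=3$, $r=2$ only gives $\nabla\mathbf{u}_2\in\textbf{L}^2$; the $\textbf{W}^{1,3}$ bound you invoke ``through the $\sigma$-Stokes structure'' is not available (the viscosity coefficient $\nu(\vartheta_2+\Theta_0)$ is merely $H^2\cap L^\infty$, and no such regularity theorem is proved or used in the paper), and for $p>5/3$ even the weaker requirement $3(p-1)\le 2$ fails. This is precisely why Theorem \ref{unique} carries both the restriction $p\in(1,5/3]$ and the hypothesis $\mathbf{u}\in\textbf{W}^{1,q}$, $q>3$; your route collapses back onto that theorem rather than proving Theorem \ref{unique2}, whose entire point (see the remark following it) is that the condition depends only on the data.

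The idea you are missing is the paper's choice of test function: it tests the momentum difference with $\textbf{A}^{-1}\tilde{\mathbf{u}}$, where $\textbf{A}$ is the Stokes operator. The coercive term then produces $\tfrac{\sigma\nu_1}{2}\Vert\tilde{\mathbf{u}}\Vert^2$ (an $L^2$, not $H^1$, quantity), while every problematic factor is paired against $\nabla\textbf{A}^{-1}\tilde{\mathbf{u}}$, which satisfies $\Vert\nabla\textbf{A}^{-1}\tilde{\mathbf{u}}\Vert_{\textbf{L}^3(\Omega)}\le C\Vert\textbf{A}^{-1}\tilde{\mathbf{u}}\Vert_{\textbf{H}^2(\Omega)}\le C\Vert\tilde{\mathbf{u}}\Vert$ by Stokes regularity. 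Consequently only $\Vert\tau(\cdot,\varepsilon(\mathbf{u}_2))\Vert_{\textbf{L}^2(\Omega)}\le C(1+\Vert\varepsilon(\mathbf{u}_2)\Vert^{p-1}_{\textbf{L}^2(\Omega)})$ and $\sigma\Vert\nabla\mathbf{u}_2\Vert_{\textbf{L}^2(\Omega)}$ are needed, both of which follow from the energy estimate \eqref{boundvel2} for every $p\in(1,2)$; the temperature coupling $I_2$ is likewise estimated as $\Vert\tilde{\mathbf{u}}\Vert\,\Vert\vartheta_2\Vert_{L^\infty(\Omega)}\Vert\nabla\tilde\vartheta\Vert$, using the $L^\infty$ bound from Step 1 and again only the $L^2$ norm of $\tilde{\mathbf{u}}$. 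This duality argument is what removes both the restriction on $p$ and any a priori regularity hypothesis on the velocity, and it also generates the extra terms in \eqref{uniqcond2} coming from the commutator $-\tfrac{\sigma}{2}(\nu\,\Delta\textbf{A}^{-1}\tilde{\mathbf{u}},\tilde{\mathbf{u}})-\sigma(D\textbf{A}^{-1}\tilde{\mathbf{u}}\,\nabla\Theta_1\,\nu^\prime,\tilde{\mathbf{u}})$ that your sketch does not account for. Without this device (or an equally effective substitute) your argument cannot be closed as stated.
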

\begin{oss}
	Notice that condition \eqref{uniqcond2} is better than  \eqref{uniqcond}, since it does not depend on the solution $\textbf{u}$, but {it depends} only on the data and on $\Omega$, { even though in a nontrivial way}.
\end{oss}

\section{Proofs of the main theoretical results}
\label{sec::proofsTheoretical}
\subsection{Proof of Theorem \ref{exist}}
\label{pp1}
Assume for the moment that the Dirichlet lift $\Theta_0$ belongs to $H^2(\Omega)$ (i.e. $\theta \in H^{3/2}(\partial\Omega)$). Then choose $\sigma=\frac{1}{n}$, $n\in \mathbb{N}_0$, in \eqref{fluidapp} and denote by $(\mathbf{u}_n, \vartheta_n)$ a solution to \eqref{fluidapp}-\eqref{tempapp}.
Take $\mathbf{v}=\mathbf{u}_n$ in \eqref{fluidapp} and $\phi=\vartheta_n$ in \eqref{tempapp}. This yields, on account of $\mbox{div }\mathbf{u}_n=0$, the identities
\begin{align}
&\int_\Omega \nu(\vartheta_n + \Theta_0) \left(\tau_{ij}(x,\varepsilon(\mathbf{u}_n))\varepsilon_{ij}(\mathbf{u}_n)
+ \frac{1}{n} \vert \varepsilon(\mathbf{u}_n))\vert^{r}\right)dx = \langle \mathbf{f}, \mathbf{u}_n \rangle
\label{identity1}\\
&\int_\Omega \kappa \vert \nabla \vartheta_n \vert^2 = \langle g,\vartheta_n \rangle + \int_\Omega(u_n)_j\Theta_0 \partial_{x_j}\vartheta_n.
\label{identity2}
\end{align}
From identity \eqref{identity1}, recalling (H1)-(H3) and using Young's and Korn's inequalities, we deduce
\begin{equation}
\Vert \mathbf{u}_n \Vert^p_{\textbf{V}^p_{div}} + \frac{C}{n}\Vert \mathbf{u}_n \Vert^r_{\textbf{V}^r_{div}} \leq C \left(\Vert \mathbf{f} \Vert^{p^\prime}_{(\textbf{V}^p_{div})^\prime} + 1\right).
\label{bound4}
\end{equation}
Here $C>0$ is a positive constant depending at most on $\Omega$, $d$, $\nu_1$, and $\tau_1$. {By (H3), we also have
\begin{align}
\Vert \tau(\cdot,\varepsilon(\mathbf{u}_n))\Vert_{\textbf{L}^{p'}(\Omega)}\leq C.
\label{ta}
\end{align}
}
From identity \eqref{identity2} we infer
\begin{equation}
\Vert \vartheta_n\Vert_{V_0}^2 \leq C\left(\Vert g \Vert^2_{V^\prime} + \Vert \mathbf{u}_n \Vert^2_{\textbf{H}_{div}}\Vert \Theta_0\Vert^2_{H^2(\Omega)}\right).
\label{bound5}
\end{equation}
Here we have used Poincar\'{e}'s and Young's inequalities as well as the embedding $H^2(\Omega) \hookrightarrow C^0(\overline{\Omega})$. The constant $C>0$ depends at most on $\kappa$, $\Omega$, and $d$.

Combining \eqref{bound4} and \eqref{bound5}, we can find a (not relabeled) subsequence $(\mathbf{u}_{n},\vartheta_{n})$ and a pair $(\mathbf{u},\vartheta)\in \textbf{V}^p_{div} \times V_0$ such
that
\begin{align}
&\mathbf{u}_{n} \rightharpoonup \mathbf{u}\;\hbox{in } \textbf{V}^p_{div}, \quad \mathbf{u}_{n} \rightarrow \mathbf{u} \; \hbox{in } \textbf{L}^2_{div}, \label{convu}\\
&\vartheta_{n} \rightharpoonup \vartheta \;\hbox{in } V_0, \quad \vartheta_{n} \rightarrow \vartheta \;\hbox{in } L^{q}(\Omega)\label{convtheta}
\end{align}
for any given $q< \frac{2d}{d-2}$. Note that this is possible due to the compact embedding $\textbf{V}^p_{div}\hookrightarrow \textbf{L}^2(\Omega)$ given by the assumption $p\in(2d/(d+2),2)$. {If we simply assume $p\in(1,2)$ together with the Stokes problem, i.e., neglecting the convective term $(\mathbf{u}\cdot\nabla)\mathbf{u}$, by the compactness of the embedding $\textbf{V}_{div}^p\hookrightarrow \textbf{L}^q(\Omega)$, for any $q<\frac {dp}{d-p}$, we get
\begin{align}
\mathbf{u}_{n}\to \mathbf{u}\quad \text{in }\textbf{L}^q(\Omega),\quad \forall q<\frac {dp}{d-p}.
\label{conv1b}
\end{align}}
{In both the cases} we also have (cf. \eqref{bound4})
\begin{equation}
\frac{1}{n}\Vert \mathbf{u}_n\Vert_{\textbf{V}^r_{div}}^{r} \rightarrow 0. \label{singpert}
\end{equation}
First of all, observe that, on account of \eqref{convu}-\eqref{convtheta}, we can pass to the limit as $n\to\infty$ in
\begin{equation}
\int_\Omega \kappa \nabla \vartheta_{n} \cdot \nabla \phi + \int_\Omega (u_{n})_j\vartheta_{n} \partial_{x_j}\phi = \langle g,\phi \rangle
+ \int_\Omega (u_{n})_j\Theta_0 \partial_{x_j}\phi \qquad \forall\,\phi\in C_c^\infty(\Omega). \label{tempapp2}
\end{equation}
This gives
\begin{equation}
\int_\Omega \kappa \nabla \vartheta\cdot \nabla \phi + \int_\Omega u_j\vartheta \partial_{x_j}\phi = \langle g,\phi \rangle
+ \int_\Omega u_j\Theta_0 \partial_{x_j}\phi \qquad \forall\,\phi\in C_c^\infty(\Omega) \label{temp2}
\end{equation}
and it is easy to realize, by a density argument, that this variational identity also holds if $\Theta_0 \in V$ and the test functions are taken in $V_0 \cap W^{1,d}(\Omega)$, thanks to the embedding $W^{1,d}(\Omega)\hookrightarrow L^q(\Omega)$ for any $q\in(2,\infty)$ (see also Remark \ref{test}). { Notice that actually the convergence \eqref{conv1b}, together with \eqref{convtheta}, is enough to guarantee the passage to the limit in \eqref{tempapp2}, recalling the compact embedding $V_0\hookrightarrow L^s(\Omega)$ for any $2\leq s<\frac{2d}{d-2}$ and the embedding $W^{1,d}(\Omega)\hookrightarrow L^q(\Omega)$ for all $q\in(2,\infty)$.}

{We now deal with the more difficult problem of passing to the limit in the Navier-Stokes (or Stokes) equation. First we notice that in the case when $p\in(\frac{3d}{d+2},2)$ for Navier-Stokes and $p\in(1,2)$ for Stokes problem, we could more easily obtain the result by exploiting the technique first devised in Ref. \refcite{Li}. The crucial point of this argument is the possibility of using  $\mathbf{w}_n:=\mathbf{u}_n-\mathbf{u}$ as a test function in the weak formulation. This is always possible in the case of Stokes problem, while, in the Navier-Stokes case, on account of the convective term, this is allowed only when $p>\frac{3d}{d+2}$. Thus, in order to handle lower values of $p$, we follow the more general approach introduced in Ref. \refcite{Lip}, which is valid for any $p\in(1,2)$ for the Stokes problem and for any $p\in(\frac{2d}{d+2},2)$ for the Navier-Stokes one.
Let us consider the identity
\begin{align}
&\int_\Omega \nu(\vartheta_{n} + \Theta_0) \tau_{ij}(x,\varepsilon(\mathbf{u}_{n}))\varepsilon_{ij}(\mathbf{v})dx
+ \frac{1}{n} \int_\Omega \nu(\theta_n+\Theta_0)\vert \varepsilon(\mathbf{u}_{n})\vert^{r-2} \varepsilon_{ij}(\mathbf{u}_{n})\varepsilon_{ij}(\mathbf{v})dx \nonumber\\
&-\int_\Omega (u_{n})_j (u_{n})_i \partial_{x_j} v_i dx = \langle \mathbf{f}, \mathbf{v} \rangle \qquad \forall\, \mathbf{v}\in \textbf{V}^r_{div}, \label{fluidapp2}
\end{align}
where the convective term is absent in the case of Stokes problem. The sequence $\{\mathbf{w}_n\}$ satisfies the assumptions of Theorem \ref{app1}, thanks to \eqref{convu}, so that we can consider a Lipschitz sequence $\{\mathbf{w}_{n,j}\}$ satisfying the same properties. Notice that this sequence is still not divergence free. Therefore, recalling the Bogovskii operator defined in Section \ref{sec:notation}, we set
$$
\pmb\psi_{n,j}:=B(\text{div }\mathbf{w}_{n,j})=B(\text{div}(\chi_{\{\mathbf{w}_{n,j}\not= \mathbf{w}_n\}}\mathbf{w}_{n,j})),
$$
where we exploited the fact that $\text{div }\mathbf{w}_n=0$. Combining \eqref{p1} with \eqref{cc}, we get
\begin{align}
\Vert \pmb\psi_{n,j}\Vert_{{\textbf{W}}^{1,p}(\Omega)}\leq C\Vert \text{div}(\chi_{\{\mathbf{w}_{n,j}\not= \mathbf{w}_n\}}\mathbf{w}_{n,j})\Vert_{{L}^p(\Omega)}\leq C\frac{\gamma_n}{\theta_n}\mu_{j+1}+C\epsilon_j,
\label{bnd}
\end{align}
which implies 
\begin{align}
\limsup_{n\to \infty}\Vert \pmb\psi_{n,j}\Vert_{{\textbf{W}}^{1,p}(\Omega)}\leq C\epsilon_j,
    \label{limsup}
\end{align}
for any $j\in\mathbb{N}$. Moreover, recalling that $B: L^q_{(0)}(\Omega) \to {W^{1,q}_0(\Omega)}$ is continuous for any $q\in(1,\infty)$, by \eqref{convu} and Theorem \ref{app1} we have 
$$ 
\text{div }\mathbf{w}_{n,j}\rightharpoonup 0\quad \text{ in }L^{q}(\Omega)\quad \forall q\in(1,\infty),
$$
as $n\to\infty$, so that
\begin{align}
\pmb\psi_{n,j}\rightharpoonup 0 \quad \text{ in }\textbf{W}^{1,q}_0(\Omega)\quad \forall q\in(1,\infty).
\label{wkconv}
\end{align}
Thus, by the compact embedding $\textbf{W}^{1,q}(\Omega)\hookrightarrow \textbf{L}^q(\Omega)$, for any $q>1$, we immediately infer 
\begin{align}
\pmb\psi_{n,j}\to 0 \quad \text{ in }\textbf{L}^{q}(\Omega)\quad \forall q\in(1,\infty).
    \label{strconv}
\end{align}
Let us now set
\begin{align*}
\pmb\phi_{n,j}:=\mathbf{w}_{n,j}-\pmb\psi_{n,j}\in \textbf{V}^q_{div},
\end{align*}
for any $q\in(1,\infty)$. From the above results and from Theorem \ref{app1}, it follows  that, for any $j\in\mathbb{N}$, 
\begin{align}
&\pmb\phi_{n,j}\rightharpoonup 0 \quad \text{ in }\textbf{W}^{1,q}_0(\Omega),
\label{wkconv1}
\\&\pmb\phi_{n,j}\to 0 \quad \text{ in }\textbf{L}^{q}(\Omega),
    \label{strconv2}
\end{align}
for all $q\in (1,\infty)$, as $n\to\infty$.
Hence we can consider $\pmb\phi_{n,j}\in \textbf{V}_{div}^r$ as a test function in \eqref{fluidapp2}, rewriting the result as follows:
\begin{align}
&\int_\Omega \left(\nu(\vartheta_{n} + \Theta_0) \tau(x,\varepsilon(\mathbf{u}_{n}))-\nu(\vartheta + \Theta_0) \tau(x,\varepsilon(\mathbf{u}))\right):\varepsilon(\mathbf{w}_{n,j})dx\nonumber
\\&=- \frac{1}{n} \int_\Omega \nu(\theta_n+\Theta_0)\vert \varepsilon(\mathbf{u}_{n})\vert^{r-2} \varepsilon_{ij}(\mathbf{u}_{n})\varepsilon_{ij}(\pmb\phi_{n,j})dx+\langle\textbf{f},\pmb\phi_{n,j}\rangle \nonumber\\
&\nonumber-\int_\Omega (u_{n})_j (u_{n})_i \partial_{x_j} [\pmb\phi_{n,j}]_i dx- \int_\Omega \nu(\vartheta + \Theta_0) \tau(x,\varepsilon(\mathbf{u})):\varepsilon(\mathbf{w}_{n,j})dx\nonumber\\&
+\int_\Omega \nu(\vartheta_{n} + \Theta_0) \tau(x,\varepsilon(\mathbf{u}_{n})):\varepsilon(\pmb\psi_{n,j})dx:=\sum_{k=1}^5 \mathcal{I}_k.
 \label{fluidapp2bb}
\end{align}
Observe that 
\begin{align*}
    \mathcal{I}_1\leq \frac C n \Vert \mathbf{u}_n\Vert_{\textbf{V}_{div}^r}^{r-1}\Vert \pmb\phi_{n,j}\Vert_{\textbf{W}^{1,r}(\Omega)}\to 0,
\end{align*}
as $n\to \infty$, recalling \eqref{singpert} and \eqref{wkconv1}. Furthermore, by \eqref{wkconv}, recalling that $\mathbf{f}\in \textbf{W}^{-1,p'}(\Omega)$, we have
$$
\mathcal{I}_2 \to 0\quad \text{as }n\to \infty.
$$
Then we observe that, being $p>\frac{2d}{d+2}$, it holds $\mathbf{u}_n\to \mathbf{u}$ in $\textbf{L}^2(\Omega)$, so that  
\begin{align*}
&\Vert \mathbf{u}_n\otimes \mathbf{u}_n-\mathbf{u}\otimes \textbf{u}\Vert_{\textbf{L}^q(\Omega)}\\&\leq C(\Vert \mathbf{u}_n\Vert_{\textbf{L}^{\frac{dp}{d-p}}(\Omega)}+\Vert \mathbf{u}\Vert_{\textbf{L}^{\frac{dp}{d-p}}(\Omega)})\Vert \mathbf{u}_n-\mathbf{u}\Vert \to 0, \quad  q= \frac{2dp}{dp+2(d-p)}>1,
\end{align*}
as $n\to \infty$,  recalling \eqref{bound4} and the embedding $\textbf{V}_{div}^p\hookrightarrow \textbf{L}^{\frac{dp}{d-p}}(\Omega)$. This implies that 
$$
\mathbf{u}_n\otimes \mathbf{u}_n\to \mathbf{u}\otimes \mathbf{u}\quad \text{ in } \textbf{L}^{\frac{2dp}{dp+2(d-p)}}(\Omega)
$$
and this result, together with \eqref{wkconv1}, ensures that 
$$
\mathcal{I}_3\to 0\quad \text{as }n\to \infty.
$$
Concerning $\mathcal{I}_4$, we observe that 
\begin{align*}
\nu(\theta+\Theta_0)\tau(\cdot, \varepsilon(\mathbf{u}))\in \textbf{L}^{p'}(\Omega)\hookrightarrow \textbf{W}^{-1,p^\prime}(\Omega),
\end{align*}
so that \eqref{wkconv} and \eqref{wkconv1} imply
$$
\mathcal{I}_4\to 0\quad \text{as }n\to \infty.
$$
In conclusion, we have
\begin{align*}
\limsup_{n\to\infty}\mathcal{I}_5\leq  C\limsup_{n\to\infty}\Vert \tau(\varepsilon(\mathbf{u}_n))\Vert_{\textbf{L}^{p'}(\Omega)}\Vert \pmb\psi_{n,j}\Vert_{\textbf{W}^{1,p}(\Omega)}\leq C\epsilon_j,\quad \forall j\in \mathbb{N},
\end{align*}
by \eqref{ta} and \eqref{limsup}. Therefore, we obtain from \eqref{fluidapp2bb} that
\begin{align}
\limsup_{n\to\infty}\int_\Omega \left(\nu(\vartheta_{n} + \Theta_0) \tau(x,\varepsilon(\mathbf{u}_{n}))-\nu(\vartheta + \Theta_0) \tau(x,\varepsilon(\mathbf{u}))\right):\varepsilon(\mathbf{w}_{n,j})dx\leq C\epsilon_j,
\label{sp}
\end{align}
for any $j\in\mathbb{N}$.
We point out that, in the case of the Stokes problem, the term $\mathcal{I}_3$ is absent, and thus it is enough to consider $p\in(1,2)$.

Estimate \eqref{sp} entails, by Lemma \ref{app2} with $\phi_n:=\vartheta_n+\Theta_0$,  $\phi:=\vartheta+\Theta_0$ (cf. \eqref{convtheta}) and $\delta_j:=C\epsilon_j$, that, for any $\zeta\in(0,1)$, 
\begin{align}
\limsup_{n\to\infty}\int_\Omega \left[\nu(\vartheta+\Theta_0)(\tau(x,\varepsilon(\mathbf{u}_n))-\tau(x,\varepsilon(\mathbf{u}))):(\varepsilon(\mathbf{u}_n)-\varepsilon(\mathbf{u}))\right]^\zeta dx =0.
    \label{fond1}
\end{align}
Hence, the limit as $n\to\infty$ is also zero, for the integrands are nonnegative. 
As a consequence, up to a subsequence, we have, for almost any $x\in \Omega$,
$$
\nu(\vartheta(x)+\Theta_0(x))\left[\tau(x,\varepsilon(\mathbf{u}_n(x)))-\tau(x,\varepsilon(\mathbf{u}(x)))\right]:(\varepsilon(\mathbf{u}_n(x))-\varepsilon(\mathbf{u}(x)))\to 0,
$$
as $n\to\infty$.
We can now apply Ref. \refcite{Murat}, Lemma 6, with $\beta_k(\cdot)=\beta(\cdot)=\nu(\vartheta(x)+\Theta_0(x))\tau(x,\cdot)$, $\xi_k=\varepsilon(\mathbf{u}_n(x))$, $\xi=\varepsilon(\mathbf{u}(x))$. Then, recalling that $\nu(\cdot)\geq \nu_1>0$, we find
\begin{align}
\varepsilon(\mathbf{u}_n)\to \varepsilon(\mathbf{u}),
    \label{ep}
\end{align}
almost everywhere in $\Omega$. This yields, from assumption $(\textbf{H}2)$,
$$
\tau(\cdot,\varepsilon(\mathbf{u}_n(\cdot)))\to \tau(\cdot,\varepsilon(\mathbf{u}(\cdot))),
$$
almost everywhere in $\Omega$. Thus, on account of \eqref{ta}, by generalized Lebesgue's Theorem, we deduce that
$$
\tau(\cdot,\varepsilon(\mathbf{u}_n(\cdot)))\rightharpoonup \tau(\cdot,\varepsilon(\mathbf{u}(\cdot)))\quad\text{ in }\textbf{L}^{p'}(\Omega).
$$
Exploiting this result and the convergences \eqref{convu} and \eqref{convtheta}, we can thus easily pass to the limit as $n\to \infty$ in \eqref{fluidapp2}, with $\mathbf{v}\in \mathcal{V}$, obtaining in the end
\begin{equation}
\int_\Omega \nu(\vartheta + \Theta_0) \chi_{ij}\varepsilon_{ij}(\mathbf{v})dx -\int_\Omega u_j u_i \partial_{x_j} v_i dx
= \langle \mathbf{f}, \mathbf{v} \rangle \qquad \forall\, \mathbf{v}\in \mathcal{V}\label{fluid2}.
\end{equation}
This concludes the proof.
}
}
\begin{comment}
\begin{oss}
A possible finite-dimensional approximation of solutions to \eqref{Sy08S}-\eqref{Sy11S}  can be constructed through an iteration procedure as follows.
Suppose $r=2$. Fix $\vartheta^h_n$ in some
finite-dimensional space (depending on $h$). Then solve through an iteration procedure the following system in a finite dimensional space of divergence free functions
\begin{align*}
-&\mbox{{\rm div}}\left[\nu(\vartheta^h_n + \Theta^h_{0}) \left(\mathbf{\tau}(x,\mathbf{\varepsilon}(\mathbf{u}))
+ \sigma \Delta\mathbf{u}\right)\right] +\nabla\pi= \mathbf{f}^h \qquad \hbox{in }\Omega\\
&\mathbf{u} = \mathbf{0} \qquad \hbox{on }\partial \Omega.
\end{align*}
Here $\Theta^h_{0}$ and $\mathbf{f}^h$ are suitable finite dimensional approximations of $\Theta_0$ and $\mathbf{f}$, respectively. Call $\mathbf{u}^h_n$
this finite dimensional solution and consider the Dirichlet problem
\begin{align*}
-&k\Delta \vartheta + \mathbf{u}^h_n \cdot \nabla\vartheta = g^h - \mathbf{u}^h_n \cdot \nabla\Theta^h_{0} \qquad \hbox{in }\Omega\\
&\vartheta = 0 \qquad \hbox{on }\partial \Omega.
\end{align*}
where $g^h$ is some finite dimensional approximation of $g$. Solve this problem and call the solution $\vartheta^h_{n+1}$. Through this iteration we construct a
pair $(\mathbf{u}^h_n,\vartheta^h_n)$ which is an approximation of the original solution, provided that \eqref{uniqcond} holds.
\end{oss}
\end{comment}
\subsection{Proof of Theorems \ref{existregol} and \ref{existregol2}}
\label{pp2}
\subsubsection{Preliminary results}
We first introduce some technical tools necessary to carry out the proof. 
\paragraph{Regularity for the Stokes system.}
We consider the $d-$dimensional system (in which we consider the tensor $\tau$ expressed in \eqref{Carreau} or \eqref{Carreau2}):
\begin{align}
\begin{cases}
-\text{div}(\nu(\phi)\tau(D\textbf{u}))+\nabla \pi=\textbf{f},\\
\text{div }\textbf{u}=0,
\end{cases}
\label{Stokes}
\end{align}
with no-slip boundary conditions, for $1<p<2$, where $\phi$ is a generic function and $\textbf{f}\in \textbf{L}^{p^\prime}(\Omega)$. For the case with constant viscosity $\nu$ and $\tau$ as in \eqref{Carreau}, when $p\in\left({3}/{2},2\right)$ and $d=3$, there exists a unique (considering zero mean for the pressure) couple $(\textbf{u},\pi)$ with $\textbf{u}\in \textbf{W}^{1,\overline{q}}(\Omega)\cap \textbf{W}^{2,l}(\Omega)$ and $\nabla\pi\in \textbf{L}^l(\Omega)$ and it holds\cite{Beirao1}
\begin{align}
&\Vert\textbf{u}\Vert_{\textbf{W}^{1,\overline{q}}(\Omega)}\leq C(1+\Vert \textbf{f}\Vert^{\frac{3}{2p-1}}_{\textbf{L}^{p^\prime}(\Omega)}),\label{est1}\\&
\Vert\textbf{u}\Vert_{\textbf{W}^{2,l}(\Omega)}\leq C(\Vert \textbf{f}\Vert_{\textbf{L}^{p^\prime}(\Omega)}+\Vert \textbf{f} \Vert^{\frac{5-p}{2p-1}}_{\textbf{L}^{p^\prime}(\Omega)}),
\label{est2}
\end{align}
where $\overline{q}$ and $l$ are defined in \eqref{qbar}.
Note that the constants $C$ in these two estimates depend on the parameters of the problem and only on $\Vert \nabla\textbf{u}\Vert_{\textbf{L}^p(\Omega)}$ and $\Vert\pi\Vert_{L^{p^\prime}(\Omega)}$, which in this case are \textit{a priori} proven to be bounded and depending only on $\Vert \textbf{f}\Vert_{\textbf{W}^{-1,p^\prime}(\Omega)}$.
{In the case $d=2$ and $p\in\left(1,2\right)$ we have instead the following result\cite{BerselliRou}: supposing $\tau$ given by \eqref{Carreau} or \eqref{Carreau2} (actually the result holds for even more general assumptions on $\tau$), if $\textbf{f}\in \textbf{L}^{p^\prime}(\Omega)$ and $(\textbf{u},\pi)$ is a weak solution to our problem, then 
\begin{align}
\textbf{u}\in \textbf{W}^{1,\overline{q}}(\Omega)\cap\textbf{W}^{2,l}(\Omega), \quad \pi\in W^{1,l}(\Omega),
\label{reg}
\end{align} 
for $\overline{q}$ and $l$ defined as in \eqref{qbar}. This also holds for $d=3$ in the case $p\in(1,3/2]$.}
Following the ideas of Lemma 4 in Ref. \refcite{Abels}, we consider the case with variable viscosity, showing that we can reduce the problem to the weak formulation of \eqref{Stokes}, with $\textbf{f}$ substituted by a suitable $\textbf{F}\in \textbf{L}^{p^\prime}(\Omega)$.
We state the following 
\begin{thm}
	Assume that $\nu\in W^{1,\infty}(\R)$ and let $\Omega\subset \mathbb{R}^d,$ $d=2,3$ be a bounded domain of class $C^2$, such that $0<\nu_*\leq \nu(\cdot)\leq \nu^*$ in $\R$, $\phi\in W^{1,\infty}(\Omega)$ and $\textbf{f}\in \textbf{L}^{p^\prime}(\Omega)$. Consider the (unique) weak solution to \eqref{Stokes}. If $p\in(1,2)$ we have, for $d=2,3$ and assuming \eqref{Carreau} when $d=3$, \eqref{Carreau} or \eqref{Carreau2} when $d=2$,
	$$\textbf{u}\in\textbf{W}^{1,\overline{q}}(\Omega)\cap \textbf{W}^{2,l}(\Omega),\qquad \nabla\pi\in \textbf{L}^l(\Omega),$$ with $\overline{q}$ and $l$ defined in \eqref{qbar}.
		\label{THM1}
\end{thm}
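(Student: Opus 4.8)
The plan is to reduce the variable--viscosity system \eqref{Stokes} to the constant--viscosity one, for which the regularity estimates \eqref{est1}--\eqref{est2} (for $d=3$, with $\tau$ as in \eqref{Carreau}) and \eqref{reg} (for $d=2$, with $\tau$ as in \eqref{Carreau2}) are already available, by absorbing the viscosity into a modified pressure and a modified body force, in the spirit of \cite[Lemma 4]{Abels}. Since $0<\nu_*\leq\nu\leq\nu^*$ and $\nu\in W^{1,\infty}(\R)$, both $\nu(\phi)$ and $1/\nu(\phi)$ belong to $W^{1,\infty}(\Omega)$; here the assumption $\phi\in W^{1,\infty}(\Omega)$ is used crucially, so that $\nabla\nu(\phi)=\nu^\prime(\phi)\nabla\phi\in\textbf{L}^\infty(\Omega)$. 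Let $(\textbf{u},\pi)$ be the unique weak solution of \eqref{Stokes}, with $\pi\in L^{p^\prime}(\Omega)$ recovered through De Rham's theorem as in Remark \ref{pressure}; recall that $\Vert\nabla\textbf{u}\Vert_{\textbf{L}^p(\Omega)}$ and $\Vert\pi\Vert_{L^{p^\prime}(\Omega)}$ are a priori controlled by $\Vert\textbf{f}\Vert_{\textbf{W}^{-1,p^\prime}(\Omega)}$.

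Starting from the distributional identity $-\text{div}(\nu(\phi)\tau(D\textbf{u}))+\nabla\pi=\textbf{f}$ and using the Leibniz rule for the $W^{1,\infty}$--multiplier $\nu(\phi)$ acting on the distribution $\text{div}(\tau(D\textbf{u}))\in\textbf{W}^{-1,p^\prime}(\Omega)$, I would divide by $\nu(\phi)$ and split $\frac{1}{\nu(\phi)}\nabla\pi=\nabla\widetilde{\pi}+\frac{\pi}{\nu(\phi)^2}\nabla\nu(\phi)$ with $\widetilde{\pi}:=\pi/\nu(\phi)$. This produces the identity
\[
-\text{div}(\tau(D\textbf{u}))+\nabla\widetilde{\pi}=\textbf{F},\qquad \text{div}\,\textbf{u}=0,
\]
with
\[
\textbf{F}:=\frac{1}{\nu(\phi)}\textbf{f}+\frac{1}{\nu(\phi)}\tau(D\textbf{u})\,\nabla\nu(\phi)-\frac{\pi}{\nu(\phi)^2}\,\nabla\nu(\phi),
\]
together with the no--slip condition, i.e.\ $(\textbf{u},\widetilde{\pi})$ is a weak solution of the constant--viscosity version of \eqref{Stokes} (with $\nu\equiv1$) and datum $\textbf{F}$. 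The crucial verification is $\textbf{F}\in\textbf{L}^{p^\prime}(\Omega)$: the first term is fine because $1/\nu(\phi)$ is bounded and $\textbf{f}\in\textbf{L}^{p^\prime}(\Omega)$; the second uses the growth bound $\vert\tau(D\textbf{u})\vert\leq\tau_2(1+\vert D\textbf{u}\vert)^{p-1}$ from (H3), whence $\tau(D\textbf{u})\in\textbf{L}^{p^\prime}(\Omega)$ since $D\textbf{u}\in\textbf{L}^p(\Omega)$, multiplied by $\nabla\nu(\phi)\in\textbf{L}^\infty(\Omega)$; the third combines $\pi\in L^{p^\prime}(\Omega)$ with $\nabla\nu(\phi)\in\textbf{L}^\infty(\Omega)$.

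At this point I would invoke the constant--viscosity regularity: since its weak solution is unique by strict monotonicity (cf.\ \cite[Ch.2,Thm.2.2]{Li}), it must coincide with $\textbf{u}$, which therefore inherits \eqref{est1}--\eqref{est2} for $d=3$ and \eqref{reg} for $d=2$, now with $\textbf{F}$ in place of $\textbf{f}$. Finally I would transfer the pressure regularity back through $\pi=\nu(\phi)\widetilde{\pi}$, so that $\nabla\pi=\nu(\phi)\nabla\widetilde{\pi}+\widetilde{\pi}\,\nabla\nu(\phi)$: when $d=2$, $\widetilde{\pi}\in W^{1,q}(\Omega)$ and $\nabla\nu(\phi)\in\textbf{L}^\infty(\Omega)$ yield $\pi\in W^{1,q}(\Omega)$; when $d=3$, $\nabla\widetilde{\pi}\in\textbf{L}^l(\Omega)$ together with $\widetilde{\pi}\in L^{p^\prime}(\Omega)\hookrightarrow L^l(\Omega)$ (the embedding holds because $p^\prime\geq l$ for $p\in(3/2,2)$, as $3p^2-7p+2\leq0$ there) give $\nabla\pi\in\textbf{L}^l(\Omega)$.

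The main obstacle I anticipate is the rigorous justification of the distributional manipulations — in particular the Leibniz rule for the $W^{1,\infty}$ coefficient acting on $\text{div}(\tau(D\textbf{u}))$ and the rewriting of $\nabla\pi/\nu(\phi)$ — together with the membership $\textbf{F}\in\textbf{L}^{p^\prime}(\Omega)$, which is exactly what makes the reduction legitimate. One should also check that no circularity arises: although $\textbf{F}$ depends on $\textbf{u}$ and $\pi$ through $\tau(D\textbf{u})$ and $\pi$, it is a \emph{fixed} element of $\textbf{L}^{p^\prime}(\Omega)$ whose norm is bounded a priori by $\Vert\textbf{f}\Vert_{\textbf{W}^{-1,p^\prime}(\Omega)}$, so that the constants appearing in \eqref{est1}--\eqref{est2} and \eqref{reg} stay under control.
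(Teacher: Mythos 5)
Your proposal is correct and implements the same overall strategy as the paper (reduce to the constant-viscosity problem \eqref{Stokes} with a modified force $\textbf{F}\in\textbf{L}^{p^\prime}(\Omega)$, in the spirit of \cite[Lemma 4]{Abels}, then invoke \eqref{est1}--\eqref{est2} for $d=3$ and \eqref{reg} for $d=2$), but the reduction itself is carried out differently. The paper never divides the equation: it stays at the level of the divergence-free weak formulation and tests against $\textbf{w}=\frac{\textbf{v}}{\nu(\phi)}-B\bigl[\mathrm{div}\bigl(\frac{\textbf{v}}{\nu(\phi)}\bigr)\bigr]$, where $B$ is the Bogovskii operator; the correction term restores $\mathrm{div}\,\textbf{w}=0$, so the pressure never enters the construction of $\textbf{F}$, whose $\textbf{L}^{p^\prime}$ bound involves only $\Vert\textbf{f}\Vert_{\textbf{L}^{p^\prime}(\Omega)}$ and $(1+\Vert D\textbf{u}\Vert^{p-1}_{\textbf{L}^p(\Omega)})\Vert\nabla\phi\Vert_{\textbf{L}^\infty(\Omega)}$, and no distributional Leibniz rule has to be justified. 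You instead work with the full distributional identity, divide by $\nu(\phi)$ and absorb the commutator into a redefined pressure $\widetilde{\pi}=\pi/\nu(\phi)$; this requires the (standard, but to-be-justified) multiplication of $W^{-1,p^\prime}$ distributions by $W^{1,\infty}$ coefficients and makes your $\textbf{F}$ depend on $\pi$, which is harmless since $\Vert\pi\Vert_{L^{p^\prime}(\Omega)}$ is controlled a priori by \eqref{Lppressure}. What your route buys is an explicit formula for the new pressure, so the transfer of the pressure regularity back to $\pi$ (via $\nabla\pi=\nu(\phi)\nabla\widetilde{\pi}+\widetilde{\pi}\nabla\nu(\phi)$ and the correct verification that $p^\prime\geq l$ on $(3/2,2)$) is completely transparent, a point the paper leaves implicit; what the paper's route buys is that it bypasses the pressure and the distributional manipulations entirely. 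Your identification of the unique constant-viscosity weak solution with $\textbf{u}$, and the non-circularity remark about the constants, match the paper's reasoning.
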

\begin{proof}
	We consider $d=3$ and \eqref{Carreau}. In order to begin the proof, we first observe that, appealing to the theory of monotone operators we know that there exists a unique weak solution to \eqref{Stokes} (see, e.g., Ref. \refcite{Li}, Theorems 2.1-2.2). Moreover, we note that, by basic techniques, we get 
	\begin{align}
	\int_\Omega\nu(\phi)(1+\vert D\textbf{u}\vert)^{p-2}\vert D\textbf{u}\vert^2 dx \geq \nu_* 2^{p-2}\left(\int_\Omega\vert D\textbf{u}\vert^pdx-\vert\Omega\vert\right),
	\label{2p}
	\end{align}
	thus, testing the weak formulation with $\textbf{u}$,
	$$
	\Vert D\textbf{u}\Vert^p_{\textbf{L}^p(\Omega)}\leq \frac{2^{2-p}}{\nu_*}(\textbf{f},\textbf{u})+\vert\Omega\vert,
	$$
	so that
	\begin{align}
	\Vert \nabla\textbf{u}\Vert^{p-1}_{\textbf{L}^p(\Omega)}\leq C(\Vert \textbf{f}\Vert_{\textbf{W}^{-1,p^\prime}(\Omega)}+1).
	\label{Lp}
	\end{align}
	Note that $C$ is independent of $\phi$ and depends on $\nu_*$. Then, applying a well known Lemma (see, e.g., Lemma 3.1 of Ref. \refcite{Beirao1}), since we have, in the sense of distributions,
	$$
	\nabla\pi=\text{div}(\nu(\phi)(1+\vert D\textbf{u}\vert)^{p-2}D\textbf{u})+\textbf{f},
	$$
	we deduce, by \eqref{Lp}, 
	\begin{align}
	\Vert\pi\Vert_{L^{p^\prime}(\Omega)}\leq C(1+\Vert \textbf{f}\Vert_{\textbf{W}^{-1,p^\prime}(\Omega)}+\Vert \textbf{f}\Vert_{\textbf{W}^{-1,p^\prime}(\Omega)}^{\frac{1}{p-1}}),
	\label{Lppressure}
	\end{align}
	were $C$ depends also on $\nu^*$ and $\nu_*$. For convenience we can fix $\pi$ by assuming that its mean value in $\Omega$ vanishes.
	
	Therefore, we have obtained the necessary bounds on the two norms involved in the higher-order estimates. We need now to find a weak formulation appealing to \eqref{Stokes} with constant viscosity. 
	 We take $\textbf{v}\in \mathcal{V}$ and set  $\textbf{w}=\frac{\textbf{v}}{\nu(\phi)}-B\left[\text{div}\left(\frac{\textbf{v}}{\nu(\phi)}\right)\right]$, where $B$ is the Bogovskii operator defined in Section \ref{sec:notation}. This clearly gives $\textbf{w}\in \textbf{V}_{div}^p$. Taking then $\textbf{w}$ in the weak formulation we obtain 
	 \begin{align}
	 &\int_\Omega(1+\vert \nabla\textbf{u}\vert)^{p-2}D\textbf{u}\cdot D\textbf{v}dx\nonumber\\
  &=\left(\textbf{f},\frac{\textbf{v}}{\nu(\phi)}-B\left[\text{div}\left(\frac{\textbf{v}}{\nu(\phi)}\right)\right]\right)\nonumber 
  \\&-\left(\nu(\phi)(1+\vert D\textbf{u}\vert)^{p-2}D\textbf{u},\textbf{v}\otimes\nabla\left(\frac{1}{\nu(\phi)}\right)\right)\nonumber\\&+\left(\nu(\phi)(1+\vert D\textbf{u}\vert)^{p-2}D\textbf{u},\nabla B\left[\text{div}\left(\frac{\textbf{v}}{\nu(\phi)}\right)\right]\right).
	 \end{align}
	 Now by means of the assumptions on $\nu$ and \eqref{p2}, we immediately get 
	 \begin{align*}
	 &\left\vert\left(\textbf{f},\frac{\textbf{v}}{\nu(\phi)}-B\left[\text{div}\left(\frac{\textbf{v}}{\nu(\phi)}\right)\right]\right)\right\vert\leq \Vert\textbf{f}\Vert_{\textbf{L}^{p^\prime}(\Omega)}\left(\frac{1}{\nu_*}\Vert\textbf{v}\Vert_{\textbf{L}^p(\Omega)}+C\left\Vert\frac{\textbf{v}}{\nu(\phi)}\right\Vert_{\textbf{L}^p(\Omega)}\right)\\&\leq C\Vert\textbf{f}\Vert_{\textbf{L}^{p^\prime}(\Omega)}\Vert\textbf{v}\Vert_{\textbf{L}^p(\Omega)}.
	 \end{align*}
	 Then, by H\"{o}lder's inequality, we have
	 \begin{align*}
	 &\left\vert\left(\nu(\phi)(1+\vert D\textbf{u}\vert)^{p-2}D\textbf{u},\textbf{v}\otimes\nabla\left(\frac{1}{\nu(\phi)}\right)\right)	\right\vert\\&= \left\vert\left(\nu(\phi)(1+\vert D\textbf{u}\vert)^{p-2}D\textbf{u},\textbf{v}\otimes\left(\frac{\nu^\prime(\phi)}{\nu^2(\phi)}\nabla\phi\right)\right)	\right\vert\\&\leq C\int_\Omega(1+\left\vert D\textbf{u}\right\vert)^{p-1}\vert\textbf{v}\vert\vert\nabla\phi\vert dx \\&\leq C\norm{\nabla\phi}_{\textbf{L}^{p^\prime}(\Omega)}\norm{\textbf{v}}_{\textbf{L}^{p}(\Omega)}+C\norm{\nabla\phi}_{\textbf{L}^\infty(\Omega)}\Vert D\textbf{u}\Vert_{\textbf{L}^p(\Omega)}^{p-1}\Vert\textbf{v}\Vert_{\textbf{L}^p(\Omega)}\\&\leq C(1+\Vert D\textbf{u}\Vert_{\textbf{L}^p(\Omega)}^{p-1})\norm{\nabla\phi}_{\textbf{L}^\infty(\Omega)}\Vert\textbf{v}\Vert_{\textbf{L}^p(\Omega)},
	 \end{align*}
	 and analogously, appealing to \eqref{p1},
	 
	 \begin{align*}
	 &\left\vert\left(\nu(\phi)(1+\vert D\textbf{u}\vert)^{p-2}D\textbf{u},\nabla B\left[\text{div}\left(\frac{\textbf{v}}{\nu(\phi)}\right)\right]\right)\right\vert\\&\leq C
	 \norm{ 1+\vert D\textbf{u}\vert}_{\textbf{L}^p(\Omega)}^{p-1}\left\Vert\nabla B\left[\text{div}\left(\frac{\textbf{v}}{\nu(\phi)}\right)\right]\right\Vert_{\textbf{L}^p(\Omega)}\\&\leq C
	 \left(1+\Vert D\textbf{u}\Vert_{\textbf{L}^p(\Omega)}^{p-1}\right)\left\Vert\text{div}\left(\frac{\textbf{v}}{\nu(\phi)}\right)\right\Vert_{\textbf{L}^p(\Omega)}\\&=C\left(1+\Vert D\textbf{u}\Vert_{\textbf{L}^p(\Omega)}^{p-1}\right)\left\Vert\left(\nabla\frac{1}{\nu(\phi)}\cdot \textbf{v}\right)\right\Vert_{\textbf{L}^p(\Omega)}\\&=C\left(1+\Vert D\textbf{u}\Vert_{\textbf{L}^p(\Omega)}^{p-1}\right)\left\Vert\left(\frac{\nu^\prime(\phi)}{\nu^2(\phi)}\nabla\phi\cdot \textbf{v}\right)\right\Vert_{\textbf{L}^p(\Omega)}\\&\leq C\left(1+\Vert D\textbf{u}\Vert_{\textbf{L}^p(\Omega)}^{p-1}\right)\norm{\nabla\phi}_{\textbf{L}^\infty(\Omega)}\Vert\textbf{v}\Vert_{\textbf{L}^p(\Omega)}.
	 \end{align*}
	 Therefore, we can define a linear continuous functional $\textbf{F}: \mathcal{V} \to \mathbb{R}$ by setting 
	 $$
	 \int_\Omega(1+\vert D\textbf{u}\vert)^{p-2}D\textbf{u}\cdot \nabla\textbf{v}dx\nonumber=<\textbf{F},\textbf{v}>\qquad\forall \textbf{v}\in \mathcal{V},
	 $$
	 and $\textbf{F}$ can be uniquely extended by density to a linear continuous operator (not relabeled) over $\textbf{L}^p(\Omega)$. Thus, again by a density argument, we obtain 
	  	 \begin{align}
	  \int_\Omega(1+\vert D\textbf{u}\vert)^{p-2}D\textbf{u}\cdot \nabla\textbf{v}dx=<\textbf{F},\textbf{v}>\qquad\forall \textbf{v}\in \textbf{V}_{div}^p,
	  \label{Stokes2}
	  \end{align}
	  where, identifying $\textbf{L}^{p^\prime}(\Omega)$ with the dual of $\textbf{L}^p(\Omega)$, we have
	  \begin{equation}
	  \Vert \textbf{F}\Vert_{\textbf{L}^{p^\prime}(\Omega)}\leq C\left(\Vert\textbf{f}\Vert_{\textbf{L}^{p^\prime}(\Omega)}+\left(1+\Vert D\textbf{u}\Vert_{\textbf{L}^p(\Omega)}^{p-1}\right)\norm{\nabla\phi}_{\textbf{L}^\infty(\Omega)}\right),
	  \end{equation} 
	  which means, recalling \eqref{Lp}, 
	    \begin{equation}
	  \Vert \textbf{F}\Vert_{\textbf{L}^{p^\prime}(\Omega)}\leq C\left(\Vert\textbf{f}\Vert_{\textbf{L}^{p^\prime}(\Omega)}+\norm{\nabla\phi}_{\textbf{L}^\infty(\Omega)}\right).
	  \label{F}
	  \end{equation} 
	Let us notice that \eqref{Stokes2} coincides (substituting $\textbf{f}$  with $\textbf{F}$) with the weak formulation of \eqref{Stokes} with constant viscosity, being $\phi\in W^{1,\infty}(\Omega)$. Hence we can apply the aforementioned regularity results to obtain, for $d=3$, from \eqref{est1}-\eqref{est2},  
	  $$\textbf{u}\in \textbf{W}^{1,\overline{q}}(\Omega)\cap \textbf{W}^{2,l}(\Omega),\qquad \nabla\pi\in \textbf{L}^l(\Omega),$$ with $\overline{q}$ and $l$ defined as in \eqref{qbar}. Note that, due to \eqref{est1}, the exponent of the $\textbf{L}^{p^\prime}$-norm of the forcing term $\textbf{F}$ appearing in the estimate is $\frac{ 3}{2p-1}$, i.e.,
	  \begin{equation}
	  \Vert\textbf{u}\Vert_{\textbf{W}^{1,\overline{q}}(\Omega)}\leq C(1+\Vert \textbf{F}\Vert^{\frac{ 3}{2p-1}}_{\textbf{L}^{p^\prime}(\Omega)}).
	  \label{exp}
	  \end{equation}
  We stress again that this has been possible since we have obtained the \textit{a priori} estimates \eqref{Lp}-\eqref{Lppressure} independently of the presence of the variable viscosity: indeed, the constant $C$ in \eqref{exp} depends on $\Vert \nabla\textbf{u}\Vert_{\textbf{L}^p(\Omega)}$ and $\Vert\pi\Vert_{L^{p^\prime}(\Omega)}$. {By the same arguments, we obtain the identical regularity also in the case $p\in(1,3/2]$, thanks to Ref. \refcite{BerselliRou}, Theorem 2.29.}
  
  For the case $d=2$, notice that, up to minor modifications (e.g., when we assume \eqref{Carreau2}, the constant in \eqref{2p} becomes $\nu_*2^{\frac{p-2}{2}}$), the same results \eqref{Stokes2}-\eqref{F} still hold also for the case of $\tau$ given by \eqref{Carreau2}. Indeed, observe that $(1+\vert D\textbf{u}\vert^2)^{\frac{p-2}{2}}\leq C(1+\vert D\textbf{u}\vert)^{p-2}$. Therefore, the regularity result follows from \eqref{Lp} (indeed the weak solution needs to belong to $\textbf{W}^{1,p}(\Omega)$ independently of $\phi$), \eqref{F} and the regularity result \eqref{reg}. The proof is finished.
\end{proof}
\paragraph{Regularity for the Navier-Stokes system.}
We consider the $d-$dimensional system 
\begin{align}
\begin{cases}
-\text{div}(\nu(\phi)(1+\vert D\textbf{u}\vert)^{p-2}D\textbf{u})+(\textbf{u}\cdot\nabla)\textbf{u}+\nabla \pi=\textbf{f},\\
\text{div }\textbf{u}=0,
\end{cases}
\label{NavierStokes}
\end{align}
in $\Omega$, with no-slip boundary conditions, for $1<p<2$, where $\phi$ is a generic function and $\textbf{f}\in \textbf{L}^{p^\prime}(\Omega)$.

Following Ref. \refcite{Beirao2}, Section 6, for the case $d=3$, we can prove the following:

\begin{thm}
	Assume that $\nu\in W^{1,\infty}(\R)$, such that $0<\nu_*\leq \nu(\cdot)\leq \nu^*$ in $\R$, $\phi\in W^{1,\infty}(\Omega)$ and $\textbf{f}\in \textbf{L}^{p^\prime}(\Omega)$. Consider a weak solution to \eqref{NavierStokes}. If $p>p_0=20/11$ we have 
	$$\textbf{u}\in \textbf{W}^{1,\overline{q}}(\Omega)\cap \textbf{W}^{2,l}(\Omega),\qquad \nabla\pi\in \textbf{L}^l(\Omega),$$ with $\overline{q}$ and $l$ defined as in  \eqref{qbar}.
	\label{THM2}
\end{thm}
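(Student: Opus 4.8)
The plan is to run the proof of Theorem \ref{THM1} again, isolating the convective term $(\textbf{u}\cdot\nabla)\textbf{u}$ as the single genuinely new difficulty and pushing the restriction on $p$ up to the threshold at which this term can be absorbed. First I would record the two a priori bounds that are insensitive to the variable viscosity. Testing the weak formulation of \eqref{NavierStokes} with $\textbf{u}$, the convective contribution $\int_\Omega(\textbf{u}\cdot\nabla)\textbf{u}\cdot\textbf{u}\,dx$ vanishes thanks to $\text{div}\,\textbf{u}=0$ and the no-slip condition, so the computation leading to \eqref{2p}--\eqref{Lp} is unchanged and yields $\Vert\nabla\textbf{u}\Vert_{\textbf{L}^p(\Omega)}^{p-1}\leq C(1+\Vert\textbf{f}\Vert_{\textbf{W}^{-1,p^\prime}(\Omega)})$ with $C$ independent of $\phi$. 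For the pressure I would start from $\nabla\pi=\text{div}(\nu(\phi)(1+|D\textbf{u}|)^{p-2}D\textbf{u})+\textbf{f}-(\textbf{u}\cdot\nabla)\textbf{u}$ and apply \cite[Lemma 3.1]{Beirao1}; here the divergence form $(\textbf{u}\cdot\nabla)\textbf{u}=\text{div}(\textbf{u}\otimes\textbf{u})$ is crucial, and the embedding $\textbf{W}^{1,p}(\Omega)\hookrightarrow\textbf{L}^{3p/(3-p)}(\Omega)$ places $\textbf{u}\otimes\textbf{u}\in\textbf{L}^{p^\prime}(\Omega)$ as soon as $p\geq 9/5$, which lies below the final threshold. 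This reproduces the bound \eqref{Lppressure} for $\Vert\pi\Vert_{L^{p^\prime}(\Omega)}$, again independently of $\phi$.

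With these $\phi$-independent bounds in hand, the variable viscosity is removed exactly as in Theorem \ref{THM1}: the Bogovskii test function $\textbf{w}=\textbf{v}/\nu(\phi)-B[\text{div}(\textbf{v}/\nu(\phi))]$, $\textbf{v}\in\mathcal{V}$, transfers $\nu(\phi)$ onto lower-order terms bounded by $\Vert\nabla\phi\Vert_{\textbf{L}^\infty(\Omega)}$, precisely as in \eqref{Stokes2}--\eqref{F}. Since $\Vert\textbf{w}\Vert_{\textbf{L}^p(\Omega)}\leq C\Vert\textbf{v}\Vert_{\textbf{L}^p(\Omega)}$ by \eqref{p1}--\eqref{p2}, the convective term contributes a functional of $\textbf{v}$ whose norm is controlled by $\Vert(\textbf{u}\cdot\nabla)\textbf{u}\Vert_{\textbf{L}^{p^\prime}(\Omega)}$. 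Consequently, \emph{once we know} that $(\textbf{u}\cdot\nabla)\textbf{u}\in\textbf{L}^{p^\prime}(\Omega)$, the problem collapses to a constant-viscosity $p$-Stokes system with a right-hand side $\textbf{F}\in\textbf{L}^{p^\prime}(\Omega)$ obeying \eqref{F}, and Theorem \ref{THM1} (equivalently \eqref{est1}--\eqref{est2}) delivers at once $\textbf{u}\in\textbf{W}^{1,\overline{q}}(\Omega)\cap\textbf{W}^{2,l}(\Omega)$ and $\nabla\pi\in\textbf{L}^l(\Omega)$, with $\overline{q}=4p-2$ and $l=(4p-2)/(p+1)$.

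The crux, and the origin of the restriction $p>p_0=20/11$, is therefore to upgrade the energy regularity $\textbf{u}\in\textbf{W}^{1,p}(\Omega)$ to a level at which $(\textbf{u}\cdot\nabla)\textbf{u}\in\textbf{L}^{p^\prime}(\Omega)$. The hard part is that this cannot be done in one step: at the energy level $\textbf{u}\otimes\textbf{u}$ only lies in a Lebesgue space giving $(\textbf{u}\cdot\nabla)\textbf{u}\in\textbf{W}^{-1,3p/(2(3-p))}(\Omega)$, which is too weak to feed into the $\textbf{L}^{p^\prime}$-based estimates \eqref{est1}--\eqref{est2}. Here I would invoke the bootstrap of \cite[Sec.6]{Beirao2}: one inserts the convective term, in the negative norm afforded by the current integrability of $\textbf{u}$, into the degenerate $p$-Stokes estimate, gains a better exponent for $\nabla\textbf{u}$, and iterates. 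The degeneracy $(1+|D\textbf{u}|)^{p-2}$ of the coercive term together with the three-dimensional Sobolev bookkeeping is what forces the iteration to close only for $p>20/11$, precisely the value at which the target space $\textbf{W}^{1,\overline{q}}(\Omega)$, with $\overline{q}=4p-2>3$, yields $\textbf{u}\in\textbf{L}^\infty(\Omega)$ and hence $(\textbf{u}\cdot\nabla)\textbf{u}\in\textbf{L}^{\overline{q}}(\Omega)\subset\textbf{L}^{p^\prime}(\Omega)$.

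Finally, I would check that the variable viscosity does not raise this threshold: it enters each step of the bootstrap only through the $\nabla\nu(\phi)$-corrections already estimated in the second paragraph, which are lower order and bounded by $\Vert\nabla\phi\Vert_{\textbf{L}^\infty(\Omega)}$ since $\phi\in W^{1,\infty}(\Omega)$. Thus the number $20/11$ is dictated purely by the convective term in the constant-viscosity problem, and once the iteration has produced $(\textbf{u}\cdot\nabla)\textbf{u}\in\textbf{L}^{p^\prime}(\Omega)$ the concluding application of Theorem \ref{THM1} finishes the argument. I expect the balancing of the convective term against the degenerate coercivity in this bootstrap to be the only delicate point, the rest being a faithful transcription of the Stokes proof.
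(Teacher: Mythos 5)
Your proposal follows essentially the same route as the paper: the energy and pressure bounds are shown to be unaffected by the convective term (using $((\mathbf{u}\cdot\nabla)\mathbf{u},\mathbf{u})=0$ and the $p\geq 9/5$ embedding), the variable viscosity is removed via the Bogovskii construction of Theorem \ref{THM1}, the convective term is absorbed into the forcing $\textbf{F}$, and the estimates of \cite[Sec.6]{Beirao2} close the argument — which is exactly what the paper does. One inaccuracy worth noting: the threshold $p_0=20/11$ does not come from $\overline{q}=4p-2>3$ forcing $\mathbf{u}\in\mathbf{L}^\infty(\Omega)$ (that already holds for $p>5/4$); it comes from the interpolation estimate $\Vert(\mathbf{u}\cdot\nabla)\mathbf{u}\Vert_{\textbf{L}^{p'}(\Omega)}\leq C\Vert\nabla\mathbf{u}\Vert_{\textbf{L}^{\overline{q}}(\Omega)}^{\gamma}$ with $\gamma<\tfrac{2p-1}{3}$, which is precisely what makes the exponent $\tfrac{3\gamma}{2p-1}$ in \eqref{exp2} strictly less than one so that Young's inequality lets the $\textbf{W}^{1,\overline{q}}$ norm absorb itself; this closure is a single self-consistent estimate rather than a genuine iteration.
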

\begin{proof}
 We  observe that if we consider $\tilde{\textbf{f}}=\textbf{f}-(\textbf{u}\cdot\nabla)\textbf{u}$ we can recast the problem exactly as in \eqref{Stokes}, with $\tilde{\textbf{f}}$ in place of $\textbf{f}$. Before doing this, we note that, as in the Stokes case, the crucial point is to show that the \textit{a priori} estimates of $\Vert \nabla\textbf{u}\Vert_p$ and $\Vert\pi\Vert_{p^\prime}$ are independent of the extra term $(\textbf{u}\cdot\nabla)\textbf{u}$. Indeed, the constant bounding the more regular norms in Theorem \ref{THM1} depends on them. This is quite immediate, as noticed in Appendix A of Ref. \refcite{Beirao1}, since $((\textbf{u}\cdot\nabla)\textbf{u},\textbf{u})=0$ and thus this term does not appear in the energy estimate leading to \eqref{Lp}, which can be carried out identically. Concerning the pressure $\pi$, we notice that if $p\geq9/5$, which is already guaranteed, being $p_0>9/5$ {. Observe that there is no need of finding a better estimate for the pressure, since the lower bound $p_0$ comes from the treatment of the convective term. Using the Sobolev embedding $\textbf{W}^{1,p}(\Omega)\hookrightarrow \textbf{L}^{2p'}(\Omega)$, we find
 $$
 \Vert \mathbf{u}\Vert_{\textbf{L}^{2p'}(\Omega)}\leq C\Vert \nabla\mathbf{u}\Vert_{\textbf{L}^p(\Omega)},
 $$
 and thus
 }
	$$
	\Vert(\textbf{u}\cdot\nabla)\textbf{u}\Vert_{\textbf{W}^{-1,p^\prime}(\Omega)}\leq C\Vert\textbf{u}\Vert^2_{\textbf{L}^{2p^\prime}(\Omega)}\leq C\Vert\nabla\textbf{u}\Vert_{\textbf{L}^p(\Omega)}^2.
	$$
	This result and \eqref{Lp} yield
	
		\begin{align}
	\Vert\pi\Vert_{L^{p^\prime}(\Omega)}&\leq C(\Vert(\textbf{u}\cdot\nabla)\textbf{u}\Vert_{\textbf{W}^{-1,p^\prime}(\Omega)}+\Vert \textbf{f}\Vert_{\textbf{W}^{-1,p^\prime}(\Omega)}+1)\nonumber\\&\leq C(\Vert \textbf{f}\Vert_{\textbf{W}^{-1,p^\prime}(\Omega)}^{\frac{2}{p-1}}+\Vert \textbf{f}\Vert_{\textbf{W}^{-1,p^\prime}(\Omega)}+1),
	\label{Lppressure2}
	\end{align}
	which is enough for our purposes, {being the constant $C>0$ independent of $\phi$}. Thus, we only need to estimate $\Vert(\textbf{u}\cdot\nabla)\textbf{u}\Vert_{\textbf{L}^{p^\prime}(\Omega)}$ in order to follow the proof Theorem \ref{THM1}. We can repeat word by word the estimates devised in Ref. \refcite{Beirao2}, Section 6 to obtain in the end, in force of the validity of \eqref{Lp}, that, if $p>p_0$,
	$$
	\Vert(\textbf{u}\cdot\nabla)\textbf{u}\Vert_{\textbf{L}^{p^\prime}(\Omega)}\leq C\Vert\nabla\textbf{u}\Vert_{\textbf{L}^{\overline{q}}(\Omega)}^\gamma,
	$$
	where $\overline{q}$ is defined in \eqref{qbar} and $0\leq\gamma<\frac{2p-1}{3}$. We thus follow the proof of Theorem \ref{THM1}: in particular, from \eqref{exp} we have in $\textbf{F}$ the extra term $(\textbf{u}\cdot\nabla)\textbf{u}$ (indeed, we have $\tilde{\textbf{f}}$ in place of $\textbf{f}$), therefore, being $\phi\in W^{1,\infty}(\Omega)$, we deduce 
	  \begin{equation}
	\Vert\textbf{u}\Vert_{\textbf{W}^{1,\overline{q}}(\Omega)}\leq C\left(1+\Vert(\textbf{u}\cdot\nabla)\textbf{u}\Vert_{\textbf{L}^{p^\prime}(\Omega)}^{\frac{3}{2p-1}}\right)\leq C\left(1+\Vert\nabla\textbf{u}\Vert_{\textbf{L}^{\overline{q}}(\Omega)}^{\frac{3\gamma}{2p-1}}\right)
	\label{exp2}
	\end{equation}
	and, since $\frac{3\gamma}{2p-1}<1$, by Young's inequality we infer that $	\Vert\textbf{u}\Vert_{\textbf{W}^{1,\overline{q}}(\Omega)}$ is bounded. From this result, which bounds $\Vert(\textbf{u}\cdot\nabla)\textbf{u}\Vert_{\textbf{L}^{p^\prime}(\Omega)}$, we also get the $\textbf{W}^{2,l}$-regularity of $\textbf{u}$ given in the statement of this theorem (see \eqref{est2}). This ends the proof. 
\end{proof}
\paragraph{Useful estimates in 2D and 3D.} We now recall some important estimates used in the sequel. We start with a Sobolev embedding in 3D (see Ref. \refcite{Brezis}): let $p\in(3/2,2)$, then
\begin{align}
W^{1,p}(\Omega)\hookrightarrow L^{3+\delta}(\Omega),
\label{embedding}
\end{align}
for $\delta=\frac{6p-9}{3-p}>0$.
Moreover, we also have
$$
W^{2,3+\delta}(\Omega)\hookrightarrow W^{1,\infty}(\Omega),
$$
and the following Gagliardo-Nirenberg's inequality  
\begin{align}
\Vert f\Vert_{W^{1,\infty}(\Omega)}\leq C\Vert f\Vert^{\chi_3}_{W^{1,2}(\Omega)}\Vert f\Vert_{W^{2,3+\delta}(\Omega)}^{1-\chi_3},
\label{Gagliardo}
\end{align}
with $\chi_3=\frac{2\delta}{9+5\delta}<1$. {Observe that the validity of this inequality which allows to set the lower bound for $p$ at $3/2$ when $d=3$, also in the case of the Stokes problem (see Remark \ref{validity}).} 

We continue with a similar Sobolev embedding in 2D (see again Ref. \refcite{Brezis}): let $p\in(1,2)$, then
\begin{align}
W^{1,p}(\Omega)\hookrightarrow L^{2+\delta}(\Omega),
\label{embedding2}
\end{align}
for $\delta=\frac{4(p-1)}{2-p}>0$.
In this case, we have the Gagliardo-Nirenberg inequality  
\begin{align}
\Vert f\Vert_{W^{1,\infty}(\Omega)}\leq C\Vert f\Vert^{\chi_2}_{W^{1,2}(\Omega)}\Vert f\Vert_{W^{2,2+\delta}(\Omega)}^{1-\chi_2},
\label{Gagliardo2}
\end{align}
with $\chi_2=\frac{\delta}{2(1+\delta)}<1$.
We can now prove Theorems \ref{existregol} and \ref{existregol2}.
\subsubsection{Proof of Theorems \ref{existregol} and \ref{existregol2}}
	The existence of a weak solution is the result of Theorem \ref{exist}. In order to gain the additional regularity we want to apply Theorem \ref{THM2} to this specific weak solution, which can be considered as a weak solution to \eqref{NavierStokes}, neglecting the equation for $\Theta=\vartheta+\Theta_0$ (with $\phi=\Theta$). Therefore, the only assumption to be verified is that $\vartheta\in W^{1,\infty}(\Omega)$ and then the proof is finished, since, due to \eqref{extra} and \eqref{extraB1} for the 3D and 2D case, respectively, $\Theta_0\in W^{2,d+\delta}(\Omega)$, for $\delta$ depending on $d$ (see \eqref{delta} and \eqref{a1}).
	By well-known elliptic regularity results {(indeed $\vartheta$ is a weak solution in the present case, see Remark \ref{test})}
	\begin{equation}
	\Vert\vartheta\Vert_{W^{2, d+\delta}(\Omega)} \leq \frac{1}{\kappa}\left(\Vert g\Vert_{L^{ d+\delta}(\Omega)}+\Vert\textbf{u}\cdot \nabla\vartheta\Vert_{L^{ d+\delta}(\Omega)}+\Vert\textbf{u}\cdot \nabla\Theta_0\Vert_{L^{ d+\delta}(\Omega)}\right).
	\label{theta1}
	\end{equation}
	We need to observe that, due to \eqref{embedding} and \eqref{Gagliardo} for $d=3$ and \eqref{embedding2} and \eqref{Gagliardo2} for $d=2$, by Young's inequality,
	\begin{align*}
		\frac{1}{\kappa}\Vert\textbf{u}\cdot \nabla\vartheta\Vert_{L^{ d+\delta}(\Omega)}&\leq\frac{1}{\kappa} \Vert\textbf{u}\Vert_{\textbf{L}^{ d+\delta}(\Omega)}\Vert\nabla\vartheta\Vert_{\textbf{L}^\infty(\Omega)}\\&\leq C\Vert\textbf{u}\Vert_{\textbf{W}^{1,p}(\Omega)}\Vert \vartheta\Vert^{\chi_d}_{W^{1,2}(\Omega)}\Vert \vartheta\Vert_{W^{2, d+\delta}(\Omega)}^{1-\chi_d}\\&\leq C\Vert\textbf{u}\Vert_{\textbf{W}^{1,p}(\Omega)}^{1/\chi_d}\Vert \vartheta\Vert_{W^{1,2}(\Omega)}+\frac{1}{2}\Vert \vartheta\Vert_{W^{2, d+\delta}(\Omega)}.
	\end{align*}
Note that this is possible since $1-\chi_d <1$. Moreover, again by \eqref{embedding} and the properties of the lift operator, we have that
	\begin{align*}
\frac{1}{\kappa}\Vert\textbf{u}\cdot \nabla\Theta_0\Vert_{\textbf{L}^{ d+\delta}(\Omega)}&\leq\frac{1}{\kappa} \Vert\textbf{u}\Vert_{\textbf{L}^{ d+\delta}(\Omega)}\Vert\nabla\Theta_0\Vert_{\textbf{L}^\infty(\Omega)}\\&\leq C\Vert\textbf{u}\Vert_{\textbf{W}^{1,p}(\Omega)}\Vert \Theta_0\Vert_{W^{2, d+\delta}(\Omega)}\\&\leq C\Vert\textbf{u}\Vert_{\textbf{W}^{1,p}(\Omega)}\Vert \theta\Vert_{W^{2-\frac{1}{ d+\delta}, d+\delta}(\partial\Omega)}.
\end{align*}
Collecting these results and using them in \eqref{theta1}, we immediately deduce that $\Vert\vartheta\Vert_{W^{2, d+\delta}(\Omega)}$ is bounded, meaning that, by the already mentioned Sobolev embedding, $\Theta=\vartheta+\Theta_0\in W^{2, d+\delta}(\Omega)\hookrightarrow W^{1,\infty}(\Omega)$. Note that in the case $d=3$ with \eqref{Carreau} this is enough to apply Theorem \ref{THM2} and conclude the proof, whereas for $d=2$, if we consider the Stokes problem and either \eqref{Carreau} or \eqref{Carreau2}, neglecting the convective term $(\textbf{u}\cdot\nabla)\textbf{u}$, by Theorem \ref{THM1} the proof follows. Note that, in the case of the Stokes problem also for $d=3$, we can repeat the same proof and conclude in force of Theorem \ref{THM1}, which holds also for any $p\in(3/2,2)$. %If instead we take into account that term, considering the Navier-Stokes equation with \eqref{Carreau}, we need to have the condition on the smallness of the parameters, the data and also $\Vert\Theta\Vert_{W^{1,\infty}(\Omega)}$, but by the embedding $W^{2,2+\delta}(\Omega)\hookrightarrow W^{1,\infty}(\Omega)$ and the performed estimates we observe that also this norm only depends on the data (e.g., $\textbf{f},\ g$) and the parameters. Therefore, the smallness conditions reduces to a condition completely depending on the parameters and the data of the problem. 
The first part of the proof is thus concluded.

If we consider the additional stronger hypotheses \eqref{extra2} and \eqref{extraB2}, for $d=3$ and $d=2$, respectively, we first note that $\Theta_0\in W^{3, s}(\Omega)$, with {$s=\min\{\overline{q},d+\delta\}$}.
In the case $d=3$ we consider both the Navier-Stokes problem with the restriction $p\in(p_0,2)$ and the Stokes problem with $p\in(3/2,2)$ (with \eqref{Carreau}). In the case $d=2$ we consider the Stokes problem {(with either \eqref{Carreau} or \eqref{Carreau2}) with $p\in(1,2)$}. We can now apply a bootstrap argument.  Indeed, again by elliptic regularity we have 
	\begin{align}
&\Vert\vartheta\Vert_{W^{3, s}(\Omega)}\leq \frac{1}{\kappa}\left(\Vert g\Vert_{W^{1, s}(\Omega)}+\Vert\textbf{u}\cdot \nabla\vartheta\Vert_{W^{1, d+\delta}(\Omega)}+\Vert\textbf{u}\cdot \nabla\Theta_0\Vert_{W^{1, s}(\Omega)}\right),
\label{theta6}
\end{align}
{with $s=\min\{\overline{q},d+\delta\}$.}
Clearly, the only new terms to be controlled are the ones related to the spatial gradients. We have
$$
\Vert \nabla(\textbf{u}\cdot \nabla\vartheta)\Vert_{\textbf{L}^{ s}(\Omega)}\leq \Vert\nabla\textbf{u}\Vert_{\textbf{L}^{ s}(\Omega)}\Vert\vartheta\Vert_{W^{1,\infty}(\Omega)}+\Vert\textbf{u}\Vert_{\textbf{L}^\infty(\Omega)}\Vert\vartheta\Vert_{W^{2, s}(\Omega)},
$$
but, for $d=3$, $ 3+\delta<\overline{q}$ for $p\in (3/4,2)$, {entailing $s=3+\delta$.} Due to the embeddings \eqref{embedding} and $\textbf{W}^{1,\overline{q}}(\Omega)\hookrightarrow \textbf{L}^\infty(\Omega)$, we infer that this term is then bounded thanks to the regularity obtained in the first part of the proof. {For the case $d=2$, we have $s=\overline{q}<2+\delta$ and since we have $\mathbf{u}\in\textbf{W}^{1,\overline{q}}(\Omega)\hookrightarrow \textbf{L}^\infty(\Omega)$, being $\overline{q}>2$ for any $p\in(1,2)$, we get the same result using for $\vartheta$ the embedding $W^{2,s}(\Omega)\hookrightarrow W^{2,2+\delta}(\Omega)$}. 
Similarly, we have 
$$
\Vert \nabla(\textbf{u}\cdot \nabla\Theta_0)\Vert_{\textbf{L}^{ s}(\Omega)} \leq \Vert\nabla\textbf{u}\Vert_{\textbf{L}^{ s}(\Omega)}\Vert\Theta_0\Vert_{W^{1,\infty}(\Omega)}+\Vert\textbf{u}\Vert_{\textbf{L}^\infty(\Omega)}\Vert\Theta_0\Vert_{W^{2, s}(\Omega)},
$$
where $\Vert\Theta_0\Vert_{W^{1,\infty}(\Omega)}\leq C\Vert\Theta_0\Vert_{W^{2, s}(\Omega)}\leq C\Vert\theta\Vert_{W^{2-\frac{1}{ s}, s}(\Omega)}.$ {Indeed, for $d=3$, we have $s=3+\delta$ and $W^{2,3+\delta}(\Omega)\hookrightarrow W^{1,\infty}(\Omega)$, whereas, when $d=2$, $2<s=\overline{q}<2+\delta$, so that, again, $W^{2,s}(\Omega)\hookrightarrow W^{1,\infty}(\Omega)$.} Therefore, this term is bounded, concluding the proof, thanks to assumption \eqref{extra2} for $d=3$ and \eqref{extraB2} for $d=2$.
\subsection{Proofs of Section \ref{sec:approx}}
\label{appro}
\subsubsection{Proof of Theorem \ref{existreg}}
Let $\mathbf{u}^\# \in \textbf{L}^4_{div}$ be given.
Then, recalling that $\mbox{div }\mathbf{u}^\#=0$,
it is not difficult to prove that there is a unique $\vartheta^\#=\vartheta^\#(\mathbf{u}^\#)\in V$ which solves
\begin{equation}
\int_\Omega \kappa \nabla \vartheta^\# \cdot \nabla \phi + \int_\Omega u^\#_j\vartheta^\# \partial_{x_j}\phi = \langle g,\phi \rangle
+ \int_\Omega u^\#_j\Theta_0 \partial_{x_j}\phi \qquad \forall\,\phi\in V_0. \label{tempfix}
\end{equation}
Consider now the problem of finding $\mathbf{u}^* \in \textbf{V}^r_{div}$ such that
\begin{align}
&\int_\Omega \left[\nu(\vartheta^\# + \Theta_0)\left( \tau_{ij}(x,\varepsilon(\mathbf{u}^*)) + \sigma \vert \varepsilon(\mathbf{u}^*))\vert^{r-2} \varepsilon_{ij}(\mathbf{u}^*)\right)\right]\varepsilon_{ij}(\mathbf{v})dx \nonumber\\
&-\int_\Omega u^\#_j u^\#_i \partial_{x_j} v_i dx = \langle \mathbf{f}, \mathbf{v} \rangle \qquad \forall\, \mathbf{v}\in \textbf{V}^r_{div}. \label{fluidfix}
\end{align}
This problem has a unique solution.
Thus we have constructed a nonlinear mapping $\mathcal{F}: \textbf{L}^4_{div} \to \textbf{L}^4_{div}$ defined as follows $\mathcal{F}(\mathbf{u}^\#)=\mathbf{u}^*$.
It is easy to realize that $\mathcal{F}$ maps bounded sets of $\textbf{L}^4_{div}$ into bounded sets of $\textbf{V}^r_{div}\hookrightarrow\hookrightarrow \textbf{L}^4_{div}$, {for $r\geq2$}. Just take $\mathbf{v}=\mathbf{u}^*$ in \eqref{fluidfix}.
We now prove that $\mathcal{F}$ is also continuous so $\mathcal{F}$ is a compact operator. Indeed, let $\{ \mathbf{u}_n\}$ be such that $\mathbf{u}_n \to \mathbf{u}^\#$ in $\textbf{L}^4_{div}$. Then there exists $\{\vartheta_{n_h}\}$ which converges (weakly) in $V_0$ and
strongly in $H$ to $\vartheta^\#$.
On the other hand, $\{ \mathcal{F}(\mathbf{u}_{n_h})\} \subset \textbf{L}^4_{div}$ is also bounded in
$\textbf{V}^r_{div}$. Using classical arguments (see, for instance, Ref. \refcite{Li}), we can prove that there exists a subsequence $\{\mathbf{u}_{n_{h_m}}\}$ such that
$\{\mathcal{F}(\mathbf{u}_{n_{h_m}})\}$
converges weakly in $\textbf{V}^r_{div}$ and strongly in $\textbf{L}^4_{div}$ to a solution $\tilde{\mathbf{u}}$ to \eqref{fluidfix} which coincides, due to uniqueness, with $\mathcal{F}(\mathbf{u}^\#)$.
The class limit $\Lambda$ of $\{ \mathcal{F}(\mathbf{u}_n)\}$ is non-empty and bounded in $\textbf{V}^r_{div}$. Arguing as above, it is easy to show that any $\mathbf{w}\in \Lambda$ is such that $\mathbf{w}=\mathcal{F}(\mathbf{u}^\#)$. Hence $\mathcal{F}$ is continuous from $\textbf{L}^4_{div}$ to itself.

Observe now that if $\mathbf{w}\in \textbf{L}^4_{div}$ is such that $\mathbf{w}= \lambda\mathcal{F}(\mathbf{w})$ for some $\lambda\in(0,1)$ then there
exists $C_1>0$ such that $\Vert \mathbf{w}\Vert_{\textbf{L}^4_{div}}\leq C_1$. Indeed, we have
\begin{align}
&\int_\Omega \left[\nu(\vartheta(\mathbf{w} + \Theta_0) \left(\tau_{ij}(x,\varepsilon(\lambda^{-1}\mathbf{w})) + \sigma \vert \varepsilon(\lambda^{-1}\mathbf{w}))\vert^{r-2} \varepsilon_{ij}(\lambda^{-1} \mathbf{w})\right)\right]\varepsilon_{ij}(\mathbf{v})dx \nonumber\\
&-\int_\Omega w_j w_i \partial_{x_j} v_i dx = \langle \mathbf{f}, \mathbf{v} \rangle \qquad \forall\, \mathbf{v}\in \textbf{V}^r_{div}. \label{fluidfix2}
\end{align}
Then, taking $\mathbf{v}=\mathbf{w}$ and using Young's  and Korn's inequalities, we deduce the existence of $C_2$ depending on $d$, $\Omega$, $\sigma$, and $\Vert\mathbf{f}\Vert_{\textbf{W}^{-1,p^\prime}(\Omega)}$ such that
\begin{equation*}
\Vert \mathbf{u}\Vert_{\textbf{V}^r_{div}} \leq C_2.
\end{equation*}
Thus we can apply Schaefer's fixed point theorem and conclude that  $\mathcal{F}$ has a fixed point in $\textbf{L}^4_{div}$. This is equivalent to say that the approximating problem \eqref{fluidapp}-\eqref{tempapp} has a solution $(\mathbf{u}_\sigma, \vartheta_\sigma)$.
\subsubsection{Proof of Theorems \ref{uniquebis} and \ref{unique}}
\label{uniqueness}
We start from the proof of Theorem  \ref{unique} to introduce the setting, since the proof of Theorem \ref{uniquebis} exploits similar estimates and notations. 
Suppose $(\mathbf{u}^k,\vartheta^k) \in \textbf{V}^r_{div} \times V_0$, $k=1,2$, satisfy the following system
\begin{align}
&\int_\Omega \left[\nu(\vartheta + \Theta_0)\left( \tau_{ij}(x,\varepsilon(\mathbf{u})) +\sigma \vert \varepsilon(\mathbf{u}))\vert^{r-2} \varepsilon_{ij}(\mathbf{u})\right)\right]\varepsilon_{ij}(\mathbf{v})dx \nonumber\\
& -\int_\Omega u_j u_i \partial_{x_j} v_i dx = \langle \mathbf{f}, \mathbf{v} \rangle \qquad \forall\, \mathbf{v}\in \textbf{V}^r_{div}, \label{fluidapps}\\
&\int_\Omega \kappa \nabla \vartheta \cdot \nabla \phi + \int_\Omega u_j\vartheta \partial_{x_j}\phi = \langle g,\phi \rangle
+ \int_\Omega u_j\Theta_0 \partial_{x_j}\phi \qquad \forall\,\phi\in V_0. \label{tempapps}
\end{align}
Suppose $r=2$ and $d=3$, that is, the worst case {in the approximating scheme}.
Then set $\tilde{\mathbf{u}}=\mathbf{u}^1 -\mathbf{u}^2$ and $\tilde{\vartheta}= \vartheta^1 - \vartheta^2$ and observe that $(\tilde{\mathbf{u}},\tilde\vartheta)$
satisfies
\begin{align}
&\int_\Omega \nu(\vartheta^1 + \Theta_0) \left[(\tau_{ij}(x,\varepsilon(\mathbf{u}^1))-  \tau_{ij}(x,\varepsilon(\mathbf{u}^2))\right]\varepsilon_{ij}(\mathbf{v})dx
\nonumber\\&+\sigma \int_\Omega \nu(\vartheta^1 + \Theta_0)\nabla\tilde{\mathbf{u}}\cdot\nabla\mathbf{v}dx \nonumber\\&=-\int_\Omega \left[\nu(\vartheta^1 + \Theta_0) - \nu(\vartheta^2 + \Theta_0)\right](\tau_{ij}(x,\varepsilon(\mathbf{u}^2)) \varepsilon_{ij}(\mathbf{v})dx\nonumber\\
&-\sigma\int_\Omega \left[\nu(\vartheta^1 + \Theta_0) - \nu(\vartheta^2 + \Theta_0)\right] \nabla \mathbf{u}^2\cdot\nabla\mathbf{v}dx\nonumber\\
&+ \int_\Omega (\tilde{u}_j u^1_i + u^2_j \tilde{u}_i) \partial_{x_j} v_i dx
\qquad \forall\, \mathbf{v}\in \textbf{V}^r_{div}, \label{fluiddiff}\\
&\int_\Omega \kappa \nabla \tilde\vartheta \cdot \nabla \phi + \int_\Omega u^1_j\tilde\vartheta \partial_{x_j}\phi =
-\int_\Omega \tilde u_j\vartheta^2 \partial_{x_j}\phi + \int_\Omega \tilde u_j\Theta_0 \partial_{x_j}\phi \qquad \forall\,\phi\in V_0. \label{tempdiff}
\end{align}
Take now $\mathbf{v}=\tilde{\mathbf{u}}$ in \eqref{fluiddiff} and $\phi=\tilde\vartheta$ in \eqref{tempdiff}. Recalling that $\mathbf{u}_1$ is divergence free, this gives
\begin{align}
&\int_\Omega \nu(\vartheta^1 + \Theta_0) \left[(\tau_{ij}(x,\varepsilon(\mathbf{u}^1))-  \tau_{ij}(x,\varepsilon(\mathbf{u}^2))\right]\varepsilon_{ij}(\tilde{\mathbf{u}})dx
+\sigma \int_\Omega \nu(\vartheta^1 + \Theta_0)\vert \nabla\tilde{\mathbf{u}}\vert^2 dx \nonumber\\
&=-\int_\Omega \left[\nu(\vartheta^1 + \Theta_0) - \nu(\vartheta^2 + \Theta_0)\right](\tau_{ij}(x,\varepsilon(\mathbf{u}^2)) \varepsilon_{ij}(\tilde{\mathbf{u}})dx\nonumber\\
&-\sigma\int_\Omega \left[\nu(\vartheta^1 + \Theta_0) - \nu(\vartheta^2 + \Theta_0)\right] \nabla \mathbf{u}^2\cdot\nabla\tilde{\mathbf{u}}dx\nonumber\\
&+ \int_\Omega (\tilde{u}_j u^1_i + u^2_j \tilde{u}_i) \partial_{x_j} \tilde{u}_i dx
\qquad \forall\, \mathbf{v}\in \textbf{V}^r_{div}, \label{fluiddiff2}\\
&\kappa\int_\Omega \vert \nabla \tilde\vartheta\vert^2 =
-\int_\Omega \tilde u_j\vartheta^2 \partial_{x_j}\tilde\vartheta + \int_\Omega \tilde u_j\Theta_0 \partial_{x_j}\tilde\vartheta \qquad \forall\,\phi\in V_0. \label{tempdiff2}
\end{align}
Using (H1) and (H4), we deduce the inequalities
\begin{align}
&\sigma\nu_1 \int_\Omega \vert \nabla\tilde{\mathbf{u}}\vert^2 dx \leq I_1,  \label{fluiddiff3}\\
&\kappa\int_\Omega \vert \nabla \tilde\vartheta\vert^2 \leq I_2, \label{tempdiff3}
\end{align}
where
\begin{align}
I_1&=-\int_\Omega \left[\nu(\vartheta^1 + \Theta_0) - \nu(\vartheta^2 + \Theta_0)\right](\tau_{ij}(x,\varepsilon(\mathbf{u}^2)) \varepsilon_{ij}(\tilde{\mathbf{u}})dx\nonumber\\
&-\sigma\int_\Omega \left[\nu(\vartheta^1 + \Theta_0) - \nu(\vartheta^2 + \Theta_0)\right] \nabla \mathbf{u}^2\cdot\nabla\tilde{\mathbf{u}}dx\nonumber\\
&+ \int_\Omega (\tilde{u}_j u^1_i + u^2_j \tilde{u}_i) \partial_{x_j} \tilde{u}_i dx,
\label{term1}\\
&I_2 = -\int_\Omega \tilde u_j(\vartheta^2 - \Theta_0)\partial_{x_j}\tilde\vartheta. \label{term2}
\end{align}
Observe now that, recalling (H1), thanks to H\"{o}lder's inequality we have
\begin{align}
I_1 &\leq \Vert \nu^\prime\Vert_{L^\infty(\mathbb{R})}\Vert \tilde \vartheta \Vert_{L^6(\Omega)}\left(\Vert \tau(\cdot,\varepsilon(\mathbf{u}^2))\Vert_{\textbf{L}^3(\Omega)} \Vert \varepsilon(\tilde{\mathbf{u}})\Vert_{\textbf{L}^2(\Omega)} + \sigma N \Vert \nabla \tilde{\mathbf{u}}\Vert_{\textbf{L}^2(\Omega)}\right)\nonumber\\
&+ (\Vert \mathbf{u}^1 \Vert_{\textbf{L}^4(\Omega)} + \Vert \mathbf{u}^2 \Vert_{\textbf{L}^4(\Omega)}) \Vert \tilde{\mathbf{u}}\Vert_{\textbf{L}^4(\Omega)}\Vert \nabla \tilde{\mathbf{u}}\Vert_{\textbf{L}^2(\Omega)}.
\label{estterm1}
\end{align}
Here we have $N=\Vert \nabla \mathbf{u}^2\Vert_{\textbf{L}^3(\Omega)}$.

On the other hand, owing to (H3), we can find  $C_3=C_3(\tau_2)>0$ such that
\begin{align}
\nonumber\Vert \tau(\cdot,\varepsilon(\mathbf{u}^2))\Vert_{\textbf{L}^3(\Omega)} &\leq C(\tau_1) \left(1+\Vert \varepsilon(\mathbf{u}^2)\Vert^{p-1}_{\textbf{L}^{3p-3}(\Omega)}\right)\\&\leq C(\tau_1)\left(1+\Vert \varepsilon(\mathbf{u}^2)\Vert^{p-1}_{\textbf{L}^{3}(\Omega)}\right)\leq {C_3(\tau_1)\left(1+N^{p-1}\right)},
\label{estterm11}
\end{align}
for any $p\in(1,2].$ Moreover, we know that there exists a positive constant $C_4$ depending on $\tau_1$, $\Vert\mathbf{f}\Vert_{\textbf{W}^{-1,p^\prime}(\Omega)}$ and $\nu_1$ such that
\begin{equation}
\Vert \varepsilon(\mathbf{u}^k)\Vert_{\textbf{L}^{p}(\Omega)} + \sqrt{\sigma} \Vert \nabla\mathbf{u}^k\Vert_{\textbf{L}^{2}(\Omega)}
\leq C_4, \quad k=1,2. \label{boundvel}
\end{equation}
Indeed, we can take $\mathbf{v}=\mathbf{u}^k$ in \eqref{fluidapps} written for $(\mathbf{u}^k,\vartheta^k)$  and using (H1), (H3), (H5), and Young's inequality.
Thus, using Poincar\'{e}'s and Korn's inequalities and the continuous embedding $\textbf{V}^2_{div} \hookrightarrow \textbf{L}^4(\Omega)^3$, we get
\begin{align}
\nonumber I_1 &\leq \Vert \nu^\prime\Vert_{L^\infty(\mathbb{R})}\Vert \tilde \vartheta \Vert_{L^6(\Omega)}\\&\nonumber \times\left({C_6}\left(1+{N^{p-1}}\right)\Vert \varepsilon(\tilde{\mathbf{u}})\Vert_{\textbf{L}^2(\Omega)}
+ \sigma N\Vert \nabla \tilde{\mathbf{u}}\Vert_{\textbf{L}^2(\Omega)}\right)
\\&+ \frac{C_7}{\sqrt{\sigma}}\Vert \nabla \tilde{\mathbf{u}}\Vert^2_{\textbf{L}^2(\Omega)},
\label{estterm12}
\end{align}
where $C_6$ and $C_7$ depend on $C_4$, on the Poincar\'{e} constant and on the constant of the embedding quoted above (therefore they depend on $\Omega$).

Regarding $I_2$, we have
\begin{equation}
I_2 \leq \Vert \tilde{\mathbf{u}}\Vert_{\textbf{L}^3_{div}} (\Vert\vartheta^2\Vert_{L^6(\Omega)} + \Vert \Theta_0\Vert_{L^6(\Omega)}) \Vert \nabla \tilde\vartheta \Vert. \label{estterm2}
\end{equation}
Taking $\phi=\vartheta^2$ in \eqref{tempapps} written for $(\mathbf{u}^2,\vartheta^2)$ we get
\begin{equation}
\kappa \Vert\nabla \vartheta^2 \Vert^2 \leq \Vert g\Vert_{V^\prime}\Vert \vartheta^2 \Vert_{H^1(\Omega)}
+ \Vert \mathbf{u}^2\Vert_{\textbf{L}^3_{div}} \Vert \Theta_0\Vert_{L^6(\Omega)}\Vert\nabla\vartheta^2\Vert.
\label{f}
\end{equation}
Thus, using Poincar\'{e}'s, we can find $C_8=C_8(\Omega)>0$ such that
\begin{equation*}
\kappa \Vert\nabla \vartheta^2 \Vert^2 \leq \left(C_8\Vert g\Vert_{V^\prime}
+ \Vert \mathbf{u}^2\Vert_{\textbf{L}^3_{div}}\Vert \Theta_0\Vert_{L^6(\Omega)}\right)\Vert\nabla\vartheta^2\Vert
\end{equation*}
and Young's inequality give
\begin{equation*}
\frac{\kappa}{2} \Vert\nabla \vartheta^2 \Vert^2 \leq \frac{1}{2\kappa} \left(C_7\Vert g\Vert_{V^\prime}
+ \Vert \mathbf{u}^2\Vert_{\textbf{L}^3_{div}}\Vert \Theta_0\Vert_{L^6(\Omega)}\right)^2.
\end{equation*}
Thus, recalling \eqref{boundvel}, the continuous embedding $\textbf{V}^2_{div}\hookrightarrow \textbf{L}^{3}(\Omega)$, Korn's and Poincar\'{e}'s inequalities,
we can find a constant $C_9>0$ depending on $C_4$, $\Omega$, $\tau_1$, $\tau_2$, and $\Vert\mathbf{f}\Vert_{\textbf{W}^{-1,p^\prime}(\Omega)}$ such that
\begin{equation*}
\Vert \vartheta^2 \Vert_{V_0} \leq \frac{C_9}{\kappa}\left(\Vert g\Vert_{V^\prime} + \frac{\Vert \Theta_0\Vert_{V}}{\sqrt{\sigma}}\right).
\end{equation*}
Hence, from \eqref{estterm2} we infer
\begin{equation}
I_2 \leq \frac{C_{10}}{\kappa}\left(\Vert g\Vert_{V^\prime} + \frac{\Vert \Theta_0\Vert_{V}}{\sqrt{\sigma}}\right) \Vert\tilde{\mathbf{u}}\Vert_{\textbf{L}^3_{div}} \Vert \nabla \tilde\vartheta \Vert, \label{estterm21}
\end{equation}
where $C_{10}>0$ has the same dependencies as $C_9$ does. Then, using Young's inequality, from \eqref{tempdiff3} we obtain
\begin{equation}
\frac{\kappa}{2} \Vert \nabla\tilde\vartheta \Vert^2_{H^3}\leq \frac{C^2_{10}}{2\kappa^2} \left(\Vert g\Vert_{V^\prime} + \frac{\Vert \Theta_0\Vert_{V}}{\sqrt{\sigma}}\right)^2\Vert\tilde{\mathbf{u}}\Vert^2_{\textbf{L}^3_{div}}. \label{tempdiff4}
\end{equation}
Therefore, on account of \eqref{estterm21}, from \eqref{estterm12} Korn's and Poincar\'e's inequalities and Sobolev embeddings, we deduce
\begin{align}
I_1 &\leq C_{11} \frac{\Vert \nu^\prime\Vert_{L^\infty(\mathbb{R})}}{\kappa^{3/2}} \left(\Vert g\Vert_{V^\prime} + \frac{\Vert \Theta_0\Vert_{V}}{\sqrt{\sigma}}\right) \left(1+{N^{p-1}}+
\sigma N\right)\Vert \nabla \tilde{\mathbf{u}}\Vert^2_{\textbf{L}^2(\Omega)}\nonumber\\
&+ \frac{C_7}{\sqrt{\sigma}}\Vert \nabla \tilde{\mathbf{u}}\Vert^2_{\textbf{L}^2(\Omega)}.
\label{estterm13}
\end{align}
Here $C_{11}>0$ has the same dependencies as the other constants.

Combining \eqref{fluiddiff3} with \eqref{estterm13} we eventually get
\begin{align*}
&\left[\sigma\nu_1 - C_{11}\frac{\Vert \nu^\prime\Vert_{L^\infty(\mathbb{R})}}{\kappa^{3/2}}\left(\Vert g\Vert_{V^\prime} + \frac{\Vert \Theta_0\Vert_{V}}{\sqrt{\sigma}}\right)\left(1+{N^{p-1}}+\sigma N\right)
+ \frac{C_7}{\sqrt{\sigma}}\right] \\&\times\Vert \nabla\tilde{\mathbf{u}}\Vert^2_{\textbf{L}^2(\Omega)}\leq 0,
\end{align*}
which eventually yields uniqueness provided that \eqref{uniqcond} holds with $M_1=C_{11}$ and $M_2=C_7$.

Let us now consider the proof of Theorem \ref{uniquebis}. This corresponds to the case when $\tau$ is defined as in \eqref{Carreau2}, $\sigma=0$ and there is no convective term $(\textbf{u}\cdot \nabla)\textbf{u}$. First notice that the weak formulation for $\tilde{\mathbf{u}}$ is again \eqref{fluiddiff}-\eqref{tempdiff} (without the convective term in \eqref{fluiddiff}), since the test functions $\phi\in V_0$ are sufficiently regular also in the case of $d=3$ (differently from what observed in Remark \ref{test}). Indeed, this is possible {by the regularity assumed for $\mathbf{u}$ and thanks to the embedding} $\textbf{W}^{1,p}(\Omega)\hookrightarrow\textbf{L}^3(\Omega)$ for $p\geq 3/2$. 

In this situation we cannot exploit \eqref{fluiddiff3}, but we exploit a more refined estimate of the first term in \eqref{fluiddiff2}. We can suppose either $d=2$ or $d=3$. In particular, {assuming  $\tau$ to be given by \eqref{Carreau2}}, by means of Lemma \ref{lemma:Carreau}, exploiting H\"older's inequality with a negative exponent, we get 
\begin{align*}
&\int_\Omega \nu(\vartheta^1 + \Theta_0) \left[(\tau_{ij}(x,\varepsilon(\mathbf{u}^1))-  \tau_{ij}(x,\varepsilon(\mathbf{u}^2))\right]\varepsilon_{ij}(\tilde{\mathbf{u}})dx\\&\geq C\nu_1\left(1+\Vert \textbf{u}^1\Vert_{\textbf{W}^{1,p}(\Omega)}+\Vert \textbf{u}^2\Vert_{\textbf{W}^{1,p}(\Omega)}\right)^{p-2}\Vert\tilde{\textbf{u}}\Vert_{\textbf{W}^{1,p}(\Omega)}^2\geq C_{12}\Vert\tilde{\textbf{u}}\Vert_{\textbf{W}^{1,p}(\Omega)}^2,
\end{align*}
thanks to the fact that, by standard inequalities, for $k=1,2$,
\begin{eqnarray}\label{stab:result_vel}
 \rVert   \mathbf{u}^k\rVert_{\textbf{W}^{1,p}(\Omega)} \leq {C}\|\mathbf{f}\|_{\textbf{L}^{p'}(\Omega)}.
\end{eqnarray}
{Note that $C_{12}>0$ also depends on $\nu_1$.}
We can estimate the remaining terms in $I_1$ by means of H\"older's inequality, Sobolev embeddings and (H3),
\begin{align}
I_1&=-\int_\Omega \left[\nu(\vartheta^1 + \Theta_0) - \nu(\vartheta^2 + \Theta_0)\right]\tau_{ij}(x,\varepsilon(\mathbf{u}^2)) \varepsilon_{ij}(\tilde{\mathbf{u}})dx\nonumber\\&\leq\nonumber C\Vert\tau(\cdot,\varepsilon(\mathbf{u}^2))\Vert_{\textbf{L}^\alpha(\Omega)}\Vert \varepsilon(\tilde{\textbf{u}})\Vert_{\textbf{L}^p(\Omega)}\Vert\vartheta \Vert_{\textbf{L}^\beta(\Omega)}\\&\leq \frac{C_{12}}{2}\Vert\tilde{\textbf{u}}\Vert_{\textbf{L}^p(\Omega)^d}^2+C_{13}\left(1+\Vert\textbf{u}^2\Vert^{2p-2}_{\textbf{W}^{1,\alpha(p-1)}(\Omega)}\right)\Vert\nabla\vartheta\Vert^2,
\label{term1bis}
\end{align}
where, for $d=2$, $\alpha=\frac{q}{p-1}$ and $\beta=(1-1/\alpha-1/p)^{-1}$, with $q$ defined in the assumptions,  whereas, for $d=3$, we set $\alpha=\frac{6p}{5p-6}$ and $\beta=6$.
{Concerning the temperature, we have (see \eqref{term2})}
\begin{equation}
I_2 \leq \Vert \tilde{\mathbf{u}}\Vert_{\textbf{L}^\gamma(\Omega)} (\Vert\vartheta^2\Vert_{L^t(\Omega)} + \Vert \Theta_0\Vert_{L^t(\Omega)}) \Vert \nabla \tilde\vartheta \Vert, \label{estterm2a}
\end{equation}
where $\gamma=\frac{2p}{2-p}$, $t=\frac{2\gamma}{\gamma-2}$ for $d=2$ and $\gamma=3$, $t=6$ for $d=3$. Moreover, taking again $\phi=\vartheta^2$ in \eqref{tempapps} written for $(\mathbf{u}^2,\vartheta^2)$ we get
\begin{equation}
\kappa \Vert\nabla \vartheta^2 \Vert^2 \leq \Vert g\Vert_{V^\prime}\Vert \vartheta^2 \Vert_{H^1(\Omega)}
+ \Vert \mathbf{u}^2\Vert_{\textbf{L}(\Omega)^\gamma }\Vert \Theta_0\Vert_{L^t(\Omega)}\Vert\nabla\vartheta^2\Vert,
\label{f2}
\end{equation}
for the same exponents $\gamma,t$. Indeed, with this choice of $\gamma$ we have, for $d=2$, the embedding $\textbf{W}^{1,p}(\Omega)\hookrightarrow \textbf{L}^\gamma(\Omega)$ for any $p\in(1,2)$ and thus also $\Vert\textbf{u}^2\Vert_{\textbf{L}^\gamma(\Omega)}\leq C$ as it is needed in this estimate. Notice that also for $d=3$, when $p\geq3/2$ we get $\textbf{W}^{1,p}(\Omega)\hookrightarrow \textbf{L}^3(\Omega)$ and thus  $\Vert\textbf{u}^2\Vert_{\textbf{L}^3(\Omega)}\leq C$. Exploiting then the embedding $H^1(\Omega)\hookrightarrow L^t(\Omega)$ for any $t\in [2,\infty)$ if $d=2$, for $t=6$ if $d=3$, we get, similarly to \eqref{tempdiff4}, 
\begin{align}
\frac{\kappa}{2} \Vert \nabla\tilde\vartheta \Vert^2&\leq \frac{C^2_{10}}{2\kappa^2} \left(\Vert g\Vert_{V^\prime} + {\Vert \Theta_0\Vert_{V}}\right)^2\Vert\tilde{\mathbf{u}}\Vert^2_{\textbf{L}^\gamma(\Omega)}, \label{tempdiff5}
\end{align}
Therefore, for $p\in(1,2)$ in the case $d=2$ and for $p\in[3/2,2)$ when $d=3$, we get 
\begin{align}
\frac{\kappa}{2} \Vert \nabla\tilde\vartheta \Vert^2&\leq \frac{C_{14}}{\kappa^2} \left(\Vert g\Vert_{V^\prime} + {\Vert \Theta_0\Vert_{V}}\right)^2\Vert\tilde{\textbf{u}}\Vert_{\textbf{W}^{1,p}(\Omega)}^2. \label{tempdiff6}
\end{align}
Putting together these results we obtain in the end 
\begin{align*}
 \Vert\tilde{\textbf{u}}\Vert_{\textbf{W}^{1,p}(\Omega)}^2\left(\frac{C_{12}}{2}-\frac{C_{13}C_{14}}{\kappa^3} \left(\Vert g\Vert_{V^\prime} + {\Vert \Theta_0\Vert_{V}}\right)^2\left(1+\Vert\textbf{u}^2\Vert^{2p-2}_{\textbf{W}^{1,\alpha(p-1)}(\Omega)}\right)\right)\leq 0,
\end{align*}
from which we deduce uniqueness provided that \eqref{cond} holds with $M_3=C_{12}$ and $M_4=C_{13}C_{14}$. The proof is finished.
\begin{comment}
\begin{oss}
If $d=2$ and $p\in [4/3,2)$ then we have the embedding $W^{1,p}(\Omega)^2 \hookrightarrow L^4(\Omega)^2$. Moreover, in place of \eqref{boundvel} we have
\begin{equation*}
\sqrt{\nu_1}\Vert \varepsilon(\mathbf{u}^k))\Vert_{L^{p}(\Omega)^{2\times 2}} + \sqrt{\nu_1\sigma} \Vert \nabla\mathbf{u}^k\Vert_{L^{2}(\Omega)^{2\times 2}}
\leq C_4, \quad k=1,2.
\end{equation*}
Thus inequality \eqref{estterm12} becomes
\begin{equation*}
I_1 \leq \Vert \nu^\prime\Vert_{L^\infty(\mathbb{R})}\Vert \tilde \vartheta \Vert_{L^6(\Omega)} \left(C_6\Vert \varepsilon(\tilde{\mathbf{u}})\Vert_{L^2(\Omega)^{2\times 2}}
+ \sigma N\Vert \nabla \tilde{\mathbf{u}}\Vert_{L^2(\Omega)^{2\times 2}}\right)
+ \frac{C_7}{\sqrt{\nu_1}}\Vert \nabla \tilde{\mathbf{u}}\Vert^2_{L^2(\Omega)^{2\times 2}}.
\end{equation*}
As a consequence, condition \eqref{uniqcond} takes the form
\begin{equation*}
\sigma\nu_1 > M_1\frac{\Vert \nu^\prime\Vert_{L^\infty(\mathbb{R})}}{\kappa^{3/2}}\left(\Vert g\Vert_{V^\prime} + \frac{\Vert \Theta_0\Vert_{V}}{\nu_1}\right)(1+\sigma N)
+\frac{M_2}{\sqrt{\nu_1}}.
\end{equation*}
\end{oss}
\end{comment}
\subsubsection{Proof of Theorem \ref{unique2}}
First of all we prove the following
\begin{lemma}
Let the assumptions of Theorem \ref{unique} hold, together with \eqref{g}. Then a solution $\vartheta_\sigma$ given by Theorem \ref{existreg}, for any $\sigma>0$, belongs to $L^\infty(\Omega)$.
\end{lemma}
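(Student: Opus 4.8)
The plan is to prove the uniform bound via the classical Stampacchia (De Giorgi) truncation method applied to the temperature identity \eqref{tempapp}. First I would collect the integrability of the data supplied by the hypotheses: since $\theta\in H^{3/2}(\partial\Omega)$ (recall \eqref{g}) the lift satisfies $\Theta_0\in H^2(\Omega)$, so that $\nabla\Theta_0\in\textbf{L}^6(\Omega)$ in $d=3$; moreover, under the assumptions of Theorem \ref{unique}, $\mathbf{u}_\sigma\in\textbf{W}^{1,q}(\Omega)$ with $q>3$ gives $\mathbf{u}_\sigma\in\textbf{L}^\infty(\Omega)$, while $g\in H$. Hence the scalar datum
\[
G:=g-\mathbf{u}_\sigma\cdot\nabla\Theta_0\in L^2(\Omega)\subset L^m(\Omega),\qquad m=2>\frac{d}{2}=\frac32,
\]
and $\vartheta_\sigma$ is, in the weak sense, a solution of $-\kappa\Delta\vartheta_\sigma+\mathbf{u}_\sigma\cdot\nabla\vartheta_\sigma=G$ with homogeneous Dirichlet datum and a \emph{divergence-free} drift $\mathbf{u}_\sigma$, exactly the setting in which boundedness is classical.

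The central step is the truncated energy estimate. For $k\ge0$ set $A_k:=\{x\in\Omega:\vartheta_\sigma(x)>k\}$ and insert the admissible test function $\phi=(\vartheta_\sigma-k)^+\in V_0$ into \eqref{tempapp}. The key observation is that the convective term vanishes: introducing the primitive $\Psi(s)=\int_0^s t\,\mathbf{1}_{\{t>k\}}\,dt$ one has $\vartheta_\sigma\,\partial_{x_j}\phi=\partial_{x_j}\Psi(\vartheta_\sigma)$, and since $\Psi(\vartheta_\sigma)\in W^{1,3/2}_0(\Omega)$ and $\text{div}\,\mathbf{u}_\sigma=0$,
\[
\int_\Omega (u_\sigma)_j\,\vartheta_\sigma\,\partial_{x_j}\phi\,dx=\int_\Omega (u_\sigma)_j\,\partial_{x_j}\Psi(\vartheta_\sigma)\,dx=0.
\]
Integrating by parts the lift term in the same fashion ($\text{div}\,\mathbf{u}_\sigma=0$ once more), the identity \eqref{tempapp} collapses to $\kappa\Vert\nabla\phi\Vert^2=\int_{A_k}G\,\phi\,dx$. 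By Hölder's inequality and the Sobolev embedding $V_0\hookrightarrow L^6(\Omega)$,
\[
\kappa\Vert\nabla\phi\Vert^2\le\Vert G\Vert_{L^2(\Omega)}\,\Vert\phi\Vert_{L^6(\Omega)}\,|A_k|^{1/3}\le C_S\,\Vert G\Vert_{L^2(\Omega)}\,\Vert\nabla\phi\Vert\,|A_k|^{1/3},
\]
so that $\Vert(\vartheta_\sigma-k)^+\Vert_{L^6(\Omega)}\le (C/\kappa)\Vert G\Vert_{L^2(\Omega)}\,|A_k|^{1/3}$.

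Then I would run the Stampacchia iteration. For $h>k\ge0$ one has $(h-k)\,|A_h|^{1/6}\le\Vert(\vartheta_\sigma-k)^+\Vert_{L^6(\Omega)}$, whence
\[
|A_h|\le\frac{C}{\kappa^6}\,\Vert G\Vert_{L^2(\Omega)}^6\,\frac{|A_k|^2}{(h-k)^6},\qquad h>k\ge0.
\]
Because the exponent $2$ of $|A_k|$ is strictly larger than $1$, the classical Stampacchia lemma on nonincreasing functions yields $|A_{k_0}|=0$ for a finite $k_0$ depending only on $\kappa$, $\Vert G\Vert_{L^2(\Omega)}$, $|\Omega|$ and the Sobolev constant; hence $\vartheta_\sigma\le k_0$ a.e. in $\Omega$. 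Applying the identical argument to $-\vartheta_\sigma$, which solves the analogous problem with $G$ replaced by $-G$ and with the \emph{same} divergence-free drift, provides the lower bound, so that $\vartheta_\sigma\in L^\infty(\Omega)$.

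The main obstacle, and the only genuinely non-routine point, is the cancellation of the convective term: one must ensure that $\Psi(\vartheta_\sigma)$ is admissible for integration by parts against the divergence-free field $\mathbf{u}_\sigma$. I would justify this by a density argument, exploiting $\mathbf{u}_\sigma\in\textbf{L}^3_{div}$ (indeed $\mathbf{u}_\sigma\in\textbf{L}^\infty(\Omega)$) together with $\Psi(\vartheta_\sigma)\in W^{1,3/2}_0(\Omega)$, so that the drift integral against smooth solenoidal approximations vanishes and passes to the limit. Once this is settled, the drift contributes nothing and the remaining estimates are precisely those of a Poisson problem with $L^2$ right-hand side, hence entirely standard.
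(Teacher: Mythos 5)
Your argument is correct, but it follows a genuinely different route from the paper. The paper does not truncate: it reads \eqref{Sy10} as a Poisson problem with right-hand side $g-\mathbf{u}_\sigma\cdot\nabla\vartheta_\sigma-\mathbf{u}_\sigma\cdot\nabla\Theta_0$, invokes elliptic $H^2$ regularity, controls the convective term by $\Vert\mathbf{u}_\sigma\Vert_{\textbf{L}^6(\Omega)}\Vert\nabla\vartheta_\sigma\Vert_{\textbf{L}^3(\Omega)}\leq C\Vert\nabla\mathbf{u}_\sigma\Vert\,\Vert\nabla\vartheta_\sigma\Vert^{1/2}\Vert\vartheta_\sigma\Vert^{1/2}_{H^2(\Omega)}$ via Gagliardo--Nirenberg, absorbs the half-power of $\Vert\vartheta_\sigma\Vert_{H^2(\Omega)}$ by Young, and concludes $\vartheta_\sigma\in H^2(\Omega)\hookrightarrow L^\infty(\Omega)$. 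Your De Giorgi--Stampacchia iteration buys robustness: it needs no elliptic $H^2$ theory (hence less boundary smoothness), treats $d=2$ and $d=3$ uniformly, and in fact does not require $\mathbf{u}_\sigma\in\textbf{L}^\infty(\Omega)$ at all --- the embedding $\textbf{V}^2_{div}\hookrightarrow\textbf{L}^6(\Omega)$ already gives $\mathbf{u}_\sigma\cdot\nabla\Theta_0\in L^3(\Omega)\subset L^2(\Omega)$, so your appeal to the $\textbf{W}^{1,q}(\Omega)$, $q>d$, hypothesis of Theorem \ref{unique} is superfluous; your treatment of the drift cancellation via the primitive $\Psi$ and density of $\mathcal{V}$ is the right justification. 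What you lose is the explicit quantitative bound \eqref{theta} on $\Vert\vartheta_\sigma\Vert_{H^2(\Omega)}$ in terms of $\Vert\nabla\mathbf{u}_\sigma\Vert$, $\Vert g\Vert$ and $\Vert\theta\Vert_{H^{3/2}(\partial\Omega)}$: the lemma as stated only asks for $L^\infty$, but the paper's proof of Theorem \ref{unique2} reuses that $H^2$ estimate (and the resulting explicit $L^\infty$ bound \eqref{theta5}) in the constants entering the uniqueness condition \eqref{uniqcond2}, so the regularity route is the one that feeds the rest of the section. For the statement in isolation, your proof is complete and somewhat more elementary in its hypotheses.
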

\begin{proof}
We recall that by Theorem \ref{existreg} we have $\textbf{u}_\sigma\in \textbf{V}_{div}^2$ and $\vartheta_\sigma\in V_0$. We concentrate on \eqref{Sy10}. By a classical elliptic regularity result, we get 
$$
\Vert\vartheta_\sigma\Vert_{H^2(\Omega)}\leq \frac{1}{\kappa}\left(\Vert g\Vert+\Vert\textbf{u}_\sigma\cdot \nabla\vartheta_\sigma\Vert+\Vert\textbf{u}_\sigma\cdot \nabla\Theta_0\Vert\right),
$$
but, by H\"{o}lder and Sobolev-Gagliardo-Nirenberg's inequalities, using also Sobolev embeddings, we have
$$
\Vert\textbf{u}_\sigma\cdot \nabla\vartheta_\sigma\Vert\leq \Vert\textbf{u}_\sigma\Vert_{\textbf{L}^6(\Omega)} \Vert \nabla\vartheta_\sigma\Vert_{\textbf{L}^3(\Omega)}\leq C\Vert\nabla\textbf{u}_\sigma\Vert\Vert \nabla\vartheta_\sigma\Vert^{1/2}\Vert \vartheta_\sigma\Vert^{1/2}_{H^2(\Omega)}.
$$
Analogously, recalling the properties of the lift operator, i.e., $\Vert\Theta_0\Vert_{H^2(\Omega)}\leq C\Vert \theta\Vert_{H^{3/2}(\partial\Omega)}$,
we find
\begin{align*}
   & \Vert\textbf{u}_\sigma\cdot \nabla\Theta_0\Vert\leq \Vert\textbf{u}_\sigma\Vert_{\textbf{L}^6(\Omega)} \Vert \nabla\Theta_0\Vert_{\textbf{L}^3(\Omega)}\\&\leq C\Vert\nabla\textbf{u}_\sigma\Vert\Vert \Theta_0\Vert_{H^2(\Omega)}\leq C\Vert\nabla\textbf{u}_\sigma\Vert\Vert \theta\Vert_{H^{3/2}(\partial\Omega)}.
\end{align*}
Therefore, by Young's inequality, we immediately deduce 
$$
\Vert\vartheta_\sigma\Vert_{H^2(\Omega)}\leq \frac{C}{\kappa^2}\Vert\nabla\textbf{u}_\sigma\Vert^2\Vert\nabla\vartheta_\sigma\Vert+\frac{1}{2}\Vert \vartheta_\sigma\Vert_{H^2(\Omega)}+\frac{C}{\kappa}\Vert\nabla\textbf{u}_\sigma\Vert\Vert \theta\Vert_{H^{3/2}(\partial\Omega)}+\frac{1}{\kappa}\Vert g\Vert, 
$$
i.e.,
\begin{equation}
\Vert\vartheta_\sigma\Vert_{H^2(\Omega)}\leq \frac{C}{\kappa^2}\Vert\nabla\textbf{u}_\sigma\Vert^2\Vert\nabla\vartheta_\sigma\Vert+\frac{C}{\kappa}\Vert\nabla\textbf{u}_\sigma\Vert\Vert \theta\Vert_{H^{3/2}(\partial\Omega)}, 
\label{theta}
\end{equation}
meaning that $\vartheta_\sigma\in H^2(\Omega)$ (and thus also $\vartheta_\sigma+\Theta_0\in H^2(\Omega)$). In particular, this implies that $\vartheta_\sigma$ is bounded in $L^\infty(\Omega)$.
\end{proof}

{We now prove Theorem \ref{unique2}.
Suppose that $(\mathbf{u}^k,\vartheta^k) \in V^r_{div} \times V_0$, $k=1,2$, satisfy the following system
	\begin{align}
	&\int_\Omega \left[\nu(\vartheta + \Theta_0)\left( \tau_{ij}(x,\varepsilon(\mathbf{u}))  
    +\sigma \vert \varepsilon(\mathbf{u}))\vert^{r-2} \varepsilon_{ij}(\mathbf{u})\right)\right]\varepsilon_{ij}(\mathbf{v})dx \nonumber\\
	& -\int_\Omega u_j u_i \partial_{x_j} v_i dx = \langle \mathbf{f}, \mathbf{v} \rangle \qquad \forall\, \mathbf{v}\in \textbf{V}^r_{div} \label{fluidapps2}\\
	&\int_\Omega \kappa \nabla \vartheta \cdot \nabla \phi + \int_\Omega u_j\vartheta \partial_{x_j}\phi = \langle g,\phi \rangle
	+ \int_\Omega u_j\Theta_0 \partial_{x_j}\phi \qquad \forall\,\phi\in V_0. \label{tempapps2}
	\end{align}
	Suppose also, as before, $r=2$ and $d=3$. Then, as in the proof of Theorem \ref{unique}, set $\tilde{\mathbf{u}}=\mathbf{u}^1 -\mathbf{u}^2$ and $\tilde{\vartheta}= \vartheta^1 - \vartheta^2$ and observe that $(\tilde{\mathbf{u}},\tilde\vartheta)$
	satisfies
	\begin{align}
	&\int_\Omega \nu(\vartheta^1 + \Theta_0) \left[(\tau_{ij}(x,\varepsilon(\mathbf{u}^1))-  \tau_{ij}(x,\varepsilon(\mathbf{u}^2))\right]\varepsilon_{ij}(\mathbf{v})dx +\sigma \int_\Omega \nu(\vartheta^1 + \Theta_0)\varepsilon_{ij}(\tilde{\mathbf{u}})\varepsilon_{ij}(\mathbf{v})dx \nonumber\\
	&=-\int_\Omega \left[\nu(\vartheta^1 + \Theta_0) - \nu(\vartheta^2 + \Theta_0)\right](\tau_{ij}(x,\varepsilon(\mathbf{u}^2)) \varepsilon_{ij}(\mathbf{v})dx\nonumber\\
	&-\sigma\int_\Omega \left[\nu(\vartheta^1 + \Theta_0) - \nu(\vartheta^2 + \Theta_0)\right] \varepsilon_{ij}({\mathbf{u}^2})\varepsilon_{ij}(\mathbf{v})dx\nonumber\\
	&+ \int_\Omega (\tilde{u}_j u^1_i + u^2_j \tilde{u}_i) \partial_{x_j} v_i dx
	\qquad \forall\, \mathbf{v}\in \textbf{V}^r_{div} \label{fluiddifff}\\
	&\int_\Omega \kappa \nabla \tilde\vartheta \cdot \nabla \phi + \int_\Omega u^1_j\tilde\vartheta \partial_{x_j}\phi =
	-\int_\Omega \tilde u_j\vartheta^2 \partial_{x_j}\phi + \int_\Omega \tilde u_j\Theta_0 \partial_{x_j}\phi \qquad \forall\,\phi\in V_0. \label{tempdifff}
	\end{align}
	Take $\mathbf{v}=\textbf{A}^{-1}\tilde{\mathbf{u}}$ in \eqref{fluiddiff}, and $\phi=\tilde\vartheta$ in \eqref{tempdiff}. Recalling that $\mathbf{u}_1$ is divergence free, this gives
	\begin{align}
	&\int_\Omega \nu(\vartheta^1 + \Theta_0) \left[(\tau_{ij}(x,\varepsilon(\mathbf{u}^1))-  \tau_{ij}(x,\varepsilon(\mathbf{u}^2))\right]\varepsilon_{ij}(\textbf{A}^{-1}\tilde{\mathbf{u}})dx\nonumber
	\\&+\sigma \int_\Omega \nu(\vartheta^1 + \Theta_0)\varepsilon_{ij}(\tilde{\mathbf{u}})\varepsilon_{ij}(\textbf{A}^{-1}\tilde{\mathbf{u}})dx \nonumber\\
	&=-\int_\Omega \left[\nu(\vartheta^1 + \Theta_0) - \nu(\vartheta^2 + \Theta_0)\right](\tau_{ij}(x,\varepsilon(\mathbf{u}^2)) \varepsilon_{ij}(\textbf{A}^{-1}\tilde{\mathbf{u}})dx\nonumber\\
	&-\sigma\int_\Omega \left[\nu(\vartheta^1 + \Theta_0) - \nu(\vartheta^2 + \Theta_0)\right] \varepsilon_{ij}({\mathbf{u}^2})\varepsilon_{ij}(\textbf{A}^{-1}\tilde{\mathbf{u}})dx\nonumber\\
	&+ \int_\Omega (\tilde{u}_j u^1_i + u^2_j \tilde{u}_i) \partial_{x_j} (\textbf{A}^{-1}\tilde{\textbf{u}})_i dx
	\qquad \forall\, \mathbf{v}\in \textbf{V}^r_{div}, \label{fluiddifff2}\\
	&\kappa\int_\Omega \vert \nabla \tilde\vartheta\vert^2 =
	-\int_\Omega \tilde u_j\vartheta^2 \partial_{x_j}\tilde\vartheta + \int_\Omega \tilde u_j\Theta_0 \partial_{x_j}\tilde\vartheta \qquad \forall\,\phi\in V_0. \label{tempdifff2}
	\end{align}
	From now on, for the sake of simplicity, we will also use the notation $D\textbf{u}$ to indicate the symmetrized strain tensor. We observe that 
	\begin{align*}
	&\sigma \int_\Omega \nu(\vartheta^1 + \Theta_0)\varepsilon_{ij}(\tilde{\mathbf{u}})\varepsilon_{ij}(\textbf{A}^{-1}\tilde{\mathbf{u}})dx=\sigma (\nu(\vartheta^1 + \Theta_0)D\tilde{\textbf{u}},D\textbf{A}^{-1}\tilde{\mathbf{u}})\\&=\sigma (\nu(\vartheta^1 + \Theta_0)\nabla\tilde{\textbf{u}},D\textbf{A}^{-1}\tilde{\mathbf{u}})
	\end{align*}
	and
	\begin{align*}
	&\sigma (\nu(\vartheta^1 + \Theta_0)\nabla\tilde{\textbf{u}},D\textbf{A}^{-1}\tilde{\mathbf{u}})=-\sigma(\text{div}(\nu(\vartheta^1 + \Theta_0)D\textbf{A}^{-1}\tilde{\mathbf{u}}),\tilde{\mathbf{u}})\\&=-\frac{\sigma}{2}(\nu(\vartheta^1 + \Theta_0)\Delta\textbf{A}^{-1}\tilde{\mathbf{u}},\tilde{\mathbf{u}})-\sigma(D\textbf{A}^{-1}\tilde{\mathbf{u}}\nabla\Theta_1\nu^\prime(\vartheta^1 + \Theta_0),\tilde{\mathbf{u}}).
	\end{align*}
	Now by the classical properties of $\textbf{A}$ (see, e.g., Ref. \refcite{Giorginitemam}, Appendix B, we know that, for a suitable $\tilde{p}\in H^1(\Omega)\cap L^2_{(0)}(\Omega)$, we have
	$$
	-\Delta\textbf{A}^{-1}\tilde{\mathbf{u}}+\nabla \tilde{p}=\tilde{\mathbf{u}}\qquad\text{a.e. in }\Omega,
	$$
	and
	$$
	\Vert\textbf{A}^{-1}\tilde{\mathbf{u}}\Vert_{\textbf{H}^2(\Omega)}\leq C\Vert\tilde{\mathbf{u}}\Vert,\qquad \Vert \tilde{p}\Vert_{H^1(\Omega)}\leq C\Vert\tilde{\mathbf{u}}\Vert.
	$$
	Thus, by (H1), we get
	\begin{align*}
	&-\frac{1}{2}(\nu(\vartheta^1 + \Theta_0)\Delta\textbf{A}^{-1}\tilde{\mathbf{u}},\tilde{\mathbf{u}})=\frac{1}{2}(\nu(\vartheta^1 + \Theta_0)\tilde{\mathbf{u}},\tilde{\mathbf{u}})-\frac{1}{2}(\nu(\vartheta^1 + \Theta_0)\nabla \tilde{p},\tilde{\mathbf{u}})\\&\geq \frac{\nu_1}{2}\Vert\tilde{\mathbf{u}}\Vert^2-\frac{1}{2}(\nu(\vartheta^1 + \Theta_0)\nabla \tilde{p},\tilde{\mathbf{u}}).
	\end{align*}
	Using also (H4), we deduce the inequalities
	\begin{align}
	&\frac{\sigma\nu_1}{2} \int_\Omega \vert \tilde{\mathbf{u}}\vert^2 dx \leq I_1, \label{fluiddifff3}\\
	&\kappa\int_\Omega \vert \nabla \tilde\vartheta\vert^2 \leq I_2, \label{tempdifff3}
	\end{align}
	where
	\begin{align}
	I_1&=-\int_\Omega \left[\nu(\vartheta^1 + \Theta_0) - \nu(\vartheta^2 + \Theta_0)\right](\tau_{ij}(x,\varepsilon(\mathbf{u}^2)) \varepsilon_{ij}(\textbf{A}^{-1}\tilde{\mathbf{u}})dx\nonumber\\
	&-\sigma\int_\Omega \left[\nu(\vartheta^1 + \Theta_0) - \nu(\vartheta^2 + \Theta_0)\right] \varepsilon_{ij}({\mathbf{u}^2})\varepsilon_{ij}(\textbf{A}^{-1}\tilde{\mathbf{u}})dx\nonumber\\
	&+ \int_\Omega (\tilde{u}_j u^1_i + u^2_j \tilde{u}_i) \partial_{x_j} (\textbf{A}^{-1}\tilde{\textbf{u}})_i dx\nonumber\\&+\frac{\sigma}{2}(\nu(\vartheta^1 + \Theta_0)\nabla p,\tilde{\mathbf{u}})+\sigma(D\textbf{A}^{-1}\tilde{\mathbf{u}}\nabla\Theta_1\nu^\prime(\vartheta^1 + \Theta_0),\tilde{\mathbf{u}}),
	\label{terms1}\\
	&I_2 = -\int_\Omega \tilde u_j(\vartheta^2 - \Theta_0)\partial_{x_j}\tilde\vartheta. \label{terms2}
	\end{align}
	Observe now that, recalling (H1), thanks to H\"{o}lder's and Korn's inequalities, the properties of $\textbf{A}$ and the Sobolev embedding $\textbf{V}^2_{div} \hookrightarrow \textbf{L}^q(\Omega)$, $q\in[2,6]$, we have
	\begin{align}
	&I_1 \leq \Vert \nu^\prime\Vert_{L^\infty(\mathbb{R})}\Vert \tilde \vartheta \Vert_{L^6(\Omega)}\nonumber\\&\times\left(\Vert (\tau(\cdot,\varepsilon(\mathbf{u}^2))\Vert_{\textbf{L}^2(\Omega)} \Vert \nabla\textbf{A}^{-1}\tilde{\mathbf{u}}\Vert_{\textbf{L}^3(\Omega)} + \sigma\Vert\nabla\textbf{u}^2\Vert_{\textbf{L}^2(\Omega)}\Vert \nabla \textbf{A}^{-1}\tilde{\mathbf{u}}\Vert_{\textbf{L}^3(\Omega)}\right)\nonumber\\
	&+ (\Vert \mathbf{u}^1 \Vert_{\textbf{L}^4(\Omega)} + \Vert \mathbf{u}^2 \Vert_{\textbf{L}^4(\Omega)}) \Vert \tilde{\mathbf{u}}\Vert_{\textbf{L}^2(\Omega)}\Vert \nabla\textbf{A}^{-1}\tilde{\mathbf{u}}\Vert_{\textbf{L}^4(\Omega)}\nonumber\\&+\sigma C\left(\Vert\nu\Vert_{L^\infty(\R)}+\Vert \nu^\prime\Vert_{L^\infty(\mathbb{R})}\Vert\nabla\Theta_1\Vert_{\textbf{L}^4(\Omega)}\right)\Vert\tilde{\mathbf{u}}\Vert^2\nonumber\\&\leq C_{15}\Vert \nu^\prime\Vert_{L^\infty(\mathbb{R})}\Vert \tilde \vartheta \Vert_{L^6(\Omega)}\left(\Vert \tau(\cdot,\varepsilon(\mathbf{u}^2))\Vert_{\textbf{L}^3(\Omega)} \Vert \tilde{\mathbf{u}}\Vert+ \sigma\Vert\nabla\textbf{u}^2\Vert_{\textbf{L}^2(\Omega)}\Vert \tilde{\mathbf{u}}\Vert\right)\nonumber\\
	&+ (\Vert \mathbf{u}^1 \Vert_{\textbf{L}^4(\Omega)} + \Vert \mathbf{u}^2 \Vert_{\textbf{L}^4(\Omega)})\Vert \tilde{\mathbf{u}}\Vert^2\nonumber\\&+\sigma C_{16}\left(\Vert\nu\Vert_{L^\infty(\R)}+\Vert \nu^\prime\Vert_{L^\infty(\mathbb{R})}\Vert\nabla\Theta_1\Vert_{\textbf{L}^4(\Omega)}\right)\Vert\tilde{\mathbf{u}}\Vert^2\nonumber.
	%\label{esttermm1}
	\end{align}
	%Here we have $N=\Vert \nabla \mathbf{u}^2\Vert_{L^3(\Omega)^{3\times 3}}$.
	On the other hand, owing to (H3), since $p\in(1,2)$, we can find  $C=C(\tau_2)>0$ such that
	\begin{align}
	&\Vert \tau(\cdot,\varepsilon(\mathbf{u}^2))\Vert_{\textbf{L}^2(\Omega)} \leq C \left(1+\Vert \varepsilon(\mathbf{u}^2)\Vert^{p-1}_{\textbf{L}^{2p-2}(\Omega)}\right)\leq C\left(1+\Vert \varepsilon(\mathbf{u}^2)\Vert^{p-1}_{\textbf{L}^{2}(\Omega)}\right).
	\label{esttermm11}
	\end{align}
	Also, we know that there exists a positive constant $C_{17}$ depending on $\tau_1$, $\nu_1$ and $\Vert\mathbf{f}\Vert_{\textbf{W}^{-1,p^\prime}(\Omega)}$ such that
	\begin{equation}
	\Vert \varepsilon(\mathbf{u}^k)\Vert_{\textbf{L}^{p}(\Omega)} + \sqrt{\sigma} \Vert \nabla\mathbf{u}^k\Vert_{\textbf{L}^{2}(\Omega)}
	\leq C_{17}, \quad k=1,2. \label{boundvel2}
	\end{equation}
	Indeed, we can take $\mathbf{v}=\mathbf{u}^k$ in \eqref{fluidapps} written for $(\mathbf{u}^k,\vartheta^k)$  and using (H1), (H3), (H5), and Young's inequality we get \eqref{boundvel2}. This means that the right hand side of \eqref{esttermm11} is bounded (note that this is possible because $\sigma>0$).
	Thus, using Poincar\'{e}'s inequality and the continuous embedding $\textbf{V}^2_{div} \hookrightarrow \textbf{L}^4(\Omega)$, we get
	\begin{align}
	I_1 &\leq C_{15}\Vert \nu^\prime\Vert_{L^\infty(\mathbb{R})}\Vert \tilde \vartheta \Vert_{L^6(\Omega)}\left(\Vert \tau(\cdot,\varepsilon(\mathbf{u}^2))\Vert_{\textbf{L}^3(\Omega)} \Vert \tilde{\mathbf{u}}\Vert+ \sigma\Vert\nabla\textbf{u}^2\Vert_{\textbf{L}^2(\Omega)}\Vert \tilde{\mathbf{u}}\Vert\right)\nonumber\\
	&+ (\Vert \mathbf{u}^1 \Vert_{\textbf{L}^4(\Omega)} + \Vert \mathbf{u}^2 \Vert_{\textbf{L}^4(\Omega)})\Vert \tilde{\mathbf{u}}\Vert^2\nonumber\\&+\sigma C_{16}\left(\Vert\nu\Vert_{L^\infty(\R)}+\Vert \nu^\prime\Vert_{L^\infty(\mathbb{R})}\Vert\nabla\Theta_1\Vert_{\textbf{L}^4(\Omega)}\right)\Vert\tilde{\mathbf{u}}\Vert^2\nonumber\\&\leq C_{18}\Vert \nu^\prime\Vert_{L^\infty(\mathbb{R})}\Vert \tilde \vartheta \Vert_{L^6(\Omega)}\left( \left(1+\frac{1}{(\sqrt{\sigma})^{p-1}}\right)\Vert \tilde{\mathbf{u}}\Vert+ \frac{\sigma}{\sqrt{\sigma}}\Vert \tilde{\mathbf{u}}\Vert\right)\nonumber\\&
	+ \frac{C_{19}}{\sqrt{\sigma}}\Vert \tilde{\mathbf{u}}\Vert^2+\sigma C_{16}\left(\Vert\nu\Vert_{L^\infty(\R)}+\Vert \nu^\prime\Vert_{L^\infty(\mathbb{R})}\Vert\nabla\Theta_1\Vert_{\textbf{L}^4(\Omega)}\right)\Vert\tilde{\mathbf{u}}\Vert^2,
	\label{esttermm12}
	\end{align}
where $C_{18},\ C_{19}$ also depend on the Poincar\'{e} constant and on the constants of the embeddings mentioned above (in particular, they depend on $\Omega$). Clearly they also depend on $\Vert\textbf{f}\Vert_{\textbf{W}^{-1,p^\prime}(\Omega)}$ and $\nu_1$.
	
	Regarding $I_2$, we have
	\begin{equation}
	I_2 \leq \Vert \tilde{\mathbf{u}}\Vert (\Vert\vartheta^2\Vert_{L^\infty(\Omega)} + \Vert \Theta_0\Vert_{L^\infty(\Omega)}) \Vert \nabla \tilde\vartheta \Vert. \label{esttermm2}
	\end{equation}
	Considering \eqref{theta} written for $(\mathbf{u}^2,\vartheta^2)$ we get
	\begin{equation}
\Vert\vartheta^2\Vert_{H^2(\Omega)}\leq \frac{C_{20}}{\kappa^2}\Vert\nabla\textbf{u}^2\Vert^2\Vert\nabla\vartheta^2\Vert+\frac{C_{21}}{\kappa}\Vert\nabla\textbf{u}^2\Vert\Vert \theta\Vert_{H^{3/2}(\partial\Omega)}+\frac{1}{\kappa}\Vert g\Vert.
\label{theta2}
\end{equation}
	Thus, by \eqref{boundvel2}, we can find $C_{22}>0$ such that
	\begin{equation}
\Vert\vartheta^2\Vert_{H^2(\Omega)}\leq C_{22}\left(\frac{1}{\kappa^2{\sigma}}\Vert\nabla\vartheta^2\Vert+\frac{1}{\kappa\sqrt{\sigma}}\Vert \theta\Vert_{H^{3/2}(\partial\Omega)}+\frac{1}{\kappa}\Vert g\Vert\right).
\label{theta3}
\end{equation}
Moreover, taking $\phi=\vartheta^2$ in \eqref{tempapps} written for $(\mathbf{u}^2,\vartheta^2)$ we get
	\begin{equation*}
	\kappa \Vert\nabla \vartheta^2 \Vert^2 \leq \Vert g\Vert_{V^\prime}\Vert \vartheta^2 \Vert_{H^1(\Omega)}
	+ \Vert \mathbf{u}^2\Vert_{\textbf{L}^3(\Omega)} \Vert \Theta_0\Vert_{L^6(\Omega)}\Vert\nabla\vartheta^2\Vert
	\end{equation*}
	and using Poincar\'{e}'s inequality, we find
	\begin{equation*}
	\kappa \Vert\nabla \vartheta^2 \Vert^2 \leq \left(C\Vert g\Vert_{V^\prime}
	+ \Vert \mathbf{u}^2\Vert_{\textbf{L}^3(\Omega)}\Vert \Theta_0\Vert_{L^6(\Omega)}\right)\Vert\nabla\vartheta^2\Vert.
	\end{equation*}
	Then Young's inequality gives
	\begin{equation*}
	\frac{\kappa}{2} \Vert\nabla \vartheta^2 \Vert^2 \leq \frac{1}{2\kappa} \left(C\Vert g\Vert_{V^\prime}
	+ \Vert \mathbf{u}^2\Vert_{\textbf{\textbf{L}}^3(\Omega)}\Vert \Theta_0\Vert_{L^6(\Omega)}\right)^2
	\end{equation*}
	so that, recalling \eqref{boundvel2},
	we can find a constant $C_{23}>0$ depending on $\Omega$, $\tau_1$, $\tau_2$, $\nu_1$ and $\Vert\mathbf{f}\Vert_{\textbf{W}^{-1,p^\prime}(\Omega)}$ such that
	\begin{equation*}
	\Vert \vartheta^2 \Vert_{V_0} \leq \frac{C_{23}}{\kappa}\left(\Vert g\Vert_{V^\prime} + \frac{\Vert \Theta_0\Vert_{V}}{\sqrt{\sigma}}\right).
	\end{equation*}
	Hence, from \eqref{theta3} we infer
	\begin{equation}
\Vert\vartheta^2\Vert_{H^2(\Omega)}\leq C_{24}\left(\frac{1}{\kappa^3\sigma}\left(\Vert g\Vert_{V^\prime} + \frac{\Vert \Theta_0\Vert_{V}}{\sqrt{\sigma}}\right)+\frac{1}{\kappa\sqrt{\sigma}}\Vert \theta\Vert_{H^{3/2}(\partial\Omega)}+\frac{1}{\kappa}\Vert g\Vert\right).
\label{theta4}
\end{equation}
	 Analogously, being $H^2(\Omega)\hookrightarrow L^\infty(\Omega)$, we deduce
		\begin{equation}
	\Vert\vartheta^2\Vert_{L^\infty(\Omega)}\leq C_{25}\left(\frac{1}{\kappa^3\sigma}\left(\Vert g\Vert_{V^\prime} + \frac{\Vert \Theta_0\Vert_{V}}{\sqrt{\sigma}}\right)+\frac{1}{\kappa\sqrt{\sigma}}\Vert \theta\Vert_{H^{3/2}(\partial\Omega)}+\frac{1}{\kappa}\Vert g\Vert\right).
	\label{theta5}
	\end{equation}

	Therefore, on account of \eqref{theta4}, recalling that $\Vert\Theta_0\Vert_{L^\infty(\Omega)}\leq C\Vert \theta\Vert_{H^{3/2}(\partial\Omega)}$, we deduce
	\begin{equation}
I_2 \leq C_{26}\Vert \tilde{\mathbf{u}}\Vert \left(\frac{1}{\kappa^3\sigma}\left(\Vert g\Vert_{V^\prime} + \frac{\Vert \Theta_0\Vert_{V}}{\sqrt{\sigma}}\right)+\frac{1}{\kappa\sqrt{\sigma}}\Vert \theta\Vert_{H^{3/2}(\partial\Omega)}+\frac{1}{\kappa}\Vert g\Vert\right)\Vert \nabla \tilde\vartheta \Vert. \label{esttermm3}
\end{equation}
 Then, on account of \eqref{tempdifff3}, using Young's inequality we deduce
\begin{align}
 \Vert \nabla \tilde\vartheta \Vert^2\leq \frac{C_{27}}{\kappa^2}\Vert \tilde{\mathbf{u}}\Vert^2 \left(\frac{1}{\kappa^3\sigma}\left(\Vert g\Vert_{V^\prime} + \frac{\Vert \Theta_0\Vert_{V}}{\sqrt{\sigma}}\right)+\frac{1}{\kappa\sqrt{\sigma}}\Vert \theta\Vert_{H^{3/2}(\partial\Omega)}+\frac{1}{\kappa}\Vert g\Vert\right)^2.
 \label{I2}
\end{align}
	Combining \eqref{estterm12} with \eqref{I2}, recalling that $\Theta^1=\vartheta^1+\Theta_0$, thanks to Young's and Poincaré's inequalities (note that \eqref{theta4} is still valid for $\vartheta^1$) and Sobolev embeddings, we get
		\begin{align}
	&I_1 \leq C_{28}\Vert \nu^\prime\Vert_{L^\infty(\mathbb{R})}^2\Vert \tilde \vartheta \Vert_{L^6(\Omega)}^2\left( 1+\frac{1}{(\sqrt{\sigma})^{p-1}}+ \frac{\sigma}{\sqrt{\sigma}}\right)^2\nonumber\\&\nonumber
	+ C_{29}\left(1+\frac{1}{\sqrt{\sigma}}+\sigma \left(\Vert\nu\Vert_{L^\infty(\R)}\nonumber+\Vert \nu^\prime\Vert_{L^\infty(\mathbb{R})}\left(\Vert\vartheta_1\Vert_{H^2(\Omega)}+\Vert \theta\Vert_{H^{3/2}(\partial\Omega)}\right)\right)\right)\Vert\tilde{\mathbf{u}}\Vert^2\\&\leq   C_{28}\Vert \nu^\prime\Vert_{L^\infty(\mathbb{R})}^2\Vert \tilde \vartheta \Vert_{L^6(\Omega)}^2\left( 1+\frac{1}{(\sqrt{\sigma})^{p-1}}+ \sqrt{\sigma}\right)^2\nonumber\nonumber
	+ C_{30}\left(1+\frac{1}{\sqrt{\sigma}}+\sigma \left(\Vert\nu\Vert_{L^\infty(\R)}\right.\right.\\&\left.\left.+\Vert \nu^\prime\Vert_{L^\infty(\mathbb{R})}\left(\left(\frac{1}{\kappa^3\sigma}\left(\Vert g\Vert_{V^\prime} + \frac{\Vert \Theta_0\Vert_{V}}{\sqrt{\sigma}}\right)+\frac{1}{\kappa\sqrt{\sigma}}\Vert \theta\Vert_{H^{3/2}(\partial\Omega)}\right.\right.\right.\right.\nonumber\\&\nonumber\left.\left.\left.\left.+\frac{1}{\kappa}\Vert g\Vert\right)+\Vert \theta\Vert_{H^{3/2}(\partial\Omega)}\right)\right)\right)\Vert\tilde{\mathbf{u}}\Vert^2\\&\leq  \frac{C_{31}}{\kappa^2}\Vert \nu^\prime\Vert_{L^\infty(\mathbb{R})}^2\left( 1+\frac{1}{(\sqrt{\sigma})^{p-1}}+ \sqrt{\sigma}\right)^2\nonumber\\&\nonumber\times \left(\frac{1}{\kappa^3\sigma}\left(\Vert g\Vert_{V^\prime} + \frac{\Vert \Theta_0\Vert_{V}}{\sqrt{\sigma}}\right)+\frac{1}{\kappa\sqrt{\sigma}}\Vert \theta\Vert_{H^{3/2}(\partial\Omega)}+\frac{1}{\kappa}\Vert g\Vert\right)^2\Vert \tilde{\mathbf{u}}\Vert^2\\&\nonumber
	+ C_{30}\left(1+\frac{1}{\sqrt{\sigma}}+\sigma \left(\Vert\nu\Vert_{L^\infty(\R)}\right.\right.\\&\left.\left.+\Vert \nu^\prime\Vert_{L^\infty(\mathbb{R})}\left(\left(\frac{1}{\kappa^3\sigma}\left(\Vert g\Vert_{V^\prime} + \frac{\Vert \Theta_0\Vert_{V}}{\sqrt{\sigma}}\right)+\frac{1}{\kappa\sqrt{\sigma}}\Vert \theta\Vert_{H^{3/2}(\partial\Omega)}\right.\right.\right.\right.\nonumber\\&\left.\left.\left.\left.+\frac{1}{\kappa}\Vert g\Vert\right)+\Vert \theta\Vert_{H^{3/2}(\partial\Omega)}\right)\right)\right)\Vert\tilde{\mathbf{u}}\Vert^2.
	\label{esttermm12bis}
	\end{align}
	In conclusion, recalling that $\Vert\Theta_0\Vert_V\leq \tilde{C}\Vert \theta\Vert_{H^{1/2}(\partial\Omega)}$ and owing to  \eqref{fluiddifff3}, the condition
			\begin{align}
	&\left[-\frac{\sigma\nu_1}{2}+\frac{C_{31}}{\kappa^2}\Vert \nu^\prime\Vert_{L^\infty(\mathbb{R})}^2\left( 1+\frac{1}{(\sqrt{\sigma})^{p-1}}+ \sqrt{\sigma}\right)^2\nonumber\right.\\&\left.\nonumber\times \left(\frac{1}{\kappa^3\sigma}\left(\Vert g\Vert_{V^\prime} + \frac{\tilde{C}\Vert \theta\Vert_{H^{1/2}(\partial\Omega)}}{\sqrt{\sigma}}\right)+\frac{1}{\kappa\sqrt{\sigma}}\Vert \theta\Vert_{H^{3/2}(\partial\Omega)}+\frac{1}{\kappa}\Vert g\Vert\right)^2\right.\\&\left.\nonumber
	+ C_{30}\left(1+\frac{1}{\sqrt{\sigma}}+\sigma \left(\Vert\nu\Vert_{L^\infty(\R)}\right.\right.\right.\\&\left.\left.\left.+\Vert \nu^\prime\Vert_{L^\infty(\mathbb{R})}\left(\left(\frac{1}{\kappa^3\sigma}\left(\Vert g\Vert_{V^\prime} + \frac{\tilde{C}\Vert \theta\Vert_{H^{1/2}(\partial\Omega)}}{\sqrt{\sigma}}\right)+\frac{1}{\kappa\sqrt{\sigma}}\Vert \theta\Vert_{H^{3/2}(\partial\Omega)}\right.\right.\right.\right.\nonumber\right.\\&\left.\left.\left.\left.\left.+\frac{1}{\kappa}\Vert g\Vert\right)+\Vert \theta\Vert_{H^{3/2}(\partial\Omega)}\right)\right)\right)\right]\Vert\tilde{\mathbf{u}}\Vert^2\geq0,
	\label{esttermm12tris}
	\end{align}
	yields uniqueness provided that \eqref{uniqcond2} holds with $M_1=C_{31}$, $M_2=\tilde{C}$ and $M_3=C_{30}$.
	}
	
	%%%%%%%%
	
	\section{Finite element approximation: the Carreau law case }
	\label{numerical}
	\subsection{Preliminaries}
 In this section, we consider the finite element approximation of a specific instance of the boundary value problem \eqref{Sy04}-\eqref{Sy07}, again under the same assumptions \textbf{(H1)}-\textbf{(H7)} namely we drop the convective term $(\mathbf{u}\cdot \nabla)\mathbf{u}$ and we focus on the relevant case of non-Newtonian fluids governed by the Carreau law, i.e. $\tau(\varepsilon(\mathbf{u}))=\eta(\vert\varepsilon(\mathbf{u})\vert^2)\varepsilon(\mathbf{u})$ with
\begin{eqnarray}
&&\eta(z):=\eta_\infty + (\eta_0-\eta_\infty)(1+\lambda z)^{(p-2)/2}\label{carreau_law}\\
&&\eta_0>\eta_\infty\geq 0, \lambda>0 \text{~and~} p\in (1,2).\label{carreau_law_1}
\end{eqnarray}

For the sequel, it is instrumental to recall the following result (cf. Lemmas 3.1 and 3.2 of Ref. \refcite{Barrett-Liu:1993})
where $\vert \cdot \vert$ denotes the euclidean matrix norm, i.e. for $\mathbf{K}\in \R^{n\times n}$ real matrix, $\vert \textbf{K}\vert^2=\sum_{i,j=1}^n (K_{i,j})^2$. 
\begin{lemma}\label{lemma:Carreau}
Let $\eta$ obey the Carreau law \eqref{carreau_law} with $p\in (1,2)$. Then there exists positive constants $C_i$, $i=1,\ldots,3$ such that for all symmetric matrices $\mathbf{K},\mathbf{L}\in \mathbb{R}^{2\time 2}$
there holds
\begin{eqnarray}
 \vert \eta(\vert \mathbf{K}\vert^2)\mathbf{K}-\eta(\vert \mathbf{L}\vert^2)\mathbf{L}\vert &\leq& C_1 \vert \mathbf{K} - \mathbf{L}\vert \qquad \text{if~} \eta_\infty \not=0\nonumber\\
 \vert \eta(\vert \mathbf{K}\vert^2)\mathbf{K}-\eta(\vert \mathbf{L}\vert^2)\mathbf{L}\vert &\leq& C_3 \vert \mathbf{K} - \mathbf{L}\vert^{p-1} 
 \qquad \text{if~} \eta_\infty =0.\nonumber
 \end{eqnarray} 
Moreover it holds
\begin{align*}
&\sum_{i,j=1}^2 
\left(\eta(\vert \mathbf{K}\vert^2)K_{i,j}-\eta(\vert \mathbf{L}\vert^2)L_{i,j}\right) (K_{i,j}-L_{i,j}) \\&\geq 
\{\eta_\infty + C_2[1 + \vert \mathbf{K}\vert + \vert \mathbf{L}\vert]^{p-2}\}\vert \mathbf{K} - \mathbf{L}\vert^2.&\nonumber
\end{align*}
\end{lemma}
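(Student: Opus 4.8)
The plan is to reduce everything to pointwise bounds on the Jacobian of the map $\mathbf{S}(\mathbf{K}):=\eta(|\mathbf{K}|^2)\mathbf{K}$ and then to integrate along segments. Since $\eta$ is smooth on $[0,\infty)$, a direct differentiation gives, for symmetric $\mathbf{K},\mathbf{B}$,
\begin{equation*}
\mathbf{B}\cdot D\mathbf{S}(\mathbf{K})[\mathbf{B}]=\eta(|\mathbf{K}|^2)|\mathbf{B}|^2+2\eta'(|\mathbf{K}|^2)(\mathbf{K}\cdot\mathbf{B})^2,
\end{equation*}
where $\eta'(z)=(\eta_0-\eta_\infty)\frac{p-2}{2}\lambda(1+\lambda z)^{(p-4)/2}<0$ because $p<2$. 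First I would record the elementary scalar facts that drive the whole argument: writing $z=|\mathbf{K}|^2$,
\begin{equation*}
\eta(z)+2z\eta'(z)=\eta_\infty+(\eta_0-\eta_\infty)(1+\lambda z)^{(p-4)/2}\bigl(1+(p-1)\lambda z\bigr),
\end{equation*}
together with $\eta(z)\le\eta_0$ and $2z|\eta'(z)|\le(\eta_0-\eta_\infty)(2-p)$, the latter from $\lambda z\le1+\lambda z$.

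For the coercivity estimate (the third inequality) I would bound $(\mathbf{K}\cdot\mathbf{B})^2\le|\mathbf{K}|^2|\mathbf{B}|^2$ and use $\eta'<0$ to get $\mathbf{B}\cdot D\mathbf{S}(\mathbf{K})[\mathbf{B}]\ge\bigl(\eta(z)+2z\eta'(z)\bigr)|\mathbf{B}|^2$. Since $\frac{1+(p-1)\lambda z}{1+\lambda z}$ is decreasing in $z$ with infimum $p-1>0$, the bracket is $\ge\eta_\infty+(\eta_0-\eta_\infty)(p-1)(1+\lambda z)^{(p-2)/2}$, and comparing $(1+\lambda|\mathbf{K}|^2)^{1/2}$ with $1+|\mathbf{K}|$ yields the pointwise lower bound $\eta_\infty+C(1+|\mathbf{K}|)^{p-2}$. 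I would then write
\begin{equation*}
(\mathbf{S}(\mathbf{K})-\mathbf{S}(\mathbf{L}))\cdot(\mathbf{K}-\mathbf{L})=\int_0^1(\mathbf{K}-\mathbf{L})\cdot D\mathbf{S}(\mathbf{A}(t))[\mathbf{K}-\mathbf{L}]\,dt,\qquad \mathbf{A}(t)=\mathbf{L}+t(\mathbf{K}-\mathbf{L}),
\end{equation*}
and conclude using $|\mathbf{A}(t)|\le|\mathbf{K}|+|\mathbf{L}|$ together with $p-2<0$, so that $(1+|\mathbf{A}(t)|)^{p-2}\ge(1+|\mathbf{K}|+|\mathbf{L}|)^{p-2}$; here the negative exponent works in our favour, so this is the easy direction.

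The two upper bounds follow the same integration scheme but require bounds on $\|D\mathbf{S}\|$. From the facts above, $\|D\mathbf{S}(\mathbf{K})\|\le\eta(|\mathbf{K}|^2)+2|\mathbf{K}|^2|\eta'(|\mathbf{K}|^2)|\le\eta_0+(\eta_0-\eta_\infty)(2-p)=:C_1$, uniformly in $\mathbf{K}$; integrating along $\mathbf{A}(t)$ immediately gives the Lipschitz estimate, which is all that is claimed when $\eta_\infty\ne0$. The genuinely delicate case is $\eta_\infty=0$, where I would instead exploit the decay $\|D\mathbf{S}(\mathbf{K})\|\le C(1+|\mathbf{K}|)^{p-2}$ and prove the sharp bound $|\mathbf{S}(\mathbf{K})-\mathbf{S}(\mathbf{L})|\le C(1+|\mathbf{K}|+|\mathbf{L}|)^{p-2}|\mathbf{K}-\mathbf{L}|$; the Hölder estimate then drops out from $|\mathbf{K}-\mathbf{L}|^{2-p}\le(1+|\mathbf{K}|+|\mathbf{L}|)^{2-p}$, since $|\mathbf{K}-\mathbf{L}|\le1+|\mathbf{K}|+|\mathbf{L}|$ and $2-p>0$. \emph{This sharp upper bound is the main obstacle}: the integral $\int_0^1(1+|\mathbf{A}(t)|)^{p-2}\,dt$ cannot be bounded by $(1+|\mathbf{K}|+|\mathbf{L}|)^{p-2}$ term by term, because along a segment joining two large but nearly antipodal matrices the integrand is close to $1$ near the midpoint, where $\mathbf{A}(t)$ passes near the origin. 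The resolution is to estimate the line integral directly: reducing $t\mapsto|\mathbf{A}(t)|^2$ to a scalar quadratic and integrating, one checks that the region where $|\mathbf{A}(t)|$ is small is confined to a short $t$-interval, so the integral is indeed comparable to $(1+|\mathbf{K}|+|\mathbf{L}|)^{p-2}$. Equivalently, one may bypass the integral by invoking the classical monotone-field inequality $\bigl||a|^{p-2}a-|b|^{p-2}b\bigr|\le C_p|a-b|^{p-1}$ for $1<p<2$ and transferring it to the regularized field $(1+\lambda|\cdot|^2)^{(p-2)/2}(\cdot)$ by treating the small- and large-argument regimes separately (Lipschitz near the origin, the power-field estimate away from it), which is the route followed in \cite{Barrett-Liu:1993}.
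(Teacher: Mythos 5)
The paper does not actually prove Lemma \ref{lemma:Carreau}: it is recalled verbatim from \cite[Lemmas 3.1 and 3.2]{Barrett-Liu:1993}, so there is no in-house argument to compare against. Your proposal supplies a correct, essentially self-contained proof along the classical lines that the cited reference itself follows. All the pointwise computations check out: the identity for $\mathbf{B}\cdot D\mathbf{S}(\mathbf{K})[\mathbf{B}]$, the bounds $\eta(z)+2z\eta'(z)\ge \eta_\infty+(\eta_0-\eta_\infty)(p-1)(1+\lambda z)^{(p-2)/2}$ (using that $(1+(p-1)s)/(1+s)$ decreases to $p-1$) and $\|D\mathbf{S}(\mathbf{K})\|\le\eta(z)+2z|\eta'(z)|\le\eta_0+(\eta_0-\eta_\infty)(2-p)$, and the segment-integration step for the monotonicity estimate, where the negative exponent makes the pointwise bound $(1+|\mathbf{A}(t)|)^{p-2}\ge(1+|\mathbf{K}|+|\mathbf{L}|)^{p-2}$ immediate. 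You also correctly single out the one genuinely delicate point, namely that $\int_0^1(1+|\mathbf{A}(t)|)^{p-2}\,dt$ cannot be bounded by the value of the integrand at the endpoints; the resolution you indicate (the quadratic $t\mapsto|\mathbf{A}(t)|^2$ confines the set where $|\mathbf{A}(t)|$ is small to an interval of length comparable to $(1+|\mathbf{K}|+|\mathbf{L}|)/|\mathbf{K}-\mathbf{L}|$) is the standard Acerbi--Fusco-type line-integral lemma, valid exactly because $p-2>-1$, i.e. for the whole range $p\in(1,2)$ of the statement; this step is sketched rather than written out, but the idea is right and the alternative route you mention through $\bigl||a|^{p-2}a-|b|^{p-2}b\bigr|\le C|a-b|^{p-1}$ is equally legitimate. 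Two cosmetic remarks: your Lipschitz bound in fact holds for $\eta_\infty=0$ as well (the lemma simply does not claim it there), and the final passage from the sharp bound $C(1+|\mathbf{K}|+|\mathbf{L}|)^{p-2}|\mathbf{K}-\mathbf{L}|$ to the H\"older bound $C|\mathbf{K}-\mathbf{L}|^{p-1}$ via $|\mathbf{K}-\mathbf{L}|\le 1+|\mathbf{K}|+|\mathbf{L}|$ is exactly right.
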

We first recall the variational formulation of the continuous problem.  Let us choose: 
$$V = H^1(\Omega),~~ {V}_0 = H^1_{0}(\Omega),~~\mathbf{V} = \textbf{W}^{1,s}(\Omega),~~\mathbf{V}_0 = \textbf{W}^{1,s}_{0}(\Omega),$$
$$ Q = L^{s'}(\Omega),~~ Q_* = L^{s'}_0(\Omega) = \left\{q \in Q | \int_{\Omega} q = 0\right\},$$
where  $1/s'+1/s=1$  with $s=p$ if $\eta_\infty=0$ and $s=2$ otherwise.
We set $\|\cdot\|_{\mathbf{V}}:=\vert \cdot\vert_{\textbf{W}^{1,s}(\Omega)}$ and $\|\cdot\|_{Q}=\|\cdot\|_{Q_*}:=\| \cdot\|_{L^{s'}(\Omega)}$. 
Moreover, let us introduce the following forms
\begin{equation*}
\begin{aligned}
    &a_1(\vartheta, \mathbf{u} ;\mathbf{v}) = \int_{\Omega}2 \nu(\vartheta) \eta(\vert {\varepsilon}(\mathbf{u})\vert^2)\varepsilon(\mathbf{u}) : \varepsilon(\mathbf{v}), 
    \qquad a_2(\vartheta,\varrho) = \int_{\Omega} \kappa \nabla \vartheta \cdot \nabla \varrho,  
    \\&b(\mathbf{v},p) = -\int_{\Omega}p \nabla \cdot \mathbf{v},
    \qquad c(\mathbf{u},\vartheta, \varrho) = \int_{\Omega} (\mathbf{u}\cdot \nabla)\vartheta \varrho, 
    \\&f(\mathbf{v}) = \int_{\Omega} \mathbf{f} \cdot \mathbf{v},
    \qquad g(\varrho) = \int_{\Omega} g \varrho. 
    \end{aligned}
\end{equation*}
Since the velocity space is not divergence free, we rewrite the variational formulation of a weak solution given in Definition \ref{weaksol} as follows: let $p\in(1,2)$ if $d=2$ and $p\in(3/2,2)$ for $d=3$ { when $\eta_\infty=0$ and $p\in(1,2)$ for $d=3$ when $\eta_\infty=0$}. Find $(\mathbf{u},\pi,\vartheta) \in \mathbf{V}_0 \times Q_* \times V_0$ such that
\begin{align}  \label{eq::liftweak}
  &a_1(\vartheta+\Theta_0,\mathbf{u}; \mathbf{v}) + b(\mathbf{v},\pi) =f(\mathbf{v}), \qquad \\&
  b(\mathbf{u},q)=0,\label{eq::middlelift1} \qquad \\&
  a_2(\vartheta,\varrho) + c(\mathbf{u};\vartheta,\varrho)=g(\rho) - a_2(\Theta_0,\varrho)-c(\mathbf{u},\Theta_0,\varrho),
  \label{eq::variastokesnllift}
\end{align}
$\forall (\mathbf{v}, q ,\varrho) \in \mathbf{V}_0\times Q_* \times V_0$.
\begin{oss}
The weak formulation is well posed since, also in the case $\eta_\infty=0$, differently from the general case of Remark \ref{test}, we are considering a range of $p$'s for $d=2,3$ ensuring the embedding $\textbf{V}_0\hookrightarrow \textbf{L}^3(\Omega)$. Therefore $\varrho\in V_0$ is allowed as a test function in \eqref{eq::variastokesnllift}. 
\end{oss}
We collect well known results that will be employed in the following. 
First, the bilinear form $b(\mathbf{v},q)$ is continuous, that is there exists $M>0$ such that 
\begin{equation}
b(\mathbf{v},q) \leq M \lVert \mathbf{v} \rVert_{\mathbf{V}} \lVert q \rVert_{Q_*} \quad \forall (\mathbf{v},q) \in \mathbf{V} \times {Q_*}
\label{eq::bcont}
\end{equation}
and it satisfies the following compatibility condition\cite{brezziboffi,girault}: there exists $c=c(\Omega)$ such that
\begin{equation}
    c\lVert q \rVert_{Q_*} \leq \sup_{\mathbf{v}\in \mathbf{V}_0} \dfrac{b(\mathbf{v},q)}{\lVert \mathbf{v}\rVert_{\mathbf{V}}} \quad \forall q \in Q_*.
    \label{eq::bcompati}
\end{equation}
Second, the trilinear form $c(\mathbf{u},\theta, \varrho)$ satisfies the antisymmetry property when $\nabla \cdot \mathbf{u} = 0$. Indeed
\begin{equation}\label{eq:antisimmetry_of_c}
  \begin{aligned}
  c(\mathbf{u},\theta, \varrho) &= - c(\mathbf{u},\varrho, \theta),\\
  c(\mathbf{u},\theta, \theta) &= 0. 
  \end{aligned}
\end{equation}
%{ Moreover, for all $(\mathbf{u},\theta,\varrho) \in \mathbf{V}_0 \times V_0 \times V_0$, using the generalized H\"older inequality and the Sobolev embedding, it holds for $p\in (4d/(d+4),2]$
%\begin{equation}
% c(\mathbf{u},\theta, \varrho) \leq \lVert  \mathbf{u} \rVert_{L^4(\Omega)^d} \lVert \varrho \rVert_{L^4(\Omega)} \lVert \nabla \theta \rVert \leq \lVert \mathbf{u} \rVert_{W^{1,p}(\Omega)^d} \lVert \nabla \theta \rVert\lVert \nabla \varrho \rVert.
%\end{equation}
%
%La stima migliore dovrebbe essere la seguente: for $d=2$, for any $r>2$,
%\begin{equation}
% c(\mathbf{u},\theta, \varrho) \leq \lVert  \mathbf{u} \rVert_{L^r(\Omega)^d} \lVert \varrho \rVert_{L^{\frac{2r}{r-2}}(\Omega)} \lVert \nabla \theta \rVert \leq \lVert \mathbf{u} \rVert_{W^{1,p}(\Omega)^d} \lVert \nabla \theta \rVert\lVert \nabla \varrho \rVert,
%\end{equation}
%due to the embedding $H^1(\Omega)\hookrightarrow L^r(\Omega), \forall r\in[2,\infty)$. Therefore, being $r$ arbitrary, it is enough to use $r=\frac{2p}{2-p}>2$ to obtain the embedding: for any $p\in(1,2]$ the inequality is true.
%
%For $d=3$, we have at most
%\begin{equation}
% c(\mathbf{u},\theta, \varrho) \leq \lVert  \mathbf{u} \rVert_{L^3(\Omega)^d} \lVert \varrho \rVert_{L^{6}(\Omega)} \lVert \nabla \theta \rVert \leq \lVert \mathbf{u} \rVert_{W^{1,p}(\Omega)^d} \lVert \nabla \theta \rVert\lVert \nabla \varrho \rVert,
%\end{equation}
%due to the embedding $H^1(\Omega)\hookrightarrow L^6(\Omega)$. Therefore, we need $p\in[3/2,2]$.
%}

Moreover, for all $(\mathbf{u},\theta,\varrho) \in \mathbf{V}_0 \times V_0 \times V_0$, using the generalized H\"older inequality and the Sobolev embedding, 
\begin{equation}
 c(\mathbf{u},\theta, \varrho) \leq  \lVert \mathbf{u} \rVert_{\textbf{W}^{1,p}(\Omega)} \lVert \nabla \theta \rVert\lVert \nabla \varrho \rVert
\end{equation}
where $p\in(1,2)$ for $d=2$ and $p\in[3/2,2)$ for $d=3$.
Indeed, for $d=2$, for any $\iota>2$,
\begin{equation*}
 c(\mathbf{u},\theta, \varrho) \leq \lVert  \mathbf{u} \rVert_{\textbf{L}^\iota(\Omega)} \lVert \varrho \rVert_{L^{\frac{2\iota}{\iota-2}}(\Omega)} \lVert \nabla \theta \rVert \leq \lVert \mathbf{u} \rVert_{\textbf{W}^{1,p}(\Omega)} \lVert \nabla \theta \rVert\lVert \nabla \varrho \rVert,
\end{equation*}
due to the embedding $H^1(\Omega)\hookrightarrow L^\iota(\Omega), \forall \iota\in[2,\infty)$. Therefore, being $\iota$ arbitrary, it is enough to use $\iota=\frac{2p}{2-p}>2$ to obtain the embedding: for any $p\in(1,2)$ the inequality is true. For $d=3$, we have at most
\begin{equation*}
 c(\mathbf{u},\theta, \varrho) \leq \lVert  \mathbf{u} \rVert_{\textbf{L}^3(\Omega)} \lVert \varrho \rVert_{{L}^{6}(\Omega)} \lVert \nabla \theta \rVert \leq \lVert \mathbf{u} \rVert_{\textbf{W}^{1,p}(\Omega)} \lVert \nabla \theta \rVert\lVert \nabla \varrho \rVert,
\end{equation*}
due to the embedding $H^1(\Omega)\hookrightarrow L^6(\Omega)$. Therefore, we need $p\in[3/2,2)$.  
{Notice that this is coherent with the theoretical results (see, e.g., Remark \ref{validity}).}
\subsection{Definition of the discrete problem}
Let $\Omega\subset \mathbb{R}^d$, $d=2,3$ be a polygonal/polyedral domain. We introduce (see, e.g,. Refs. \refcite{BrennerScott,Ciarlet:2002}) a regular family of triangulations $\mathcal{T}_h$ of  $\Omega$ where $h$ denotes the maximum of the diameters of the elements  $K\in\mathcal{T}_h$. 

%Moreover, we introduce the following finite dimensional spaces 
%\begin{equation*}
%    \begin{aligned}
%    \mathbf{V}_h = \{\mathbf{v}_h \in C(\overline{\Omega})^2 | \forall K \in \mathcal{T}_h, \ \mathbf{v}_h|_{K} \in \mathbb{P}_{k+1}(K)^2\},\\
%    {Q}_h = \{{q}_h \in C(\overline{\Omega}) | \forall K \in \mathcal{T}_h, \ {q}_h|_{K} \in \mathbb{P}_{k}(K)\},\\
%    {H}_h = \{\varrho_h \in C(\overline{\Omega}) | \forall K \in \mathcal{T}_h, \ \varrho_h|_{K} \in \mathbb{P}_{k+1}(K)\},
%    \end{aligned}
%\end{equation*}
%with $k\geq1$. 

Let $\mathbf{V}_h\subset \mathbf{V}$, $ {Q}_h\subset Q_*$ and $V_h\subset V$ be finite dimensional spaces and we set
    $\mathbf{V}_{0,h} = \mathbf{V}_{h} \cap \mathbf{V}_{0} $,  ${V}_{0,h} = {V}_{h} \cap V_0$.
We assume that the following approximability properties hold:
\begin{itemize}
\item[(P1)] $\inf_{s_h \in V_h} \|\nabla (\theta - s_h)\| \to 0\quad$ as $h\to 0\quad \forall \theta \in {V}$, 
\item[(P2)] $\inf_{\mathbf{v}_h \in \mathbf{V}_h} \|\nabla (\mathbf{v} - \mathbf{v}_h)\|_{\mathbf{V}} \to 0\quad$ as $h\to 0\quad \forall \mathbf{v} \in {\mathbf{V}}$, 
\item[(P3)] $\inf_{q_h \in Q_h} \|\pi - q_h\|_{Q_*} \to 0\quad$ as $h\to 0\quad \forall \pi \in {Q_*}$.
\end{itemize}
Moreover, we assume (see, e.g., Refs. \refcite{brezziboffi,girault} for examples in the context of finite elements)  that the discrete spaces  $\mathbf{V}_h, Q_h$ are chosen so that there exists $\beta>0$ independent of $h$ such that
\begin{equation}
    \beta \lVert q_h \rVert_{Q_*} \leq \sup_{\mathbf{v}_h \in \mathbf{V}_{0,h}} \dfrac{b(\mathbf{v}_h,q_h)}{\lVert \mathbf{v}_h\rVert_{\mathbf{V}}} \quad \forall q_h \in Q_h.
    \label{eq::bcompatidis}
\end{equation}
We introduce the following discrete form  
\begin{equation}
    c_h(\mathbf{u}_h,\vartheta_h,\varrho_h) = \dfrac{1}{2}\left(c(\mathbf{u}_h,\vartheta_h,\varrho_h) - c(\mathbf{u}_h,\varrho_h,\vartheta_h)\right)
    \label{eq::antisim}
\end{equation}
to recover the antisymmetry property of its continuous counterpart (cf. \eqref{eq:antisimmetry_of_c}).
Similarly to the continuous case, it holds
\begin{equation}\label{c_h:cont}
 c_h(\mathbf{u}_h,\theta_h, \varrho_h)   \leq  \lVert \mathbf{u}_h \rVert_{\textbf{W}^{1,{q}}(\Omega)} \lVert \nabla \theta_h \rVert\lVert \nabla \varrho_h \rVert
\end{equation}
where $q\in(1,2)$ for $d=2$ and $q\in[3/2,2)$ for $d=3$.
{Let us denote, with a slight abuse of notation, by $\Theta_0\in V_h$  the $V$-projection on $V_h$ of the continuous lifting defined in \eqref{eq:Theta0}, in such a way that the control $\Vert \Theta_0\Vert_{V}\leq C\Vert \theta\Vert_{H^{1/ 2}(\Omega)}$ is still valid}.  Then  the finite dimensional approximation of problem \eqref{eq::liftweak}-\eqref{eq::variastokesnllift}, reads: find $(\mathbf{u}_h,\pi_h,\vartheta_h) \in \mathbf{V}_{0,h} \times Q_h\times V_{0,h}$ such that
\begin{align}\label{eq::liftweakdisc2}
&a_1(\vartheta_h+\Theta_0,\mathbf{u}_h;\mathbf{v}_h) + b(\mathbf{v}_h,\pi_h) = \mathbf{f}(\mathbf{v}_h) \\
&b(\mathbf{u}_h,q_h) = 0 \label{eq::middlelift2}\\
&a_2(\vartheta_h,\varrho_h) + c_h(\vartheta_h,\varrho_h;\mathbf{u}_h)=g(\varrho_h) - a_2(\Theta_0,\varrho_h)-c_h(\mathbf{u}_h,\Theta_0,\varrho_h)
\label{eq::variadisclift}
\end{align}
for all $(\mathbf{v}_h,q_h,\varrho_h) \in \mathbf{V}_{0,h}\times Q_h \times V_{0,h}$.
Let us first remark that in the case of $\eta_\infty=0$, which is the most interesting case, the existence of a solution to \eqref{eq::liftweakdisc2}-\eqref{eq::variadisclift} follows by adapting the proof of Theorem \ref{exist}, employing the pair $\textbf{V}_h^{div}\times V_h$, $\textbf{V}_h^{div}=\{\textbf{v}_h\in  \mathbf{V}_{0,h}: b(\mathbf{v}_h,q_h)=0 ~\forall q_h\in Q_h \}$, so that \eqref{eq::liftweakdisc2}-\eqref{eq::variadisclift} is equivalent to the weak formulation of Definition \ref{weaksol} on the spaces $\textbf{V}_h^{div}\times V_h$. Conditional uniqueness is then a consequence of Theorem \ref{uniquebis}, exploiting again the pair $\textbf{V}_h^{div}\times V_h$. Analogously, in the case $\eta_{\infty}>0$, the existence of a solution to \eqref{eq::liftweakdisc2}-\eqref{eq::variadisclift} follows by adapting the proof of Theorem \ref{existreg} with the pair $\textbf{V}_h^{div}\times V_h$, whereas conditional uniqueness is a consequence of Theorem \ref{unique} with $\sigma=\eta_\infty$ and $r=2$, adapted to the space $\textbf{V}_h^{div}\times V_h$. Notice that Theorem \ref{unique2} cannot be applied, since the solution $\mathbf{u}_h$ does not enjoy zero divergence and moreover we cannot ensure $\textbf{A}^{-1}\textbf{u}_h\in \textbf{V}_h$.

\subsection{A priori error analysis: main results}
Let us now state and prove the main results of this section, namely a priori error estimates for the discrete solution of  \eqref{eq::liftweakdisc2}-\eqref{eq::variadisclift}. First we state the most relevant result, which corresponds to the case when $\eta_\infty=0$. %For the ease of reading, let us recall that we set $s=2$ when $\eta_\infty\not=0$  and  $s=p$ when $\eta_\infty=0$, while  $s'$ is such that $1/s+1/s'=1$.
\begin{theorem} \label{theoesti:1}
Let $\eta_\infty=0$, $p\in (1, 2)$ for $d=2$ and  $p\in (3/2, 2)$ for $d=3$. Let $(\mathbf{u},\pi,\vartheta)$ be a solution of \eqref{eq::liftweak}-\eqref{eq::variastokesnllift}.
Let $(\mathbf{u}_h,\pi_h,\vartheta_h)$ be a solution of \eqref{eq::liftweakdisc2}-\eqref{eq::variadisclift} where we assume  $\mathbf{u}_h \in \textbf{W}^{1,(p-1)p_2}(\Omega)$, with $p_2=q/(p-1)$ with $q>p$ when $d=2$ and $p_2=6p/(5p-6)$ when $d=3$. In addition, we suppose that the following smallness condition on $\mathbf{u}_h$ holds
\begin{enumerate}
\item[(A1)] $ 
\kappa-2 \left[ 
\frac{\tilde{C}_2}{\kappa} \|g\| + \tilde{C}_3\left(1+C_f\right)  \| \theta\|_{H^{1/2}(\Gamma)}  \right]D_6 \lVert \varepsilon(\mathbf{u}_h) \rVert^{p-1}_{\textbf{L}^{(p-1)p_2}(\Omega)} >0,
$
\end{enumerate}
with $C_f={\tilde{C}_1}(1+\|\mathbf{f}\|_{\textbf{W}^{1,p'}(\Omega)})^{1/(p-1)}$, {$\tilde{C}_1>0$ depending on $\nu_1$, while $\tilde{C}_2,\tilde{C}_3, D_6>0$} depend on the data of the problem.
Then there exists $h_0>0$ such that, for any $h\leq h_0$,  the following inequalities hold  
\begin{eqnarray}
&&\lVert \nabla({\vartheta}-{\vartheta}_h) \rVert \lesssim \min_{(\mathbf{v}_h,q_h,s_h) \in \mathbf{V}_h\times Q_h \times V_h} \left(\lVert\nabla( \vartheta - s_h) \rVert + \lVert \varepsilon(\mathbf{u}-\mathbf{v}_h) \rVert^{p-1}_{\textbf{L}^p(\Omega)}\right. \nonumber\\
&& \left.  \hspace{6cm} + ~\lVert \pi -q_h \rVert_{L^{p'}(\Omega)}\right)\label{eq::testi:etazero}\\
&&\lVert \varepsilon(\mathbf{u}-\mathbf{u}_h) \rVert_{\textbf{L}^p(\Omega)} \lesssim  \min_{(\mathbf{v}_h,q_h,s_h) \in \mathbf{V}_h\times Q_h \times V_h} \left(\lVert \varepsilon(\mathbf{u}-\mathbf{v}_h) \rVert_{\textbf{L}^p(\Omega)}^{p-1} + \lVert \pi -q_h \rVert_{\textbf{L}^{p'}(\Omega)} \right.\nonumber\\
&&\left . \hspace{6cm} + ~\lVert \nabla(\vartheta - s_h) \rVert\right),  \label{eq::uesti:etazero}\\ 
&&\lVert \pi -\pi_h \rVert_{{L}^{p'}(\Omega)} \lesssim
 \min_{(\mathbf{v}_h,q_h,s_h) \in \mathbf{V}_h\times Q_h \times V_h} \left(
 \lVert \pi -q_h \rVert^{p-1}_{L^{p'}(\Omega)} + \lVert \varepsilon(\mathbf{u}-\mathbf{v}_h) \rVert^{(p-1)^2}_{\textbf{L}^p(\Omega)} \right. \nonumber\\
 && \left. \hspace{6cm}  +~ \lVert \nabla(\vartheta - \vartheta_h) \rVert^{p-1}\right), \label{eq::pesti:etazero} 
\end{eqnarray}
where $\textbf{u}$ is the solution to the continuous problem, and the hidden constants are independent of $h$.
\end{theorem}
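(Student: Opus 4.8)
The plan is to derive three coupled error equations by subtracting the discrete system \eqref{eq::liftweakdisc2}--\eqref{eq::variadisclift} from the continuous one \eqref{eq::liftweak}--\eqref{eq::variastokesnllift}, then to estimate the temperature and velocity errors first, to close the coupling by means of the smallness condition (A1), and to treat the pressure at the end. Throughout I write $\Theta=\vartheta+\Theta_0$, $\Theta_h=\vartheta_h+\Theta_0$, and I exploit that, since $\nabla\cdot\mathbf{u}=0$ for the continuous solution, $c(\mathbf{u},\cdot,\cdot)=c_h(\mathbf{u},\cdot,\cdot)$. For arbitrary $(\mathbf{v}_h,q_h,s_h)\in \mathbf{V}_{0,h}\times Q_h\times V_{0,h}$ I split $\vartheta-\vartheta_h=(\vartheta-s_h)+(s_h-\vartheta_h)$ and $\mathbf{u}-\mathbf{u}_h=(\mathbf{u}-\mathbf{v}_h)+(\mathbf{v}_h-\mathbf{u}_h)$, taking $\mathbf{v}_h$ in the discretely divergence-free space $\mathbf{V}_h^{div}$ so that the discrete pressure drops out when testing against $\mathbf{v}_h-\mathbf{u}_h$.

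For the temperature, subtracting the two heat equations and testing with $\xi_\vartheta:=s_h-\vartheta_h\in V_{0,h}$ gives, using the antisymmetry \eqref{eq:antisimmetry_of_c} to kill $c_h(\mathbf{u},\xi_\vartheta,\xi_\vartheta)$, the identity $\kappa\|\nabla\xi_\vartheta\|^2=-a_2(\vartheta-s_h,\xi_\vartheta)-c_h(\mathbf{u},\vartheta-s_h,\xi_\vartheta)+c_h(\mathbf{u}_h-\mathbf{u},\Theta_h,\xi_\vartheta)$. The first two terms are controlled by $\|\nabla(\vartheta-s_h)\|\,\|\nabla\xi_\vartheta\|$ through \eqref{c_h:cont} and the stability bound $\|\mathbf{u}\|_{\textbf{W}^{1,p}(\Omega)}\le C\|\mathbf{f}\|_{\textbf{L}^{p'}(\Omega)}$ from \eqref{stab:result_vel}; the last, genuinely coupling, term is bounded via \eqref{c_h:cont} by $\|\mathbf{u}-\mathbf{u}_h\|_{\textbf{W}^{1,p}(\Omega)}\,\|\nabla\Theta_h\|\,\|\nabla\xi_\vartheta\|$, where $\|\nabla\Theta_h\|$ is estimated a priori in terms of the data $\|g\|$ and $\|\theta\|_{H^{1/2}(\partial\Omega)}$ by testing \eqref{eq::variadisclift} with $\vartheta_h$. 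After a triangle inequality and Korn's inequality this produces $\|\nabla(\vartheta-\vartheta_h)\|\lesssim\|\nabla(\vartheta-s_h)\|+\|\varepsilon(\mathbf{u}-\mathbf{u}_h)\|_{\textbf{L}^p(\Omega)}$.

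For the velocity I test the momentum error equation with $\mathbf{e}_h:=\mathbf{u}_h-\mathbf{v}_h\in\mathbf{V}_h^{div}$ and insert $\pm a_1(\Theta_h,\mathbf{u};\mathbf{e}_h)$ to separate the velocity nonlinearity at frozen temperature $\Theta_h$ from the viscosity variation. The former is handled by the monotonicity bound of Lemma \ref{lemma:Carreau}, which yields coercivity only in the degenerate quasi-norm $\int[1+|\varepsilon(\mathbf{u})|+|\varepsilon(\mathbf{u}_h)|]^{p-2}|\varepsilon(\mathbf{u}-\mathbf{u}_h)|^2$; a Hölder interpolation then converts this into $\|\varepsilon(\mathbf{u}-\mathbf{u}_h)\|_{\textbf{L}^p(\Omega)}^{2}$ up to a factor bounded through \eqref{stab:result_vel}, and it is exactly this passage that generates the fractional exponents $p-1$ and $(p-1)^2$ in the statement. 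The viscosity-variation term is estimated with the Lipschitz continuity of $\nu$ and the Carreau bound $|\eta(|\varepsilon(\mathbf{u}_h)|^2)\varepsilon(\mathbf{u}_h)|\le C|\varepsilon(\mathbf{u}_h)|^{p-1}$, giving a contribution $\lesssim\|\nabla(\vartheta-\vartheta_h)\|\,\|\varepsilon(\mathbf{u}_h)\|^{p-1}_{\textbf{L}^{(p-1)p_2}(\Omega)}\,\|\varepsilon(\mathbf{e}_h)\|_{\textbf{L}^p(\Omega)}$, where the assumed higher integrability $\mathbf{u}_h\in\textbf{W}^{1,(p-1)p_2}(\Omega)$ is precisely what keeps this triple Hölder product finite. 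The pressure is removed by writing $b(\mathbf{e}_h,\pi)=b(\mathbf{e}_h,\pi-q_h)$ and bounding it by $\|\pi-q_h\|_{L^{p'}(\Omega)}\|\varepsilon(\mathbf{e}_h)\|_{\textbf{L}^p(\Omega)}$ via \eqref{eq::bcont}.

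The decisive step is to close the coupling: inserting the temperature bound into the velocity inequality and vice versa, the cross quantities $\|\varepsilon(\mathbf{u}-\mathbf{u}_h)\|$ and $\|\nabla(\vartheta-\vartheta_h)\|$ reappear on both sides multiplied by $\|\varepsilon(\mathbf{u}_h)\|^{p-1}_{\textbf{L}^{(p-1)p_2}(\Omega)}$, and condition (A1) guarantees that the resulting coefficient $\kappa-2[\ldots]\,\|\varepsilon(\mathbf{u}_h)\|^{p-1}_{\textbf{L}^{(p-1)p_2}(\Omega)}$ stays strictly positive, so that after Young's inequality these terms are absorbed, leaving \eqref{eq::testi:etazero} and \eqref{eq::uesti:etazero} in terms of interpolation errors only; the threshold $h_0$ enters to ensure that the discrete norms used in this absorption are controlled uniformly in $h$. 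Finally, \eqref{eq::pesti:etazero} follows from the discrete inf-sup condition \eqref{eq::bcompatidis}, $\beta\|\pi_h-q_h\|_{L^{p'}(\Omega)}\le\sup_{\mathbf{v}_h}b(\mathbf{v}_h,\pi_h-q_h)/\|\mathbf{v}_h\|_{\mathbf{V}}$, where the supremum is bounded by the momentum residual through the Carreau Hölder estimate on $a_1(\Theta,\mathbf{u};\cdot)-a_1(\Theta_h,\mathbf{u}_h;\cdot)$ together with the velocity and temperature bounds already obtained, followed by a triangle inequality with $\|\pi-q_h\|$. The main obstacle, and the source of all the unusual exponents, is the degenerate quasi-norm coercivity from Lemma \ref{lemma:Carreau}: every conversion between the natural energy quantity and the $\textbf{L}^p$ norms costs a Hölder interpolation, and one must ensure that the velocity--temperature coupling can still be severed by (A1) despite these nonlinear losses.
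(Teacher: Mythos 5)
Your proposal is correct and follows essentially the same route as the paper: quasi-optimality for the pressure via the discrete inf-sup condition, an energy estimate for the temperature error using the antisymmetrized trilinear form and the discrete stability bounds, a monotonicity/continuity (Céa-type) argument for the velocity based on Lemma \ref{lemma:Carreau} with the quasi-norm converted to $\textbf{L}^p$ by Hölder and the a priori bounds, and finally absorption of the velocity--temperature coupling through condition (A1). The only differences are organizational (the paper derives the pressure bound first and carries it through the velocity estimate, whereas you eliminate the discrete pressure by testing in the discretely divergence-free space and treat the pressure last), and in the velocity step the monotonicity pairing must be arranged on $\mathbf{u}_h-\mathbf{v}_h$ (inserting $\pm a_1(\cdot,\mathbf{v}_h;\mathbf{u}_h-\mathbf{v}_h)$ as the paper does) rather than directly on $\mathbf{u}-\mathbf{u}_h$, but this is routine bookkeeping that does not affect the argument.
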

\begin{oss}
Notice that in force of Theorems \ref{existregol} and \ref{existregol2} we have that any solution $\textbf{u}$ to the corresponding continuous problem is sufficiently regular to perform the error estimates, if we assume the additional hypotheses \eqref{extra} (for $d=3$) and \eqref{extraB1} (for $d=2$). %To be precise, Theorem \ref{existregol} concerns a slightly different law for $\tau$, i.e., \eqref{Carreau}, but we believe that the regularity results are similar if not the same. 
Therefore, we expect that also $\textbf{u}_h$, shares an analogous regularity (in case of a sufficiently regular domain $\Omega$), which is required in Theorem \ref{theoesti:1}.
\end{oss}
In conclusion, we state another result for the simpler case $\eta_\infty>0$.
\begin{theorem} \label{theoesti:2}
Let $\eta_\infty>0$, {$p\in (1, 2)$} and $(\mathbf{u},\pi,\vartheta)$ be a solution of \eqref{eq::liftweak}-\eqref{eq::variastokesnllift}.
Let $(\mathbf{u}_h,\pi_h,\vartheta_h)$ be a solution of \eqref{eq::liftweakdisc2}-\eqref{eq::variadisclift} where we assume  {$\mathbf{u}_h \in \textbf{W}^{1,p_2}(\Omega)$, with $p_2>2$ when $d=2$ and $p_2=3$} when $d=3$. In addition, we suppose that the following smallness condition on $\mathbf{u}_h$ holds 
\begin{enumerate}
\item[(B1)] $ 
\kappa-2 \left[ 
\frac{\tilde{C}_2}{\kappa} \|g\| + \tilde{C}_3\left(1+C_f\right)  \| \theta\|_{H^{1/2}(\Gamma)}  \right]D_6 \lVert \varepsilon(\mathbf{u}_h) \rVert_{\textbf{L}^{{p_2}}(\Omega)} >0,
$
\end{enumerate}
with $C_f={ \tilde{C}_1}/(\nu_1 \eta_{\infty})\|\mathbf{f}\|$, { here $\tilde{C}_1>0$ stands for the Korn constant,  while $\tilde{C}_2,\tilde{C}_3, D_6>0$} depend on the data of the problem.
Then there exists $h_0>0$ such that, for any $h\leq h_0$,  the following inequalities hold  
\begin{eqnarray}
&&\lVert \nabla({\vartheta}-{\vartheta}_h) \rVert \lesssim \min_{(\mathbf{v}_h,q_h,s_h) \in \mathbf{V}_h\times Q_h \times V_h} \left(\lVert\nabla( \vartheta - s_h) \rVert + \lVert \varepsilon(\mathbf{u}-\mathbf{v}_h) \rVert\right. \nonumber\\
&& \left.  \hspace{6cm} + ~\lVert \pi -q_h \rVert\right)\label{eq::testi:etazero1}\\
&&\lVert \varepsilon(\mathbf{u}-\mathbf{u}_h) \rVert\lesssim  \min_{(\mathbf{v}_h,q_h,s_h) \in \mathbf{V}_h\times Q_h \times V_h} \left(\lVert \varepsilon(\mathbf{u}-\mathbf{v}_h) \rVert + \lVert \pi -q_h \rVert \right.\nonumber\\
&&\left . \hspace{6cm} + ~\lVert \nabla(\vartheta - s_h) \rVert\right),  \label{eq::uesti:etazero1}\\ 
&&\lVert \pi -\pi_h \rVert \lesssim
 \min_{(\mathbf{v}_h,q_h,s_h) \in \mathbf{V}_h\times Q_h \times V_h} \left(
 \lVert \pi -q_h \rVert + \lVert \varepsilon(\mathbf{u}-\mathbf{v}_h) \rVert \right. \nonumber\\
 && \left. \hspace{6cm}  +~ \lVert \nabla(\vartheta - \vartheta_h) \rVert\right), \label{eq::pesti:etazero1} 
\end{eqnarray}
where $\textbf{u}$ is the solution to the continuous problem, and the hidden constants are independent of $h$.
\end{theorem}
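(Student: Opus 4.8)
The plan is to follow the same scheme as for Theorem \ref{theoesti:1}, but to exploit the crucial simplification afforded by $\eta_\infty>0$: in this regime the second part of Lemma \ref{lemma:Carreau} yields the \emph{uniform} coercivity estimate $\sum_{i,j}(\eta(|\mathbf K|^2)K_{ij}-\eta(|\mathbf L|^2)L_{ij})(K_{ij}-L_{ij})\ge \eta_\infty|\mathbf K-\mathbf L|^2$ (discarding the nonnegative degenerate term), while the first part provides the global Lipschitz bound with constant $C_1$. Hence $\mathbf K\mapsto\eta(|\mathbf K|^2)\mathbf K$ is strongly monotone and Lipschitz in the $\mathbf L^2$ metric, so $a_1$ behaves like a temperature-weighted, strongly coercive and globally Lipschitz form on $\mathbf W^{1,2}$. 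This is precisely what lets every error term enter to the first power, in contrast with the degenerate $\eta_\infty=0$ case of Theorem \ref{theoesti:1}. I would begin by writing the error equations, subtracting \eqref{eq::liftweakdisc2}-\eqref{eq::variadisclift} from \eqref{eq::liftweak}-\eqref{eq::variastokesnllift} tested on the discrete spaces, noting that $\Theta_0\in V_h$ makes the lift exact and that, since the exact velocity is pointwise divergence free, $c(\mathbf u,\cdot,\cdot)=c_h(\mathbf u,\cdot,\cdot)$, so replacing $c$ by the antisymmetrised form \eqref{eq::antisim} introduces no inconsistency on the exact solution.

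For the temperature, I would decompose $\vartheta-\vartheta_h=(\vartheta-s_h)-(\vartheta_h-s_h)$ for arbitrary $s_h\in V_{0,h}$, set $e_h:=\vartheta_h-s_h$, and test the thermal error equation with $\varrho_h=e_h$. The coercivity $a_2(e_h,e_h)=\kappa\|\nabla e_h\|^2$ together with the antisymmetry $c_h(\mathbf u_h,e_h,e_h)=0$ (cf. \eqref{eq:antisimmetry_of_c}) isolates the coercive term. The convective mismatch $c_h(\mathbf u,\vartheta,e_h)-c_h(\mathbf u_h,\vartheta_h,e_h)$ splits, after adding and subtracting, into one contribution carrying the velocity error $\mathbf u-\mathbf u_h$ and one carrying the temperature error; using the continuity \eqref{c_h:cont}, the a priori control of $\|\varepsilon(\mathbf u_h)\|$ obtained by testing the discrete momentum equation with $\mathbf u_h$ and using $\eta_\infty>0$ (which yields coercivity $\ge 2\nu_1\eta_\infty\|\varepsilon(\mathbf u_h)\|^2$ and is the origin of $C_f$ in (B1)), and finally the smallness condition (B1), the temperature-error contribution is absorbed into $\kappa\|\nabla e_h\|^2$. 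This gives $\|\nabla(\vartheta-\vartheta_h)\|$ controlled by the best approximation of $\vartheta$ and by $\|\varepsilon(\mathbf u-\mathbf u_h)\|$, i.e. \eqref{eq::testi:etazero1}.

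For the velocity and pressure, I would test the momentum error equation with a discretely divergence-free $\mathbf v_h\in\mathbf V_h^{div}$, so that the $b(\cdot,\pi_h)$ terms drop, use the uniform monotonicity to bound $\eta_\infty\|\varepsilon(\mathbf u-\mathbf u_h)\|^2$ from below, and the Lipschitz bound for the velocity dependence of $a_1$. The dependence on temperature enters through $\nu(\vartheta+\Theta_0)-\nu(\vartheta_h+\Theta_0)$, which I would estimate by $\|\nu'\|_{L^\infty(\mathbb R)}\|\vartheta-\vartheta_h\|_{L^6(\Omega)}$ times $\|\eta(|\varepsilon(\mathbf u)|^2)\varepsilon(\mathbf u)\|$, the latter bounded in terms of the data via (H3) and the stability estimate \eqref{stab:result_vel}; this produces a term $\lesssim\|\nabla(\vartheta-\vartheta_h)\|$. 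The outcome is \eqref{eq::uesti:etazero1}, with $\|\varepsilon(\mathbf u-\mathbf u_h)\|$ bounded by the best approximations of $\mathbf u$ and $\pi$ plus the temperature error. The pressure estimate \eqref{eq::pesti:etazero1} then follows from the discrete inf-sup condition \eqref{eq::bcompatidis}: recovering $b(\mathbf v_h,\pi_h-q_h)$ from the momentum error equation and bounding the right-hand side by $\|\varepsilon(\mathbf u-\mathbf u_h)\|$, the viscosity/temperature mismatch, and $\|\pi-q_h\|_{Q_*}$.

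The main obstacle, and the step requiring genuine care, is closing the coupled pair of inequalities: the velocity error is controlled by the temperature error (through the variable viscosity), while the temperature error is controlled by the velocity error (through convection). I would insert the temperature bound into the velocity bound and vice versa and then absorb the feedback terms, which is possible precisely because (B1) makes the convective coupling strictly subordinate to the thermal coercivity $\kappa$, while the viscosity coupling is rendered absorbable by choosing $h\le h_0$ small enough — via the approximability properties (P1)-(P3) — so that the best-approximation quantities multiplying the cross terms are small, uniformly in $h$. A subsidiary technical nuisance throughout is that $\mathbf u_h$ is only \emph{discretely} divergence free, so the antisymmetrised form $c_h$ and the inf-sup-based pressure recovery must be used consistently, rather than the pointwise identity $c(\mathbf u,\vartheta,\vartheta)=0$ available only for the exact solution.
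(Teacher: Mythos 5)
Your proposal is correct and follows essentially the same route as the paper's proof: inf-sup recovery of the pressure error, testing the thermal error equation with $\vartheta_h-s_h$ and using antisymmetry of $c_h$, strong monotonicity and Lipschitz continuity of $a_1$ (available since $\eta_\infty>0$, so $s=2$ and all errors enter linearly), and closure of the velocity--temperature feedback loop via the smallness condition (B1). The only cosmetic differences are the order in which the three estimates are derived and your use of discretely divergence-free test functions in the momentum error equation, where the paper instead keeps general $\mathbf{v}_h\in\mathbf{V}_{0,h}$ and reabsorbs the pressure terms through its previously derived pressure bound.
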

\begin{oss}[{ Orders of convergence}]\label{oss:orders}
{ Assuming the validity of suitable interpolation error estimates holding for the approximation spaces $\mathbf{V}_{h} \times Q_h\times V_{h}$, from Theorems \ref{theoesti:1} and \ref{theoesti:2} it is possible to deduce precise orders of convergence of the discrete solutions towards the continuous ones. For instance, we consider 
the finite element spaces ($r\geq1$)
\begin{eqnarray}
    \mathbf{V}_h &=& \{\mathbf{v}_h \in \textbf{C}(\overline{\Omega}) | \forall K \in \mathcal{T}_h, \ \mathbf{v}_h|_{K} \in \mathbb{P}_{r+1}(K)^2\},\nonumber\\
    {Q}_h &=& \{{q}_h \in C(\overline{\Omega}) | \forall K \in \mathcal{T}_h, \ {q}_h|_{K} \in \mathbb{P}_{r}(K)\},\nonumber\\
    {V}_h &=& \{\varrho_h \in C(\overline{\Omega}) | \forall K \in \mathcal{T}_h, \ \varrho_h|_{K} \in \mathbb{P}_{r+1}(K)\}.\nonumber\end{eqnarray}
   In the case $\eta_\infty=0$ (which is the most relevant one), standard interpolation error estimates (see, e.g., Theorem 4.4.4. in Ref. \refcite{BrennerScott}) together with the regularity assumptions $(\mathbf{u}, \pi, \vartheta)\in \mathbf{W}^{r+2,p}(\Omega) \times {W}^{r+1,p'}(\Omega)\times \in H^{r+2}(\Omega)$
yields

\begin{eqnarray}
&&\lVert \nabla({\vartheta}-{\vartheta}_h) \rVert \lesssim h^{r+1}+h^{(r+1)(p-1)}+h^{r+1}=\mathcal{O}(h^{(r+1)(p-1)}),\nonumber\\
&&\lVert \varepsilon(\mathbf{u}-\mathbf{u}_h) \rVert_{L^p(\Omega)}\lesssim  h^{(r+1)(p-1)}+ h^{r+1}+h^{r+1}=\mathcal{O}(h^{(r+1)(p-1)}),\nonumber\\
&&\lVert \pi -\pi_h \rVert_{L^{p'}(\Omega)} \lesssim h^{(r+1)(p-1)}
+ h^{(r+1)(p-1)^2} + h^{(r+1)(p-1)}=\mathcal{O}(h^{(r+1)(p-1)^2})
.\nonumber
\end{eqnarray}
In the case $\eta_\infty>0$, under the regularity assumptions $$(\mathbf{u}, \pi, \vartheta)\in \mathbf{W}^{r+2,2}(\Omega) \times {W}^{r+1,2}(\Omega)\times \in H^{r+2}(\Omega),$$
we get

$$\lVert \nabla({\vartheta}-{\vartheta}_h) \rVert =\mathcal{O}(h^{r+1}), \quad\lVert \varepsilon(\mathbf{u}-\mathbf{u}_h) \rVert=\mathcal{O}(h^{r+1}),\quad \lVert \pi -\pi_h \rVert =\mathcal{O}(h^{r+1}).
$$

}Notice that in the case $\eta_\infty>0$ we get optimal approximation results, whereas when $\eta_\infty=0$ we deduce error estimates that are coherent with the ones obtained, though in the simpler context of isothermal non-Newtonian fluids, in Ref. \refcite{Botti_et_al:2021}. For further comments on these aspects we refer to the numerical results reported in Section \ref{S:numres}.
\end{oss}
\subsection{Proof of Theorems \ref{theoesti:1} and \ref{theoesti:2}}
For the sake of brevity, we consider both the cases $\eta_\infty>0$ and $\eta_\infty=0$. In particular, as before, we set $s=p$ if $\eta_\infty=0$ and $s=2$ if $\eta_\infty>0$. We preliminarily collect some instrumental results that will be employed during the proof. Let us first recall that in view of Assumption 
$\mathbf{(H1)}$ we have 
that $\nu(\Theta)$ is a bounded continuous function defined on $(0,+\infty)$ satisfying the following properties:
\begin{eqnarray}
 &&\nu \in C^1(\mathbb{R}),\\
&& 0<\nu_1\leq\nu(\xi)\leq \nu_2 \quad \text{ for } \xi \in \mathbb{R}^+,\label{eq::etadef:etazero}\\
  &&  |\nu'(\xi)| \leq \nu_3 \quad \text{ for } \xi \in \mathbb{R}^+.\label{eq::etadef2:etazero}
\end{eqnarray}
Moreover, let us  observe that, exploiting Lemma \ref{lemma:Carreau} combined with
 the definition of $a_1(\cdot,\cdot;\cdot)$,  \eqref{eq::etadef:etazero} and  the generalized (also to negative exponents) 
 H\"older's inequality,  we can prove the following inequalities holding $\forall \mathbf{u},\mathbf{v}, \mathbf{w} \in \mathbf{V}_0$:\\
 
\noindent{\bf Case $\mathbf{\eta_\infty>0}$} 
\begin{eqnarray}
&& \vert a_1(\vartheta, \mathbf{u};\mathbf{w})
  -  a_1(\vartheta, \mathbf{v};\mathbf{w})\vert \leq C_1 \nu_2 \lVert \varepsilon(\mathbf{u}-\mathbf{v})\rVert\lVert \varepsilon(\mathbf{w})\rVert, \label{eq::a1prop}	\\ 
 && a_1(\vartheta, \mathbf{u};\mathbf{u}-\mathbf{v})
  -  a_1(\vartheta, \mathbf{v};\mathbf{u}-\mathbf{v}) \geq  \nu_1 \eta_\infty 
  \lVert \varepsilon(\mathbf{u}-  \mathbf{v})\rVert^2. \label{eq::a1prop-bis}
  \end{eqnarray}
  
\noindent{\bf Case $\mathbf{\eta_\infty=0}$}
\begin{eqnarray}
&& \vert a_1(\vartheta, \mathbf{u};\mathbf{w})
  -  a_1(\vartheta, \mathbf{v};\mathbf{w})\vert \leq C_3 \nu_2  \lVert \varepsilon(\mathbf{u-v})\rVert^{p-1}_{\textbf{L}^p} \lVert \varepsilon(\mathbf{w})\rVert_{\textbf{L}^p}, \label{eq::a1prop:etazero}	\\ 
 && a_1(\vartheta, \mathbf{u};\mathbf{u}-\mathbf{v})
  -  a_1(\vartheta, \mathbf{v};\mathbf{u}-\mathbf{v})   \nonumber \\
  && \geq C_2\nu_1 
 \left(\int_{\Omega} 1 + \vert \varepsilon (\mathbf{u})\vert^p+ \vert \varepsilon (\mathbf{v})\vert^p \right)^{\frac{p-2}{p}} \|\varepsilon(\mathbf{u}-\mathbf{v})\|^2_{\textbf{L}^p}.
 \label{eq::a1prop-bis:etazero}
 \end{eqnarray}

Using the definition of  $a_2(\cdot,\cdot)$, it is trivial to see that the following inequalities hold
 \begin{eqnarray}
  &&a_2(\vartheta,\varrho) \leq \kappa  \lVert \nabla \vartheta \rVert \lVert \nabla \varrho \rVert, \quad a_2(\vartheta,\vartheta) \geq  \kappa \lVert \nabla \vartheta \rVert^2~~~~\forall \vartheta, \varrho \in V_0.\label{eq::a2prop}
\end{eqnarray}

Finally, we collect some stability estimates. 
We first observe that the following stability estimate for the discrete velocity holds
\begin{equation}\label{stab:result_vel2}
\|\nabla \mathbf{u}_h\|_{\textbf{L}^s(\Omega)} \leq C_f
\end{equation}
where $C_f={\tilde{C}_1}(1+\|\mathbf{f}\|_{\textbf{W}^{1,p'}(\Omega)})^{1/(p-1)}$ when $\eta_\infty=0$ (${\tilde{C}_1}$ depending on $\nu_1$) and $C_f={\tilde{C}_1}/(\nu_1 \eta_{\infty})\|\mathbf{f}\|$ when $\eta_\infty>0$. Indeed, in the case $\eta_\infty>0$, testing \eqref{eq::liftweakdisc2}-\eqref{eq::middlelift2}
 with $\mathbf{v}_h=\mathbf{u}_h$ and $q_h=\pi_h$ and employing the monotonicity  of $a_1(\cdot,\cdot;\cdot)$  together with the properties of $\nu$  combined with the Korn inequality, we  obtain \eqref{stab:result_vel2} where { $\tilde{C}_1$} is the Korn constant. In the case $\eta_\infty=0$, it is sufficient to proceed as in the proof of Theorem \ref{THM1} (cf. \eqref{Lp}) to get the desired stability estimate. 

Testing \eqref{eq::variadisclift}
with $\varrho_h=\theta_h$ and employing the coercivity and continuity properties of $a_2(\cdot,\cdot)$,  the Poincar\' e inequality, the continuity and antisimmetry of $c_h(\cdot,\cdot,\cdot)$, the above stability result of the discrete velocity  and  the stability of the Dirichlet lifting $\Theta_0$, we get the following stability estimate for the discrete temperature
 \begin{eqnarray}\label{stab:result_temp}
 \rVert  \nabla \vartheta_h \rVert \leq 
 \frac{1}{\kappa}[\tilde{C}_2 \|g\| + \kappa \tilde{C}_3 
 \| \theta\|_{H^{1/2}(\Gamma)} + \tilde{C}_3 C_f\| \theta\|_{H^{1/2}(\Gamma)} ].
 \end{eqnarray}
We remark that an analogous stability estimate holds for the continuous temperature as well.

We are now ready to prove \eqref{eq::testi:etazero}-\eqref{eq::pesti:etazero}.  Let us consider \eqref{eq::liftweak} which, together with \eqref{eq::liftweakdisc2},  yields 
\begin{equation*}
\begin{aligned}
    b(\mathbf{v}_h,\pi-\pi_h) &= a_1(\vartheta_h + \Theta_0,\mathbf{u}_h;\mathbf{v}_h) - a_1(\vartheta + \Theta_0,\mathbf{u};\mathbf{v}_h) \\&= a_1(\vartheta_h + \Theta_0,\mathbf{u}_h;\mathbf{v}_h) -a_1(\vartheta + \Theta_0,\mathbf{u}_h;\mathbf{v}_h)\\&
    \quad+ a_1(\vartheta_h + \Theta_0,\mathbf{u}_h;\mathbf{v}_h)-a_1(\vartheta_h + \Theta_0,\mathbf{u};\mathbf{v}_h)
    \end{aligned}
\end{equation*}
$\forall \mathbf{v}_h \in \mathbf{V}_{h,0}$. By linearity we have
\begin{equation}
\begin{aligned}
   b(\mathbf{v}_h,q_h-\pi_h) &= a_1(\vartheta_h + \Theta_0,\mathbf{u}_h;\mathbf{v}_h) -a_1(\vartheta + \Theta_0,\mathbf{u}_h;\mathbf{v}_h)\\&
    \quad+ a_1(\vartheta_h + \Theta_0,\mathbf{u}_h;\mathbf{v}_h)-a_1(\vartheta_h + \Theta_0,\mathbf{u};\mathbf{v}_h)\\
   &\quad +  b(\mathbf{v}_h,q_h-\pi)
   \end{aligned}
   \label{eq::partialpres}
\end{equation}
$\forall q_h \in Q_h$. Now, considering the compatibility condition \eqref{eq::bcompatidis} together with \eqref{eq::partialpres}, we obtain
\begin{align}
    &\beta \lVert \pi_h - q_h \rVert_{L^{s'}(\Omega)} \leq \sup_{\mathbf{v}_h \in \mathbf{V}_{0,h}} \dfrac{ b(\mathbf{v}_h,q_h-\pi_h)}{\|{v}_h\|_{\mathbf{V}}} \nonumber\\
    &=
    \sup_{\mathbf{v}_h \in \mathbf{V}_{0,h}}\left[\dfrac{(a_1(\vartheta_h + \Theta_0,\mathbf{u}_h;\mathbf{v}_h)-a_1(\vartheta + \Theta_0,\mathbf{u}_h;\mathbf{v}_h)}{\|{v}_h\|_{\mathbf{V}}}\right.\nonumber\\
&\left.+~ \dfrac{a_1(\vartheta_h + \Theta_0,\mathbf{u}_h,\mathbf{v}_h)-a_1(\vartheta_h + \Theta_0,\mathbf{u};\mathbf{v}_h)
   +  b(\mathbf{v}_h,q_h-\pi)}{\|{v}_h\|_{\mathbf{V}}}\right] \nonumber\\
   &\leq  C_s \nu_3\lVert\nabla(\vartheta -\vartheta_h)\rVert\lVert\varepsilon(\mathbf{u}_h)\rVert^{s-1}_{\textbf{L}^{(s-1)p_2}(\Omega)} \\&+ C_s \nu_2  \lVert \varepsilon(\mathbf{u}-\mathbf{u}_h)\rVert^{s-1}_{\textbf{L}^s(\Omega)}  + M \lVert \pi -q_h \rVert_{L^{s'}(\Omega)},\nonumber
 \end{align}
where $C_s=C_1$ for $s=2$, $C_s=C_3$ for $s\not=2$ (cf. \eqref{eq::a1prop} and \eqref{eq::a1prop:etazero}). Moreover, $p_2$ is the one defined in the assumptions of Theorems \ref{theoesti:1} and \ref{theoesti:2}. We note that in the last step  we employed the continuity property \eqref{eq::a1prop} (or \eqref{eq::a1prop:etazero}) for the second term, and the continuity of $b(\cdot,\cdot)$ for the last term. For the first term, we employed the properties of $\nu$ together with Lemma \ref{lemma:Carreau} and the generalized H\"older's inequality to get
\begin{eqnarray}
&&a_1(\vartheta_h + \Theta_0,\mathbf{u}_h;\mathbf{v}_h)-a_1(\vartheta + \Theta_0,\mathbf{u}_h;\mathbf{v}_h) 	\nonumber\\
&&\leq C_s\nu_3 \|\theta-\theta_h\|_{L^\beta(\Omega)} 
\left(\int_\Omega \vert \varepsilon(\mathbf{u}_h)\vert^{(s-1)p_2}\right)^{1/p_2}\|\varepsilon(\mathbf{v}_h)\|_{\textbf{L}^s(\Omega)}\nonumber\\
&&\leq C_s\nu_3 \|\theta-\theta_h\|_{L^\beta(\Omega)} 
\|\varepsilon(\mathbf{u}_h)\|^{s-1}_{\textbf{L}^{(s-1)p_2}(\Omega)}
\|\varepsilon(\mathbf{v}_h)\|_{\textbf{L}^s(\Omega)}\label{aux:estimate:a}
\end{eqnarray}
where $1/\beta+1/p_2+1/s=1$ and $1/\beta=1-1/p_2-1/s$, $p_2=q/(s-1)$ with $q>s$ when $d=2$ and $\beta=6$, $p_2=6s/(5s-6)$ when $d=3$. 

Thus, using the triangle inequality we obtain 
\begin{eqnarray}
    \lVert \pi-\pi_h \rVert_{L^{s'}(\Omega)} &\leq& \lVert \pi-q_h \rVert_{L^{s'}(\Omega)}  + \lVert \pi_h - q_h \rVert_{L^{s'}(\Omega)} \nonumber \\
    &\leq& (1+\frac{M}{\beta}) \lVert \pi-q_h \rVert_{L^{s'}(\Omega)}  + 
    \frac{C_s \nu_2}{\beta}    \lVert \varepsilon(\mathbf{u}-\mathbf{u}_h)\rVert^{s-1}_{\textbf{L}^s(\Omega)} \nonumber\\
    && + \frac{C_s \nu_3}{\beta} \lVert\nabla(\vartheta -\vartheta_h)\rVert \|\varepsilon(\mathbf{u}_h)\|^{s-1}_{\textbf{L}^{(s-1)p_2}(\Omega)}. 
    \label{eq::pressureesti:etazero}
\end{eqnarray}
Next, we estimate the error on the temperature.
Taking the difference between  \eqref{eq::variadisclift} and \eqref{eq::variastokesnllift} and choosing $\varrho = \varrho_h$ yield
\begin{equation*}
    a_2(\vartheta_h-\vartheta,\varrho_h) + c_h(\mathbf{u}_h,\vartheta_h,\varrho_h) - c_h(\mathbf{u},\vartheta,\varrho_h) = c_h(\mathbf{u}-\mathbf{u}_h,\Theta_0,\varrho_h). 
\end{equation*}
Adding and subtracting the two terms $a_2(s_h,\varrho_h)$ and $c_h(\mathbf{u}_h,s_h,\varrho_h)$ we have, for all $s_h \in V_{0,h}$ and for all $\mathbf{v}_h \in \mathbf{V}_h$
\begin{eqnarray}\label{aux:1}
    &&a_2(\vartheta_h - s_h,\varrho_h) + c_h(\mathbf{u}_h,\vartheta_h-s_h,\varrho_h) =  a_2(\vartheta - s_h,\varrho_h) + c_h(\mathbf{u}_h,\vartheta-s_h,\varrho_h) \nonumber\\
    && \qquad + ~c_h(\mathbf{u}-\mathbf{v}_h,\vartheta,\varrho_h) + c_h(\mathbf{v}_h-\mathbf{u}_h,\vartheta,\varrho_h)\nonumber \\
    && \qquad+~ c_h(\mathbf{u}-\mathbf{v}_h,\Theta_0,\varrho_h) + c_h(\mathbf{v}_h-\mathbf{u}_h,\Theta_0,\varrho_h).
\end{eqnarray}
Next, taking $\varrho_h = \vartheta_h-s_h$ in \eqref{aux:1}, noting $c_h(\mathbf{u}_h,\vartheta_h-s_h,\vartheta_h-s_h)=0$, using  the coercivity and continuity properties of $a_2(\cdot,\cdot)$, the continuity property \eqref{c_h:cont}  of $c_h(\cdot,\cdot,\cdot)$, the stability estimates for $\textbf{u}_h$, $\vartheta$ and $\Theta_0$ we have
\begin{eqnarray}\label{eq::tempestif:etazero}
\lVert \nabla(\vartheta_h -s_h)\rVert 
        &\leq& \Lambda_1
  \lVert \nabla(\vartheta -s_h) \rVert  \nonumber\\
&&+      \Lambda_2   
        \left(
        \lVert \varepsilon(\mathbf{u}-\mathbf{v}_h)\rVert_{\textbf{L}^s(\Omega)} 
        +
        \lVert \varepsilon(\mathbf{v}_h-\mathbf{u}_h)\rVert_{\textbf{L}^s(\Omega)} \right) 
\end{eqnarray}
where 
\begin{eqnarray}
&&\Lambda_1= 1+ C_f,\nonumber\\
&&\Lambda_2=  \frac{2}{\kappa}\left(
        \frac{\tilde{C}_2}{\kappa} \|g\| + \left(\tilde{C}_3 
 + \tilde{C}_3 C_f \right)  \| \theta\|_{H^{1/2}(\Gamma)}
        \right).
\end{eqnarray}
{Notice that in the case $s=2$, i.e., $\eta_\infty>0$, \eqref{eq::tempestif:etazero} holds for any $p\in(1,2)$, by \eqref{c_h:cont}, being $\textbf{W}^{1,p}(\Omega)\hookrightarrow \textbf{H}^1(\Omega)$ for any $p\in(1,2]$, whereas in the case $s=p$, so that $\eta_\infty=0$, we are forced to keep $p\in(3/2,2)$, again due to \eqref{c_h:cont}.}
To take advantage of the above inequality, we now estimate $\lVert \varepsilon(\mathbf{u}_h-\mathbf{v}_h)\rVert_{\textbf{L}^s(\Omega)}$.

Consider the momentum equation and take the difference between \eqref{eq::liftweak} and \eqref{eq::liftweakdisc2}. Choosing $\mathbf{u}_h-\mathbf{v}_h$ as test function we obtain
\begin{equation}
    a_1(\vartheta + \Theta_0,\mathbf{u};\mathbf{u}_h-\mathbf{v}_h)-a_1(\vartheta_h + \Theta_0,\mathbf{u}_h;\mathbf{u}_h-\mathbf{v}_h) + b(\mathbf{u}_h-\mathbf{v}_h,\pi-\pi_h) = 0.\label{eq::a2diff}
\end{equation}
Adding and subtracting $a_1(\vartheta + \Theta_0,\mathbf{u}_h;\mathbf{u}_h-\mathbf{v}_h)$ we get
\begin{equation}
\begin{aligned}
&a_1(\vartheta + \Theta_0,\mathbf{u};\mathbf{u}_h-\mathbf{v}_h)
-a_1(\vartheta + \Theta_0,\mathbf{u}_h;\mathbf{u}_h-\mathbf{v}_h)
\\
&
+a_1(\vartheta + \Theta_0,\mathbf{u}_h;\mathbf{u}_h-\mathbf{v}_h)-a_1(\vartheta_h + \Theta_0,\mathbf{u}_h;\mathbf{u}_h-\mathbf{v}_h) \\
&+ b(\mathbf{u}_h-\mathbf{v}_h,\pi-\pi_h) = 0.
\end{aligned}
\end{equation}
Now, recalling 
\begin{equation*}
\begin{aligned}
    b(\mathbf{u}_h-\mathbf{v}_h,\pi-\pi_h) =& \ b(\mathbf{u}_h-\mathbf{u},\pi-q_h)+b(\mathbf{u}_h-\mathbf{u},q_h-\pi_h)\\+&b(\mathbf{u}-\mathbf{v}_h,\pi-\pi_h) = b(\mathbf{u}_h-\mathbf{u},\pi-q_h) + b(\mathbf{u}-\mathbf{v}_h,\pi-\pi_h),
    \end{aligned}
\end{equation*}
adding and subtracting  
$a_1(\vartheta + \Theta_0,\mathbf{v}_h;\mathbf{u}_h-\mathbf{v}_h)$ we get
\begin{equation}
\begin{aligned}
    &a_1(\vartheta + \Theta_0,\mathbf{u}_h;\mathbf{u}_h-\mathbf{v}_h) 
-a_1(\vartheta + \Theta_0,\mathbf{v}_h;\mathbf{u}_h-\mathbf{v}_h)    
    = \\ &~a_1(\vartheta + \Theta_0,\mathbf{u};\mathbf{u}_h-\mathbf{v}_h) 
    -a_1(\vartheta + \Theta_0,\mathbf{v}_h;\mathbf{u}_h-\mathbf{v}_h) 
     \\
    &
    +a_1(\vartheta + \Theta_0,\mathbf{u}_h;\mathbf{u}_h-\mathbf{v}_h)-a_1(\vartheta_h + \Theta_0,\mathbf{u}_h;\mathbf{u}_h-\mathbf{v}_h)\\
    &+b(\mathbf{u}_h-\mathbf{u},\pi-q_h) + b(\mathbf{u}-\mathbf{v}_h,\pi-\pi_h).
    \end{aligned}
    \label{eq::eqerrorvel}
\end{equation}

From \eqref{eq::eqerrorvel} we employ, depending on the value of $\eta_\infty$, \eqref{eq::a1prop}-\eqref{eq::a1prop-bis}
(or \eqref{eq::a1prop:etazero}-\eqref{eq::a1prop-bis:etazero}),  combined  with \eqref{stab:result_vel}, \eqref{aux:estimate:a} and \eqref{eq::bcont}. This yields
\begin{equation}
    \begin{aligned}
    & C\nu_1 \lVert \varepsilon(\mathbf{u}_h -\mathbf{v}_h) \rVert_{\textbf{L}^s(\Omega)}^2\\
    & \leq C_s\nu_2\lVert \varepsilon(\mathbf{u}-\mathbf{v}_h) \rVert^{s-1}_{\textbf{L}^s(\Omega)} \lVert \varepsilon(\mathbf{u}_h-\mathbf{v}_h) \rVert_{\textbf{L}^s(\Omega)} \\&+ C_s \nu_3 \lVert \nabla(\vartheta-\vartheta_h)\rVert \|\varepsilon(\mathbf{u}_h)\|^{s-1}_{\textbf{L}^{(s-1)p_2}(\Omega)}
    \lVert \varepsilon(\mathbf{u}_h-\mathbf{v}_h) \rVert_{\textbf{L}^s(\Omega)} \\
    &+M\lVert \varepsilon(\mathbf{u}_h-\mathbf{u})\rVert_{\textbf{L}^s(\Omega)} \lVert \pi-q_h\rVert_{L^{s'}(\Omega)}+
    M\lVert \varepsilon(\mathbf{u}-\mathbf{v}_h)\rVert_{\textbf{L}^s(\Omega)} \lVert \pi-\pi_h\rVert_{L^{s'}(\Omega)}
    \end{aligned}
\end{equation}
where $C=\eta_\infty$ for $\eta_\infty>0$ and $C=
C_2 \left(1+ \| \mathbf{u}_h\|^p_{1,p}+\|  \mathbf{v}_h\|^p_{1,p}\right)^{(p-2)/p}$  for $\eta_\infty=0$.
Using \eqref{eq::pressureesti:etazero} we have
\begin{align}
 C\nu_1 \lVert \varepsilon(\mathbf{u}_h -\mathbf{v}_h) \rVert_{\textbf{L}^s(\Omega)}^2
& \leq  C_s\nu_2\lVert \varepsilon(\mathbf{u}-\mathbf{v}_h) \rVert^{s-1}_{\textbf{L}^s(\Omega)} \lVert \varepsilon(\mathbf{u}_h-\mathbf{v}_h) \rVert_{\textbf{L}^s(\Omega)}\nonumber\\
 &+C_s \nu_3 \lVert \nabla(\vartheta-\vartheta_h)\rVert \|\varepsilon(\mathbf{u}_h)\|^{s-1}_{\textbf{L}^{(s-1)p_2}(\Omega)}
    \lVert \varepsilon(\mathbf{u}_h-\mathbf{v}_h) \rVert_{\textbf{L}^s(\Omega)} \nonumber\\
    &  +M\lVert \varepsilon(\mathbf{u}_h-\mathbf{u})\rVert_{\textbf{L}^s(\Omega)} \lVert \pi-q_h\rVert_{{L}^{s'}(\Omega)} \nonumber\\
    &+M(1+\frac{M}{\beta}) \lVert \pi-q_h \rVert_{L^{s'}(\Omega)} \lVert \varepsilon(\mathbf{u}-\mathbf{v}_h)\rVert_{\textbf{L}^s(\Omega)} 
   \nonumber \\&
    +
   M\frac{C_3 \nu_2}{\beta} \lVert \varepsilon(\mathbf{u} - \mathbf{u}_h) \rVert^{s-1}_{\textbf{L}^s(\Omega)} 
   \lVert \varepsilon( \mathbf{u}-\mathbf{v}_h)\rVert_{\textbf{L}^s(\Omega)}
   \nonumber\\&
   +
   {M\frac{C_s \nu_3}{\beta} \lVert \nabla(\vartheta-\vartheta_h)\rVert \|\varepsilon(\mathbf{u}_h)\|^{s-1}_{\textbf{L}^{(s-1)p_2}(\Omega)}
    \lVert \varepsilon(\mathbf{u}_h-\mathbf{v}_h) \rVert_{\textbf{L}^s(\Omega)}}.   \nonumber
\end{align}
Then, applying the triangle inequality together with the generalized Young's  inequality  (with exponents $2/(s-1)$ and $(3-s)/2$) we have, with suitable positive constants $D_1,D_2,D_3,D_4,D_5, D_6$ independent of $h$ and possibly dependent on problem data, the following inequalities {(after taking the square root of each side)}
\begin{eqnarray}
    \lVert \varepsilon(\mathbf{u}_h -\mathbf{v}_h) \rVert_{\textbf{L}^s(\Omega)}
    &\leq& D_1 \lVert \varepsilon(\mathbf{u}-\mathbf{v}_h) \rVert_{\textbf{L}^s(\Omega)}^{s-1} + D_2 \lVert \varepsilon(\mathbf{u}-\mathbf{v}_h) \rVert_{\textbf{L}^s(\Omega)}^{s/2}\nonumber\\
    &&+ D_3 \lVert \varepsilon(\mathbf{u}-\mathbf{v}_h) \rVert_{\textbf{L}^s(\Omega)}^{\frac{1}{3-s}}+D_4 \lVert \varepsilon(\mathbf{u}-\mathbf{v}_h) \rVert_{\textbf{L}^s(\Omega)}\nonumber\\
    &&+D_5 \lVert \pi -q_h \rVert_{\textbf{L}^{s'}(\Omega)} 
    + D_6 \lVert \nabla(\vartheta-\vartheta_h)\rVert \lVert \varepsilon(\mathbf{u}_h) \rVert^{s-1}_{\textbf{L}^{(s-1)p_2}(\Omega)}\nonumber\\
  &\leq & D_7 \lVert \varepsilon(\mathbf{u}-\mathbf{v}_h) \rVert^{s-1}_{\textbf{L}^s(\Omega)} + D_5 \lVert \pi -q_h \rVert_{L^{s'}(\Omega)} \nonumber\\
    &&+~ D_6 \lVert \nabla(\vartheta-\vartheta_h)\|
    \| \varepsilon(\mathbf{u}_h)\|^{s-1}_{\textbf{L}^{(s-1)p_2}(\Omega)}
    \label{eq::velestiqf}
\end{eqnarray}
where in the last step  we assumed, for a sufficiently small $h$, the existence of  $\mathbf{v}_h$ so that   
$e_h:=\lVert \varepsilon(\mathbf{u}-\mathbf{v}_h) \rVert_{\textbf{L}^s(\Omega)} < 1$ (cf. (P1)) and we retained the lowest power of $e_h$ for  $s\in(1,2]$.

Now, using \eqref{eq::tempestif:etazero} and \eqref{eq::velestiqf} we obtain
\begin{eqnarray}
\lVert \nabla(\vartheta_h -s_h)\rVert 
        &\leq& \Lambda_1
  \lVert \nabla(\vartheta -s_h) \rVert  +  \Lambda_2   
        \left(
        \lVert \varepsilon(\mathbf{u}-\mathbf{v}_h)\rVert_{\textbf{L}^s(\Omega)} 
        +
        \lVert \varepsilon(\mathbf{v}_h-\mathbf{u}_h)\rVert_{\textbf{L}^s(\Omega)} \right)  \nonumber\\
         &\leq& \Lambda_1
  \lVert \nabla(\vartheta -s_h) \rVert +  \Lambda_2   
        \lVert \varepsilon(\mathbf{u}-\mathbf{v}_h)\rVert_{\textbf{L}^s(\Omega)} 
 \nonumber\\
 &&+ \Lambda_2\left( 
 D_5 \lVert \varepsilon(\mathbf{u}-\mathbf{v}_h) \rVert^{s-1}_{\textbf{L}^s(\Omega)} + D_2 \lVert \pi -q_h \rVert_{L^{s'}(\Omega)}\right)\nonumber\\
 && +\Lambda_2 D_6 \lVert \nabla(\vartheta-\vartheta_h)\rVert  \| \varepsilon(\mathbf{u}_h)\|^{s-1}_{\textbf{L}^{(s-1)p_2}(\Omega)}
 .\nonumber
\end{eqnarray}
Employing the triangle inequality we then get
\begin{eqnarray}
\lVert \nabla(\vartheta -\theta_h)\rVert
         &\leq& (1+\Lambda_1)
  \lVert \nabla(\vartheta -s_h) \rVert +  \Lambda_2   
        \lVert \varepsilon(\mathbf{u}-\mathbf{v}_h)\rVert_{\textbf{L}^s(\Omega)} 
 \nonumber\\
 &&+ \Lambda_2\left( 
 D_5 \lVert \varepsilon(\mathbf{u}-\mathbf{v}_h) \rVert^{s-1}_{\textbf{L}^s(\Omega)} + D_2 \lVert \pi -q_h \rVert_{L^{s'}(\Omega)}\right)\nonumber\\
 && +\Lambda_2 D_6 \lVert \nabla(\vartheta-\vartheta_h)\rVert  \| \varepsilon(\mathbf{u}_h)\|^{s-1}_{\textbf{L}^{(s-1)p_2}(\Omega)}
 .\label{thetah}
\end{eqnarray}
Therefore, in the case $s=p$, i.e., $\eta_\infty=0$, thanks to assumption (A1), we infer \eqref{eq::testi:etazero} and thus, by \eqref{eq::velestiqf}, \eqref{eq::uesti:etazero}. Finally, using both \eqref{eq::testi:etazero} and \eqref{eq::uesti:etazero}, from \eqref{eq::pressureesti:etazero} we are led to \eqref{eq::pesti:etazero}.
Analogously, in the case $s=2$, i.e., $\eta_\infty>0$, exploiting \eqref{thetah} and assuming (B1), we infer first \eqref{eq::testi:etazero1}, then \eqref{eq::uesti:etazero1} and in conclusion \eqref{eq::pesti:etazero1}. The proof is finished.

\subsection{Numerical Experiments}\label{S:numres}
The aim of this section is twofold: (a) to corroborate the theoretical estimates of Theorems \ref{theoesti:1} and \ref{theoesti:2}; (b) to explore the r\^{o}le of the regularizing parameter 
$\sigma$ in the approximating problem \eqref{Sy08}-\eqref{Sy11}. 

Having in mind these goals, we perform  the numerical tests on the two-dimensional unit square $\Omega=(0,1)^2$ by employing the following finite element spaces
\begin{eqnarray}
    \mathbf{V}_h &=& \{\mathbf{v}_h \in \textbf{C}(\overline{\Omega}) | \forall K \in \mathcal{T}_h, \ \mathbf{v}_h|_{K} \in \mathbb{P}_{r+1}(K)^2\},\nonumber\\
    {Q}_h &=& \{{q}_h \in C(\overline{\Omega}) | \forall K \in \mathcal{T}_h, \ {q}_h|_{K} \in \mathbb{P}_{r}(K)\},\nonumber\\
    {V}_h &=& \{\varrho_h \in C(\overline{\Omega}) | \forall K \in \mathcal{T}_h, \ \varrho_h|_{K} \in \mathbb{P}_{r+1}(K)\},\nonumber   
\end{eqnarray}
with $r\geq1$. Note that the compatibility condition \eqref{eq::bcompatidis} is satisfied; see, e.g., Refs. \refcite{brezziboffi,girault}. 

%Combining  Theorem \ref{theoesti}, the regularity results in Theorems \ref{existregol} and \ref{existregol} and usual interpolation error estimates (see, e.g., \cite{BrennerScott}) yields the following result.
%\begin{cor}\label{cor:estimate}
%Under the hypotheses of Theorem \ref{theoesti} there hold
%\begin{eqnarray}
%\lVert \mathbf{u}-\mathbf{u}_h \rVert_1 \lesssim h^{k+1} 
%\\ 
%\lVert \pi -\pi_h \rVert_0  \lesssim h^{k+1} \\ 
%\lVert {\vartheta}-{\vartheta}_h \rVert_1 \lesssim h^{k+1}.
%\end{eqnarray}
%\end{cor}

The discrete nonlinear problem \eqref{eq::liftweakdisc2}-\eqref{eq::variadisclift} is solved by resorting to the following fixed point strategy.

\noindent Set $(\mathbf{u}_h^{(0)},\vartheta^{(0)}_h)=(\mathbf{0},0)$, $k=0$  and iterate:

\noindent {\bf Step $1$}. Given 
$(\mathbf{u}_h^{(k)},\vartheta^{(k)}_h)$ 
find $(\mathbf{u}_h^{(k+1)}, \pi^{(k+1)}_h)$ such that for all $(\mathbf{v}_h, q_h)$ there holds
\begin{eqnarray}
\widetilde{a}_1(\vartheta^{(k)}_h+\Theta_0,\mathbf{u}_h^{(k)};\mathbf{u}_h^{(k+1)},\mathbf{v}_h) + b(\mathbf{v}_h,\pi^{(k+1)}_h) &=& \mathbf{f}(\mathbf{v}_h)\nonumber \\
b(\mathbf{u}_h^{(k+1)},q_h) &=& 0 \nonumber
\end{eqnarray}

 where $$
 \widetilde{a}_1(\vartheta, \mathbf{w} ; \mathbf{u}, \mathbf{v}) = \int_{\Omega}2 \nu(\vartheta) \eta(\vert {\varepsilon}(\mathbf{w})\vert^2)\varepsilon(\mathbf{u}) : \varepsilon(\mathbf{v}).
 $$
\noindent {\bf Step 2}. Given $\mathbf{u}_h^{(k+1)}$
find $\vartheta^{(k+1)}_h$ such that for all $\rho_h$ there holds
\begin{eqnarray}
a_2(\vartheta^{(k+1)}_h,\varrho_h) + c_h(\mathbf{u}_h^{(k)},\vartheta^{(k+1)}_h, \varrho_h)&=&g(\varrho_h) - a_2(\Theta_0,\varrho_h)-c_h(\mathbf{u}_h^{(k)},\Theta_0,\varrho_h).\nonumber
\end{eqnarray}

\noindent {\bf Step $3$}. $k+1\to k$

The iteration is stopped when 
$$
\|\mathbf{u}_h^{(k+1)}-\mathbf{u}_h^{(k)}\|_{\textbf{L}^s(\Omega)} +  \|\pi^{(k+1)}_h-\pi^{(k)}_h\|_{L^{s'}(\Omega)} + \|\vartheta^{(k+1)}_h-\vartheta^{(k)}_h\| < \texttt{tol}.
$$

Numerical tests are performed with ${\tt tol}=10^{-10}$ using the high level C++ interface of FEniCS-DOLFIN\cite{dolfin,fenics}.

\subsubsection{Test 1}\label{test1}
We consider the finite element approximation of the non-isothermal non-Newtonian flow problem governed by the Carreau law with $\nu(\xi)=e^{-\xi},\ \xi\in \R^+$,  cf. \eqref{eq::liftweakdisc2}-\eqref{eq::variadisclift}. The source term  $\mathbf{f}$ is manufactured so that  the exact solution  is given by

\begin{eqnarray}\label{eq:exactsolution}
u_x(x,y) &=& 5y \sin(x^2+y^2)+4y \sin(x^2-y^2), \\
u_y(x,y) &=& -5x \sin(x^2+y^2)+4x \sin(x^2-y^2), \\
\pi(x,y) &=& \sin(x+y), \\
\theta(x,y)&=&\cos(xy), 
\end{eqnarray}
where $\mathbf{u}=(u_x,u_y)$.  Dirichlet boundary conditions for  velocity and temperature given by the exact solution are imposed on the domain boundary. 

We first consider the problem with the Carreau law parameters in \eqref{carreau_law} defined as $\eta_\infty=0.5, \eta_0=2,  \lambda = 1$ for different values of $p=2, 1.6, 1.2$. 

The convergence results in terms of the $\textbf{L}^2$ and $\textbf{H}^1$ velocity errors, $L^2$ pressure error and $H^1$ temperature error, obtained using quadratic ($r=2$) finite elements for velocity and temperature and linear ($r=1$) finite elements for pressure, are reported in Figure \ref{fig:P2P1P2_nuinf05} (left).  

The corresponding results  when cubic ($r=3$) finite elements for velocity and temperature and quadratic ($r=2$) finite elements for pressure are adopted, as displayed in Figure  \ref{fig:P2P1P2_nuinf05} (right). In both cases, optimal convergence rates are achieved (with a slight superconvergence for the pressure error). This is in agreement with the approximation results of Theorem \ref{theoesti:2} combined with standard interpolation error estimates {(cf. Remark \ref{oss:orders})}.

\begin{figure}[hbt]
\centering
\begin{tabular}{cc}
\includegraphics[width=0.47\textwidth]{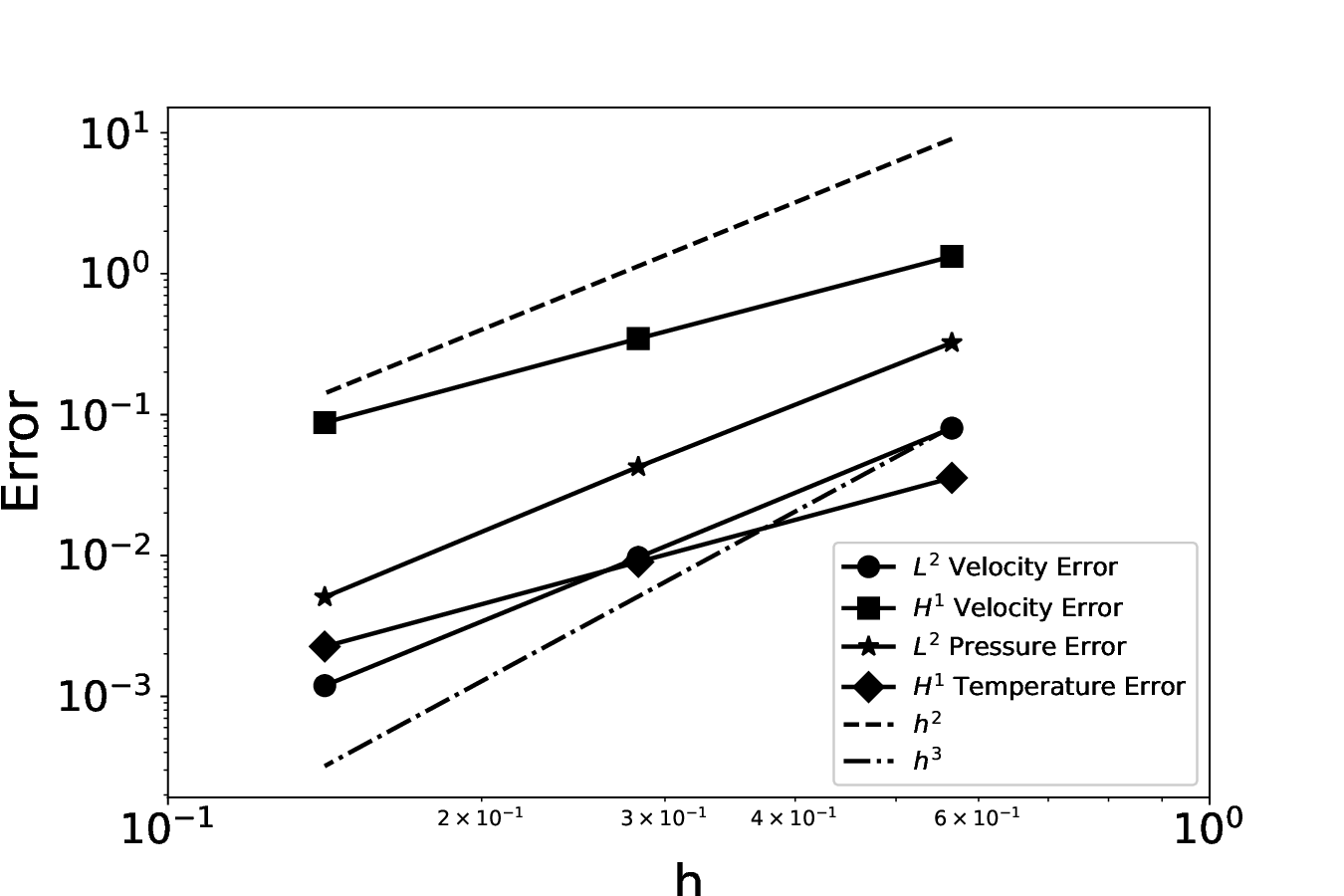} &
\includegraphics[width=0.47\textwidth]{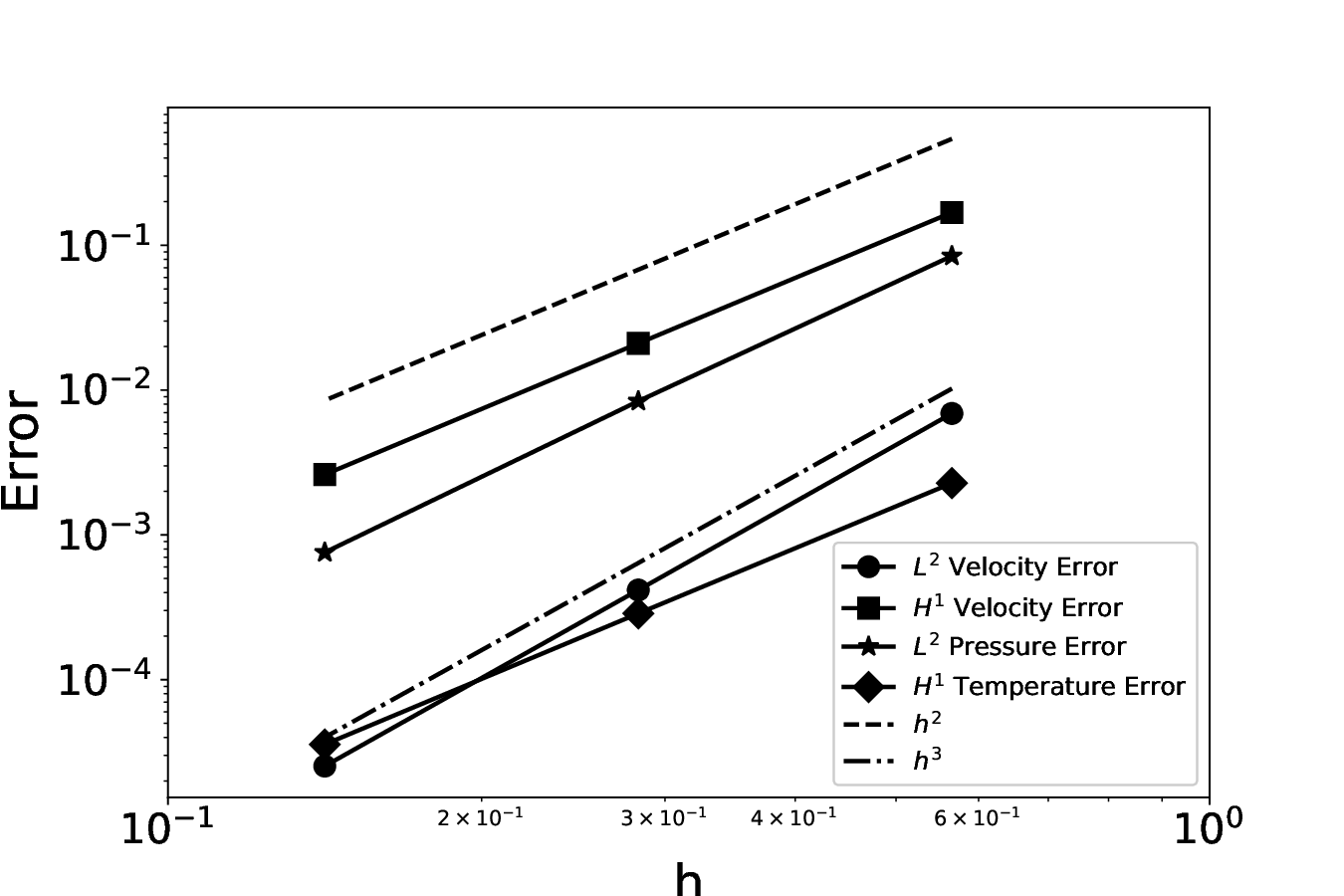} \\
\includegraphics[width=0.47\textwidth]{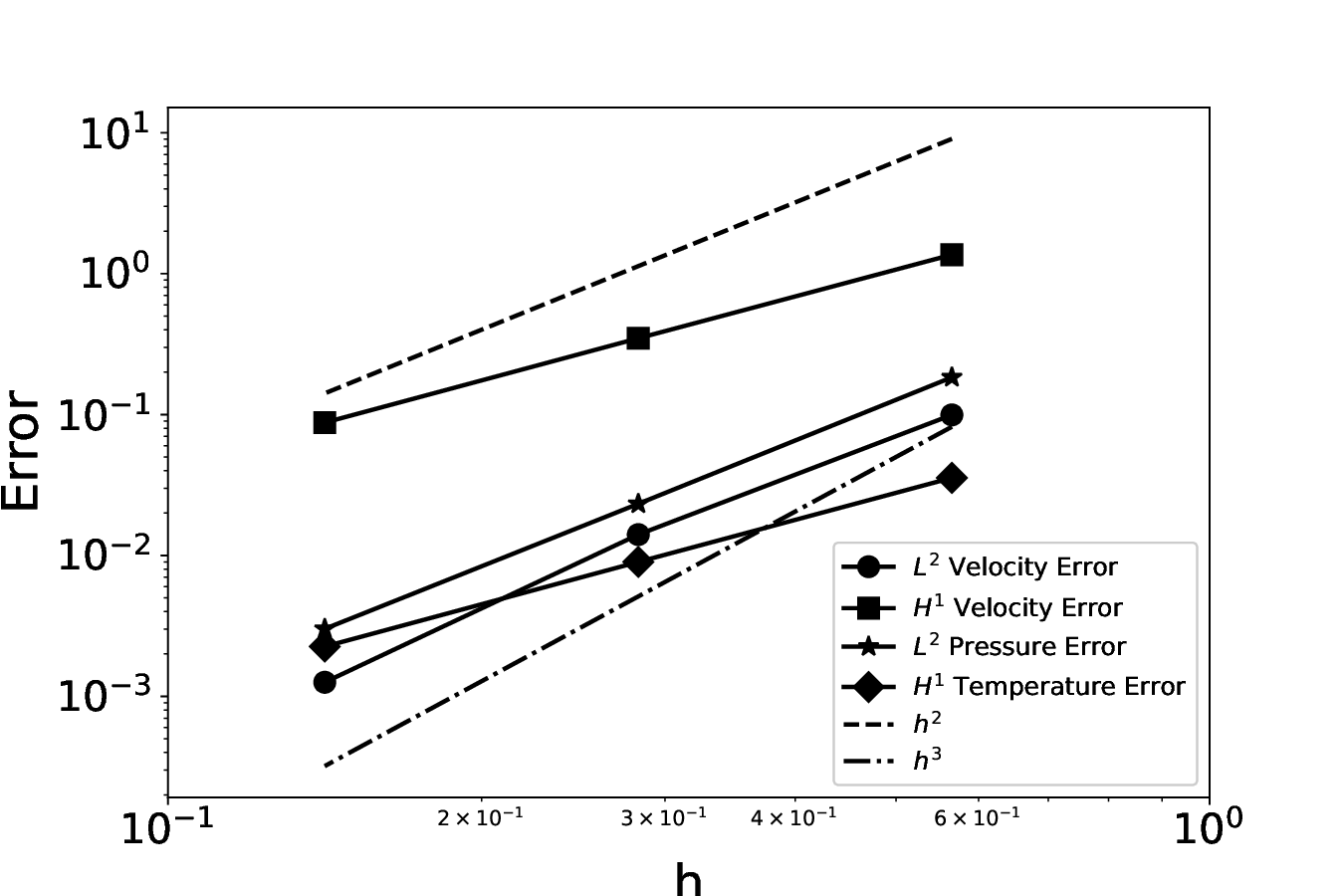} & 
\includegraphics[width=0.47\textwidth]{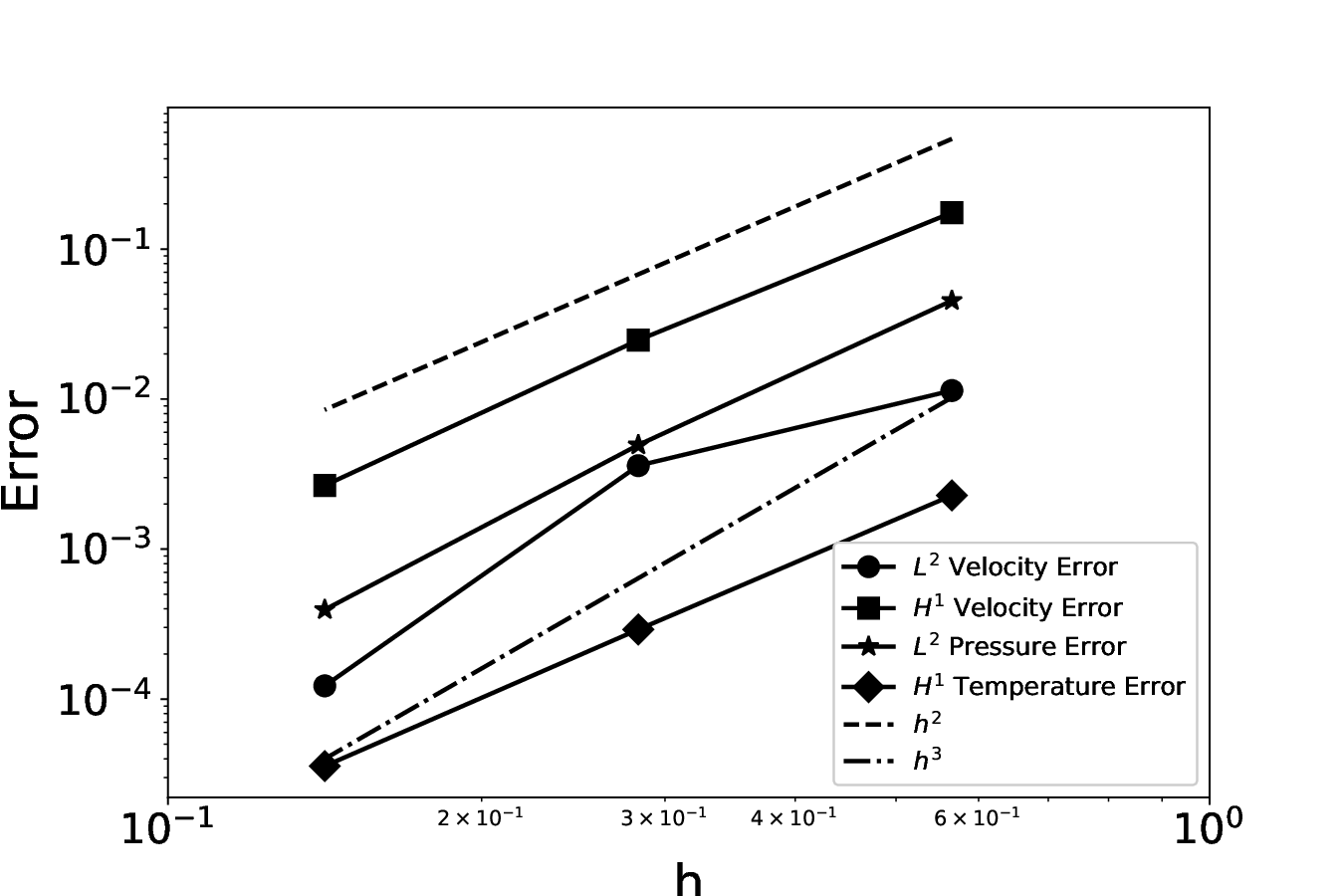} \\
\includegraphics[width=0.47\textwidth]{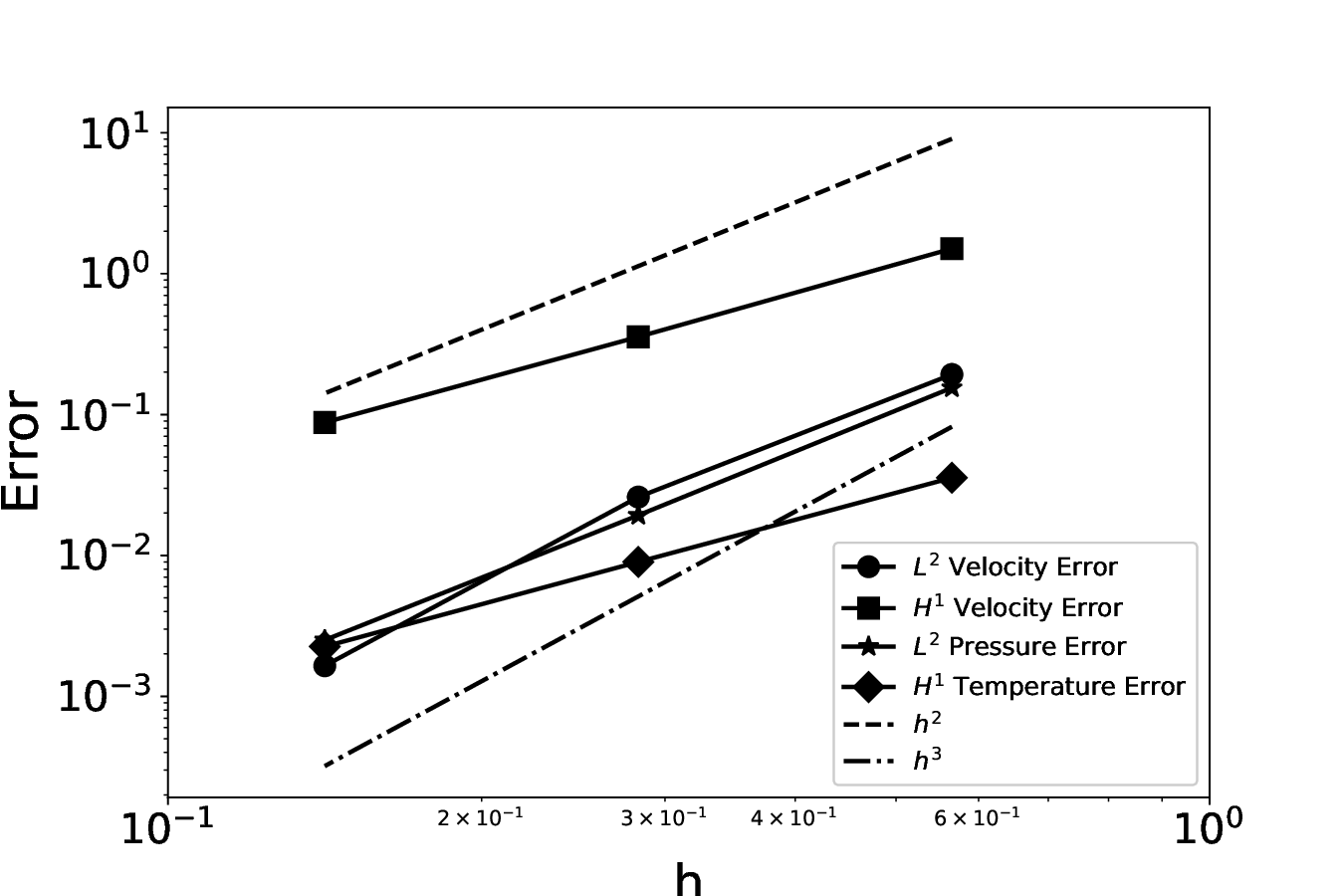} &
\includegraphics[width=0.47\textwidth]{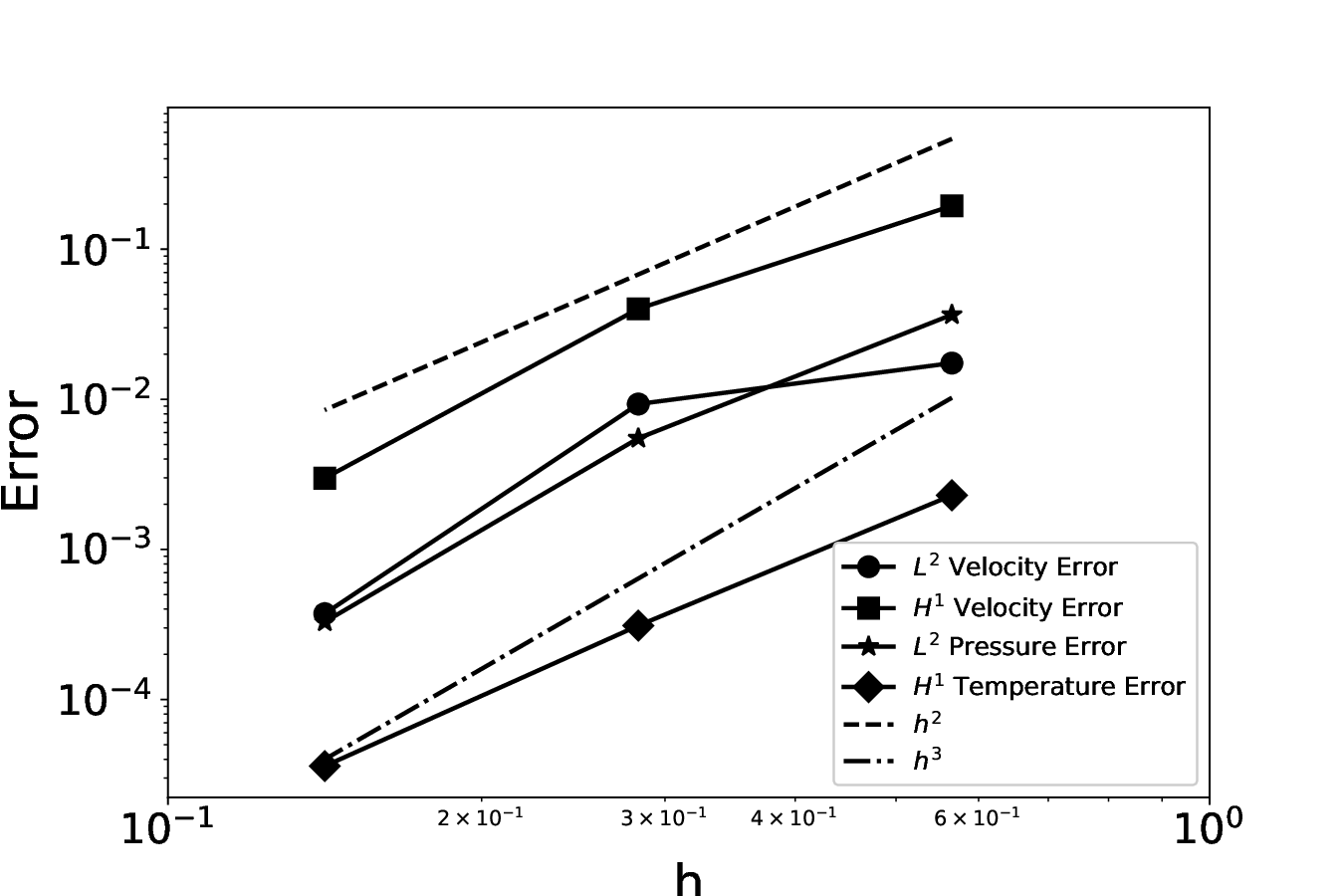} 
\end{tabular}
\caption{Test $1$. Convergence test for the Carreau model with $\eta_\infty=0.5, \eta_0=2,  \lambda = 1$ and different values of $p=2, 1.6, 1.2$ (from top to bottom) using $\mathcal{P}_2/\mathcal{P}_1/\mathcal{P}_2$ (left) and $\mathcal{P}_3/\mathcal{P}_2/\mathcal{P}_3$ (right) finite elements}
\label{fig:P2P1P2_nuinf05}
\end{figure}

\subsubsection{Test 2}\label{test2}

In this second test case, we investigate the r\^{o}le of the regularizing parameter $\sigma$ in the approximating problem \eqref{Sy08}-\eqref{Sy11} by solving the same manufactured solution problem introduced in the first test case, but considering the Carreau model with $\eta_\infty=0, \eta_0=2, \lambda = 1$ and different values of  $p$. Consistently with the value of $\eta_{\infty}$, the velocity error is computed in $\textbf{W}^{1,p}$, the pressure error in $L^{p'}$, while the temperature error is computed in $H^1$. 

The convergence results obtained using $\mathcal{P}_2/\mathcal{P}_1/\mathcal{P}_2$ finite elements are displayed in Figures \ref{fig:P2P1P2_p2_sigma}, \ref{fig:P2P1P2_p1.6_sigma} and \ref{fig:P2P1P2_p1.2_sigma} for  $p=2, 1.6, 1.2$, respectively.  In each plot the slope of the dotted reference line  is computed by employing the values of the corresponding error obtained for  $\sigma=0$ and the two smallest values of $h$. As expected, for $\sigma\not=0$ the error between the exact solution of \eqref{eq::liftweak}-\eqref{eq::variastokesnllift} with $\eta_\infty=0$ and the  approximation of the solution to the regularized problem \eqref{Sy08}-\eqref{Sy11} exhibits an asymptotic plateau for $h$ tending to zero, where the value of  the plateau decreases as $\sigma$ tends to zero. When the regularization parameter $\sigma$ is set to zero, from Theorem \ref{theoesti:1} and standard interpolation error estimates we expect the velocity and the  temperature errors to behave like $h^{2(p-1)}$, while the pressure error as $h^{2(p-1)^2}$ {(cf. Remark \ref{oss:orders})}. The obtained convergence results are coherent with the theoretical estimates in Theorem \ref{theoesti:1}, in particular the rate of convergence of the pressure error decreases with $p$. { However, we observe that in some cases (particularly for $p\not=2$) the expected asymptotic orders of convergence are exceeded; this may be related to the fact that the asymptotic regime is not yet reached.} We remark that a similar numerical behavior, in the context of isothermal non-Newtonian fluids, has been also  observed, e.g., in Ref. \refcite{Botti_et_al:2021}.

\begin{figure}[hbt]
\centering
\includegraphics[width=\textwidth]{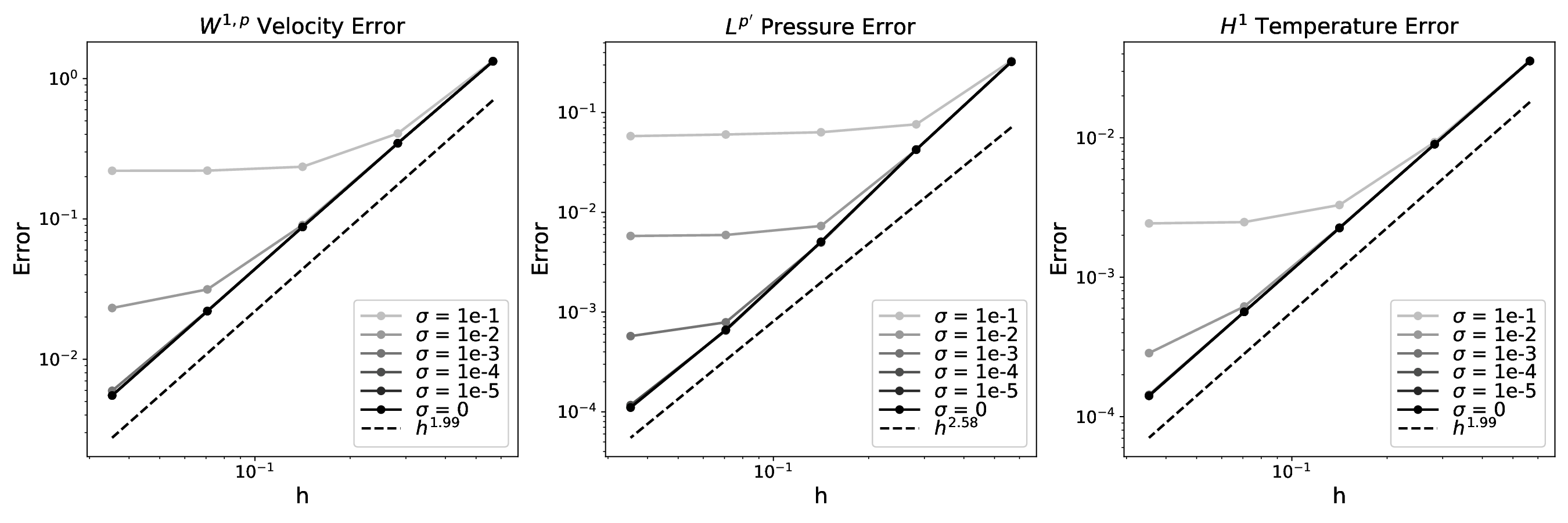}
\caption{Test $2$. Convergence test for the Carreau model with $\eta_\infty=0, \eta_0=2. \lambda = 1, p=2$ and different values of the regularization parameter $\sigma$ using $\mathcal{P}_2/\mathcal{P}_1/\mathcal{P}_2$ finite elements.}
\label{fig:P2P1P2_p2_sigma}
\end{figure}

\begin{figure}[hbt]
\centering
\includegraphics[width=\textwidth]{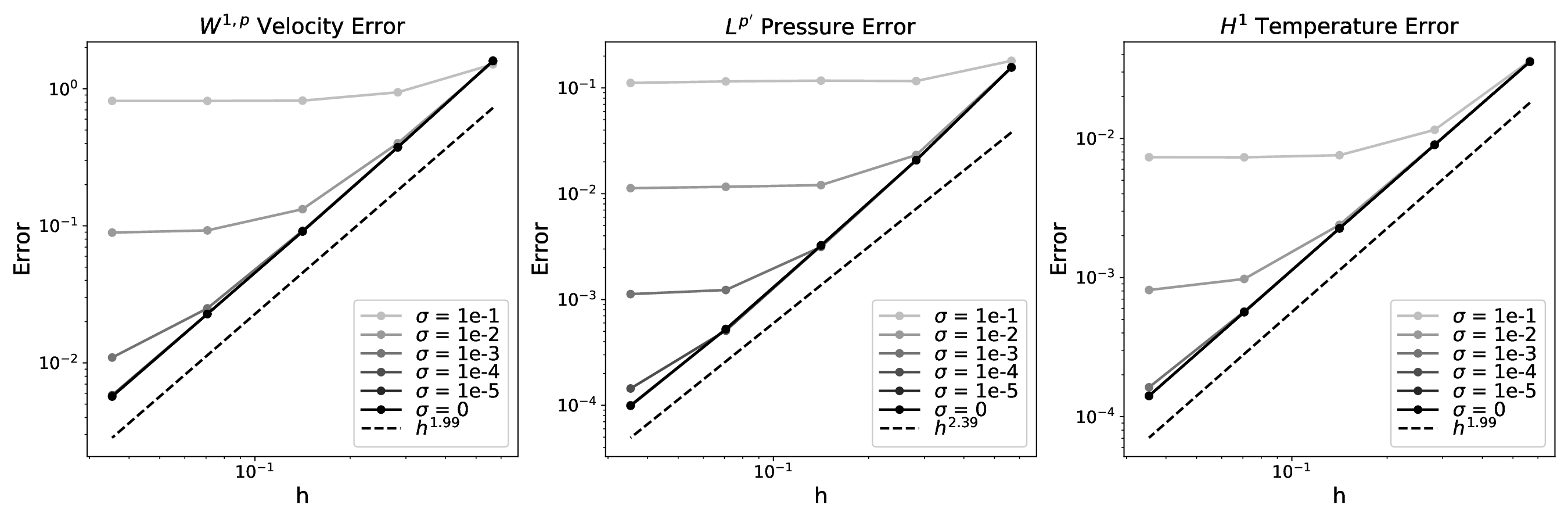}
\caption{Test $2$. Convergence test for the Carreau model with $\eta_\infty=0, \eta_0=2,  \lambda = 1, p=1.6$ and different values of the regularization parameter $\sigma$ using $\mathcal{P}_2/\mathcal{P}_1/\mathcal{P}_2$ finite elements.}
\label{fig:P2P1P2_p1.6_sigma}
\end{figure}

\begin{figure}[hbt]
\centering
\includegraphics[width=\textwidth]{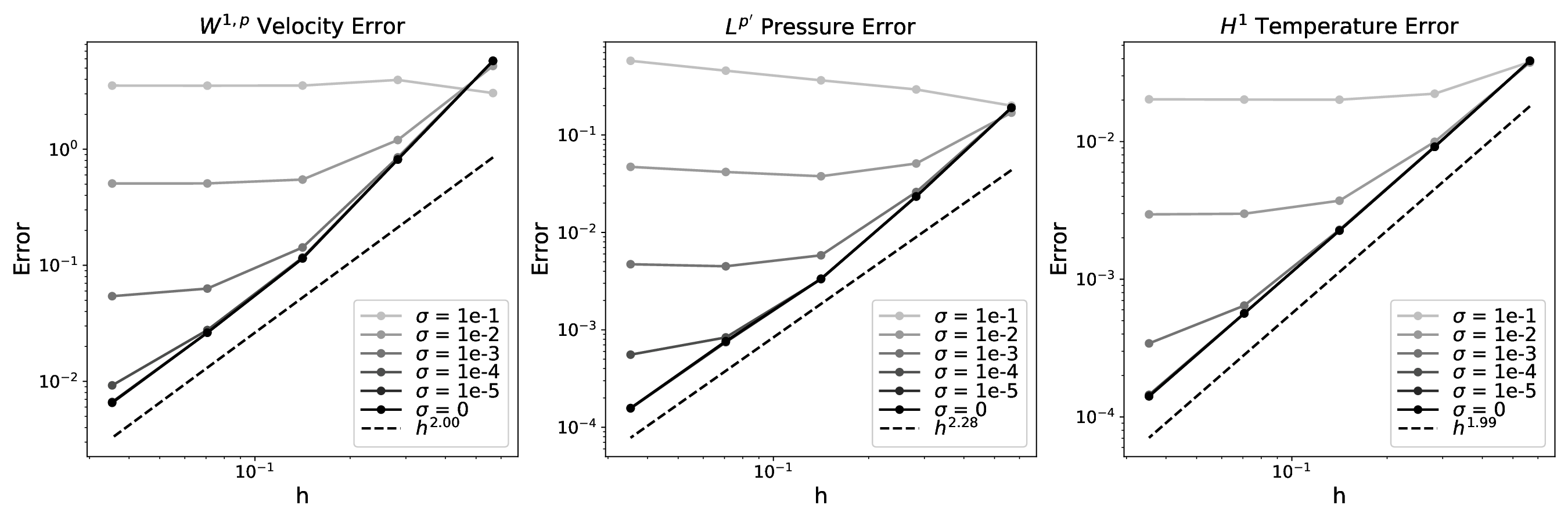}
\caption{Test $2$. Convergence test for the Carreau model with $\eta_\infty=0, \eta_0=2,  \lambda = 1, p=1.2$ and different values of the regularization parameter $\sigma$ using $\mathcal{P}_2/\mathcal{P}_1/\mathcal{P}_2$ finite elements.}
\label{fig:P2P1P2_p1.2_sigma}
\end{figure}

Similar comments apply when $\mathcal{P}_3/\mathcal{P}_2/\mathcal{P}_3$ finite elements are employed. In this case  we expect the velocity and the  temperature errors to behave like $h^{3(p-1)}$, while the pressure error to converge as $h^{3(p-1)^2}$ {(cf. Remark \ref{oss:orders})}. The corresponding results are  reported in  Figures \ref{fig:P3P2P3_p2_sigma}, \ref{fig:P3P2P3_p1.6_sigma} and \ref{fig:P3P2P3_p1.2_sigma} for  $p=2, 1.6, 1.2$, respectively. 

\begin{figure}[hbt]
\centering
\includegraphics[width=\textwidth]{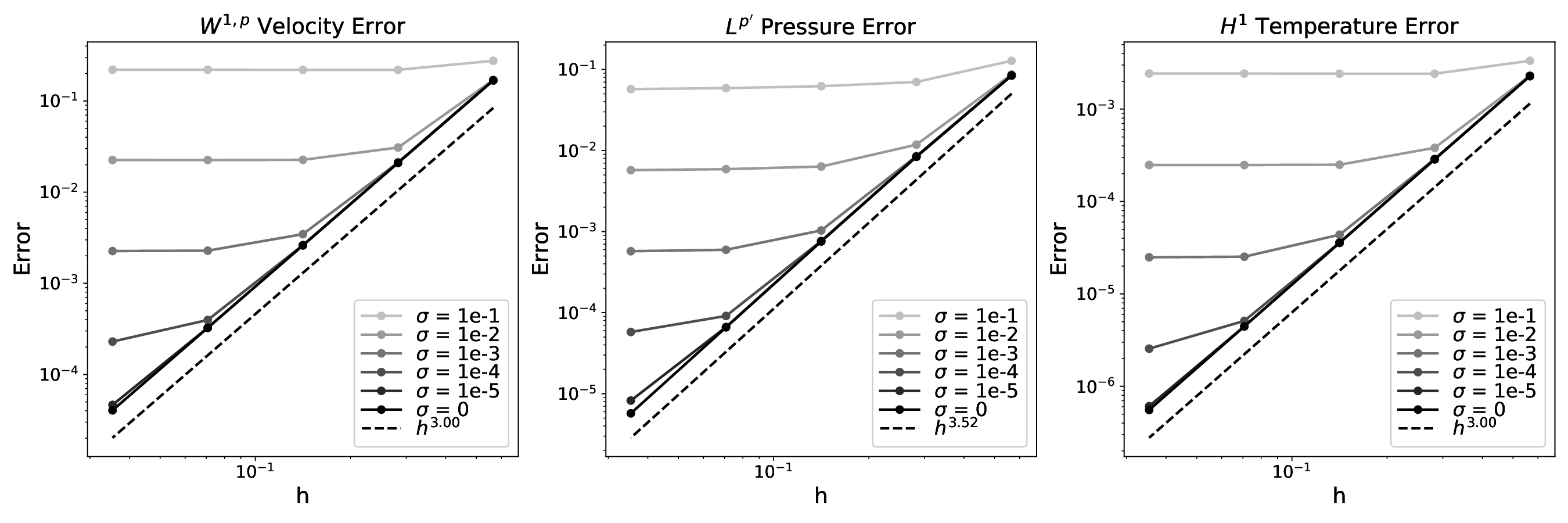}
\caption{Test $2$. Convergence test for the Carreau model with $\eta_\infty=0, \eta_0=2,  \lambda = 1, p=2$ and different values of the regularization parameter $\sigma$ using $\mathcal{P}_3/\mathcal{P}_2/\mathcal{P}_3$ finite elements.}
\label{fig:P3P2P3_p2_sigma}
\end{figure}

\begin{figure}[hbt]
\centering
\includegraphics[width=\textwidth]{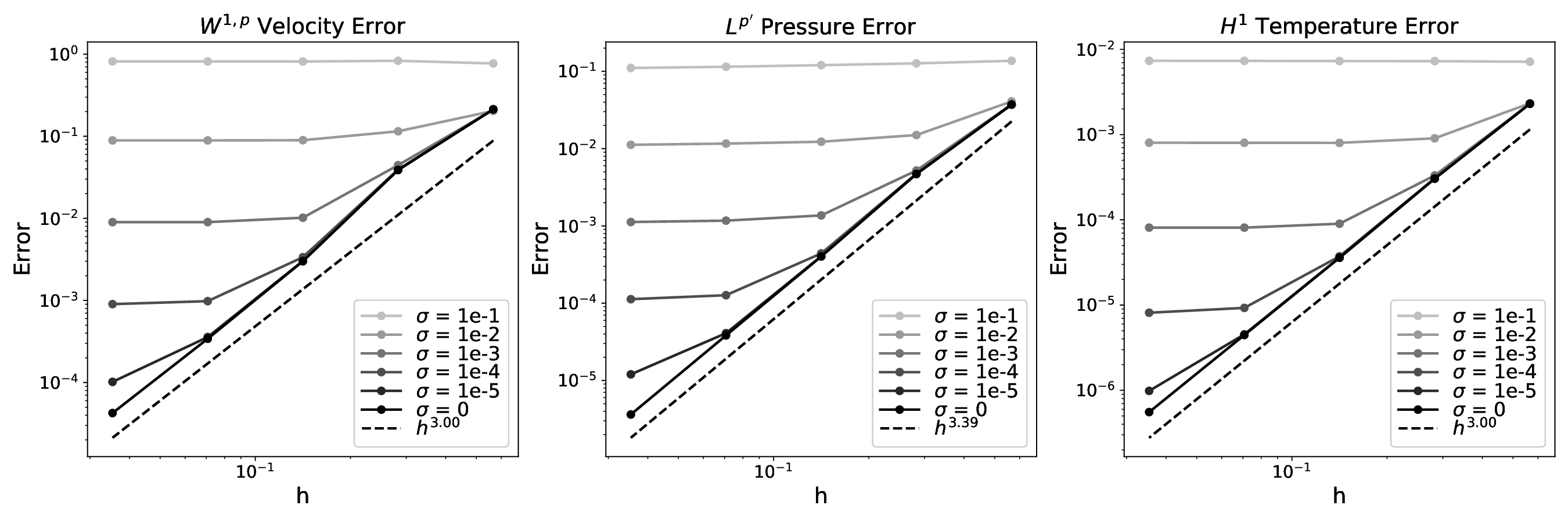}
\caption{Test $2$. Convergence test for the Carreau model with $\eta_\infty=0, \eta_0=2,  \lambda = 1, p=1.6$ and different values of the regularization parameter $\sigma$ using $\mathcal{P}_3/\mathcal{P}_2/\mathcal{P}_3$ finite elements. }
\label{fig:P3P2P3_p1.6_sigma}
\end{figure}

\clearpage

\begin{figure}[hbt]
\centering
\includegraphics[width=\textwidth]{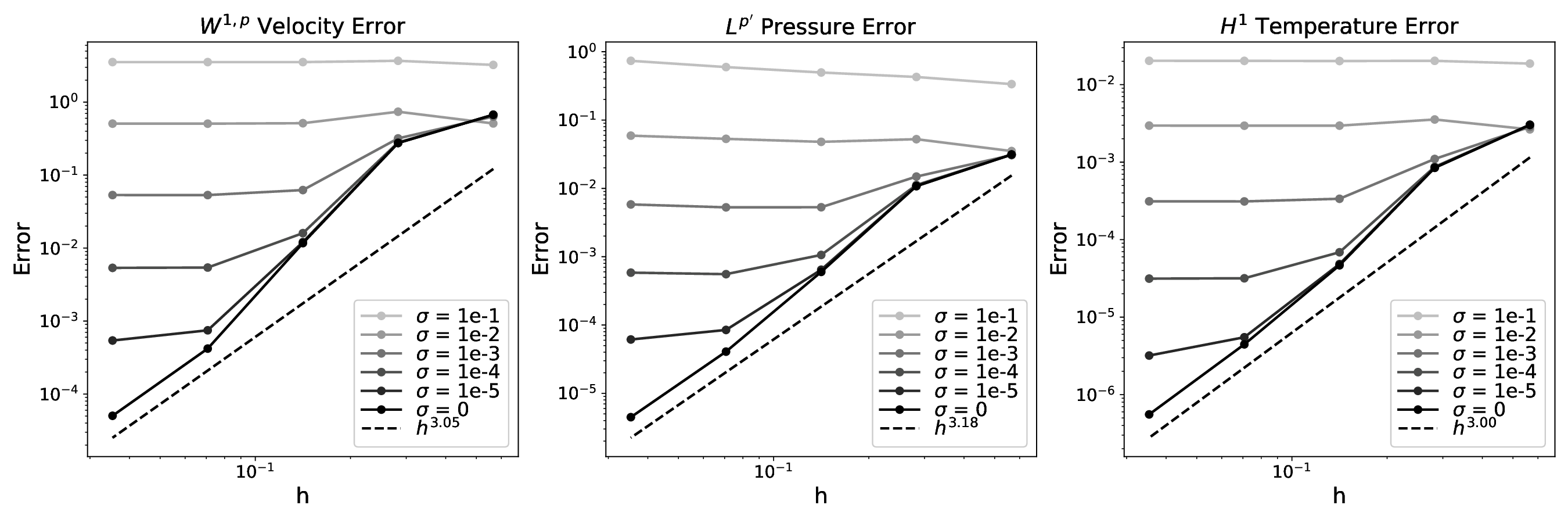}
\caption{Test $2$. Convergence test for the Carreau model with $\eta_\infty=0, \eta_0=2,  \lambda = 1, p=1.2$ and different values of the regularization parameter $\sigma$ using $\mathcal{P}_3/\mathcal{P}_2/\mathcal{P}_3$ finite elements. }
\label{fig:P3P2P3_p1.2_sigma}
\end{figure}

\section{Appendix}
{ Here we collect the main technical tools necessary for the proof of Theorem \ref{exist}. In particular, we report, for the sake of completeness, Ref. \refcite{Lip}, Theorem 2.5:
\begin{theorem}
Let $1<p<\infty$ and let $\Omega\subset \R^d$ be  bounded domain with Lipschitz boundary. Assume that $\{\mathbf{w}_n\}\subset \textbf{W}_0^{1,p}(\Omega)$ is such that $\mathbf{w}_n\rightharpoonup 0$ in $\textbf{W}_0^{1,p}(\Omega)$ as $n\to \infty$. Set 
$$
K:=\sup_{n}\Vert \mathbf{w}_n\Vert_{\textbf{W}^{1,p}(\Omega)}<\infty,\quad \gamma_n:=\Vert \mathbf{w}_n\Vert_{\textbf{L}^p(\Omega)}\to 0\quad \text{as }n\to \infty.
$$
Let $\theta_n>0$ be such that (e.g., $\theta_n:=\sqrt{\gamma_n}$)
$$
\theta_n\to 0 \quad\text{and }\quad \frac{\gamma_n}{\theta_n}\to 0\quad \text{as }n\to \infty.
$$
Let $\mu_j:=2^{2^j}$. Then there exists a sequence $\{\lambda_{n,j}\}$ of positive numbers such that 
$$
\mu_j\leq \lambda_{n,j}\leq \mu_{j+1},
$$
and a sequence $\{\mathbf{w}_{n,j}\}\subset \textbf{W}_0^{1,p}(\Omega)$ such that for all $j,n\in\mathbb{N}$
\begin{align}
\label{a01}
    &\Vert \mathbf{w}_{n,j}\Vert_{\textbf{L}^\infty(\Omega)}\leq \theta_n\to 0\quad \text{as }n\to\infty,\\&
    \Vert \nabla \mathbf{w}_{n,j}\Vert_{\textbf{L}^\infty(\Omega)}\leq c\lambda_{n,j}\leq c\mu_{j+1},\label{a02}
\end{align} 
for some $c>0$ independent of $j,n$. Moreover, for all $j\in\mathbb{N}$ and for $n\to\infty$, there holds
\begin{align*}
   &\mathbf{w}_{n,j}\to \textbf{0} \quad\text{in }\textbf{L}^s(\Omega)\quad\forall s\in[1,\infty],\\&
    \mathbf{w}_{n,j}\rightharpoonup \textbf{0}\quad \text{ in }\textbf{W}_0^{1,s}(\Omega)\quad \forall s\in[1,\infty),\\&
    \nabla\mathbf{w}_{n,j}\rightharpoonup^* \textbf{0}\quad\text{ in }\textbf{L}^\infty(\Omega).
\end{align*}
In conclusion, for all $n,j\in\mathbb{N}$, we have
\begin{align}
\Vert \nabla\mathbf{w}_{n,j}\chi_{\{\mathbf{w}_{n,j}\not=\mathbf{w}_n\}}\Vert_{\textbf{L}^p(\Omega)}\leq c\Vert \lambda_{n,j}\chi_{\{\mathbf{w}_{n,j}\not=\mathbf{w}_n\}}\Vert_{\textbf{L}^p(\Omega)}\leq c\frac{\gamma_n}{\theta_n}\mu_{j+1}+c\epsilon_j,
    \label{cc}
\end{align}
where $\epsilon_j:=K2^{-j/p}\to 0$ as $j\to \infty$. 
\label{app1}
\end{theorem}
We also state and prove the following lemma, which is a variant of Ref. \refcite{Lip}, Lemma 2.6, in order to take into account the presence of a temperature-dependent viscosity.
\begin{lemma}
\label{app2}
Let $\Omega$ and $p$ as in Theorem \ref{app1}. Let $\{\mathbf{u}_n\},\mathbf{u}\in \textbf{W}_0^{1,p}(\Omega)$ with $\mathbf{u}_n\rightharpoonup \mathbf{u}$ in $\textbf{W}_0^{1,p}(\Omega)$. Let $\mathbf{w}_n:=\mathbf{u}_n-\mathbf{u}$, and let $\mathbf{w}_{n,j}$ be the approximation of $\mathbf{w}_n$ as in Theorem \ref{app1}. Moreover, let $\{\varphi_n\},\varphi\in L^p(\Omega)$ be a sequence such that $\varphi_n\to \varphi$ in $L^p(\Omega)$ as $n\to \infty$. Assume that, for all $j\in \mathbb{N}$, we have
\begin{align}
\limsup_{n \to \infty} \int_\Omega \left(\nu(\varphi_n)\tau(x,\varepsilon(\mathbf{u}_n))-\nu(\varphi)\tau(x,\varepsilon(\mathbf{u}))\right):\varepsilon(\mathbf{w}_{n,j})dx\leq \delta_j,
\label{cca}
\end{align}
for $\delta_j>0$ such that $\delta_j\to 0$
as $j\to \infty$. Then, for any $\zeta\in(0,1)$, it holds
\begin{align}
\limsup_{n\to\infty}\int_\Omega \left[\nu(\varphi)(\tau(x,\varepsilon(\mathbf{u}_n))-\tau(x,\varepsilon(\mathbf{u}))):(\varepsilon(\mathbf{u}_n)-\varepsilon(\mathbf{u}))\right]^\zeta dx =0.
    \label{fond}
\end{align}
\end{lemma}

\begin{proof}
The assumptions imply that there is $C>0$, independent of $n$, such that
\begin{align}
\Vert \tau(\cdot,\varepsilon(\mathbf{u}_n))\Vert_{\textbf{L}^{p^\prime}(\Omega)}+\Vert \tau(\cdot,\varepsilon(\mathbf{u}))\Vert_{\textbf{L}^{p^\prime}(\Omega)}+\Vert \varepsilon(\mathbf{u}_n)\Vert_{\textbf{L}^{p}(\Omega)}+\Vert \varepsilon(\mathbf{u})\Vert_{\textbf{L}^{p}(\Omega)}\leq C.
    \label{ests}
\end{align}
Then, for any $j\in \mathbb{N}$, being $\varepsilon(\cdot)$ linear, $\{\mathbf{w}_{n,j}=\mathbf{w}_n\}=\Omega\setminus \{\mathbf{w}_{n,j}\not=\mathbf{w}_n\}$ and $\nu\in W^{1,\infty}(\R)$, we have that (see \eqref{cca})
\begin{align}
   \nonumber& \limsup_{n\to\infty} I_n^j:=\limsup_{n\to\infty}\int_{\{\mathbf{w}_{n,j}=\mathbf{w}_n\}}\nu(\varphi)(\tau(x,\varepsilon(\mathbf{u}_n))-\tau(x,\varepsilon(\mathbf{u}))):(\varepsilon(\mathbf{u}_n)-\varepsilon(\mathbf{u})) dx\\&\nonumber\leq \limsup_{n\to\infty}\int_{\{\mathbf{w}_{n,j}=\mathbf{w}_n\}}(\nu(\varphi_n)\tau(x,\varepsilon(\mathbf{u}_n))-\nu(\varphi)\tau(x,\varepsilon(\mathbf{u}))):\varepsilon(\mathbf{w}_{n,j})dx\\&\nonumber+
    \limsup_{n\to\infty}\int_{\{\mathbf{w}_{n,j}=\mathbf{w}_n\}}(\nu(\varphi)-\nu(\varphi_n))\tau(x,\varepsilon(\mathbf{u}_n)):\varepsilon(\mathbf{w}_{n,j}) dx\\&\nonumber
    \leq \delta_j+\limsup_{n\to\infty}\left\vert \int_{\{\mathbf{w}_{n,j}\not=\mathbf{w}_n\}}(\nu(\varphi_n)\tau(x,\varepsilon(\mathbf{u}_n))-\nu(\varphi)\tau(x,\varepsilon(\mathbf{u}))):\varepsilon(\mathbf{w}_{n,j})dx\right\vert\\&
    \nonumber+C\limsup_{n\to\infty}\Vert \varphi_n-\varphi\Vert_{L^p(\Omega)}\Vert \tau(x,\varepsilon(\mathbf{u}_n))\Vert_{\textbf{L}^{p^\prime}(\Omega)}\Vert \varepsilon(\mathbf{w}_{n,j})\Vert_{\textbf{L}^\infty(\Omega)}\\&\nonumber\leq \delta_j+C\limsup_{n\to\infty}(\Vert \tau(x,\varepsilon(\mathbf{u}_n))\Vert_{\textbf{L}^{p^\prime}(\Omega)}+\Vert \tau(x,\varepsilon(\mathbf{u}))\Vert_{\textbf{L}^{p^\prime}(\Omega)})\Vert \nabla\mathbf{w}_{n,j}\chi_{\{\mathbf{w}_{n,j}\not=\mathbf{w}_n\}}\Vert_{\textbf{L}^p(\Omega)}\\&\leq \delta_j+C\epsilon_j,
    \label{in}
\end{align}
where we exploited \eqref{ests}, the convergence $\varphi_n\to \varphi$ in $L^p(\Omega)$ and \eqref{a02} in the last but one inequality, whereas in the last step we exploited \eqref{cc}, recalling that $\frac{\gamma_n}{\theta_n}\to 0$ as $n\to\infty$, and \eqref{ests}. 
Recalling that, by the assumptions on $\tau$, it holds $\nu(\varphi)(\tau(x,\varepsilon(\mathbf{u}_n))-\tau(x,\varepsilon(\mathbf{u}))):\varepsilon(\mathbf{w}_n)dx\geq0$, we now have, for any $\zeta\in(0,1)$, using H\"{o}lder's inequality,
\begin{align}
  &\nonumber\limsup_{n\to\infty}\int_\Omega \left[\nu(\varphi)(\tau(x,\varepsilon(\mathbf{u}_n))-\tau(x,\varepsilon(\mathbf{u}))):\varepsilon(\mathbf{w}_n)\right]^\zeta dx \\&\nonumber\leq \limsup_{n\to\infty}\left[\int_{\{\mathbf{w}_{n,j}=\mathbf{w}_n\}}\nu(\varphi)(\tau(x,\varepsilon(\mathbf{u}_n))-\tau(x,\varepsilon(\mathbf{u}))):\varepsilon(\mathbf{w}_n)dx\right]^\zeta\vert \Omega\vert^{1-\zeta}\\&\nonumber
  +\limsup_{n\to\infty}\left[\int_{\{\mathbf{w}_{n,j}\not=\mathbf{w}_n\}}\nu(\varphi)(\tau(x,\varepsilon(\mathbf{u}_n))-\tau(x,\varepsilon(\mathbf{u}))):\varepsilon(\mathbf{w}_n)dx\right]^\zeta\left\vert \{\mathbf{w}_{n,j}\not=\mathbf{w}_n\}\right\vert^{1-\zeta}\\&\leq \limsup_{n\to\infty} (I^j_n)^\zeta\vert \Omega\vert^{1-\zeta}\nonumber\\&+C\limsup_{n\to\infty}(\Vert \tau(x,\varepsilon(\mathbf{u}_n))\Vert_{\textbf{L}^{p^\prime}(\Omega)}+\Vert \tau(x,\varepsilon(\mathbf{u}))\Vert_{\textbf{L}^{p^\prime}(\Omega)})\Vert \varepsilon(\mathbf{w}_n)\Vert_{\textbf{L}^p(\Omega)} \left\vert \{\mathbf{w}_{n,j}\not=\mathbf{w}_n\}\right\vert^{1-\zeta}\nonumber\\&\leq (\delta_j+C\epsilon_j)^\zeta\vert \Omega\vert^{1-\zeta}+C\limsup_{n\to\infty}\left\vert \{\mathbf{w}_{n,j}\not=\mathbf{w}_n\}\right\vert^{1-\zeta},
  \label{in2}
\end{align}
having applied \eqref{ests} and \eqref{in} in the last estimate. We can conclude by simply observing that, using H\"{o}lder's inequality once more, 
\begin{align*}
 & \limsup_{n\to\infty}\left\vert \{\mathbf{w}_{n,j}\not=\mathbf{w}_n\}\right\vert^{1-\zeta}=\limsup_{n\to\infty}\left\Vert \chi_{\{\mathbf{w}_{n,j}\not=\mathbf{w}_n\}}\right\Vert^{1-\zeta}_{L^1(\Omega)}\\&\leq C\lambda_{n,j}^{\zeta-1}\limsup_{n\to\infty}\left\Vert \lambda_{n,j} \chi_{\{\mathbf{w}_{n,j}\not=\mathbf{w}_n\}}\right\Vert^{1-\zeta}_{L^p(\Omega)}\leq C\epsilon_j^{1-\zeta},
\end{align*}
exploiting \eqref{cc} and the fact that $\lambda_{n,j}\geq 1$.
Therefore, from \eqref{in2}  we get 
$$
\limsup_{n\to\infty}\int_\Omega \left[\nu(\varphi)(\tau(x,\varepsilon(\mathbf{u}_n))-\tau(x,\varepsilon(\mathbf{u}))):\varepsilon(\mathbf{w}_n)\right]^\zeta dx\leq (\delta_j+C\epsilon_j)^\zeta\vert \Omega\vert^{1-\zeta}+ C\epsilon_j^{1-\zeta}.
$$
Being this valid for any $j\in\mathbb{N}$ and since $\lim_{j\to\infty}\epsilon_j= \lim_{j\to\infty} \delta_j=0$, we get \eqref{fond}. This ends the proof.
\end{proof}

}

\newpage

\bigskip
{\bf Acknowledgments}.
{The authors are grateful to the anonymous referee for the careful
reading of the manuscript as well as for the many valuable comments and suggestions which contributed to improve our results (in particular, the ones regarding the technical issues behind the proof of Theorem \ref{exist}).}
M.~Grasselli and A.~Poiatti are members of Gruppo Nazionale per
l'Analisi Ma\-te\-ma\-ti\-ca, la Probabilit\`{a} e le loro Applicazioni
(GNAMPA), Istituto Nazionale di Alta Matematica (INdAM). N.~Parolini and M.~Verani are members of Gruppo Nazionale per il Calcolo Scientifico (GNCS), Istituto Nazionale di Alta Matematica (INdAM). M.~Grasselli has been partially funded by MIUR PRIN research grant n. 2020F3NCPX.
M.~Verani has been partially funded by MIUR PRIN research grants n. 201744KLJL and n. 20204LN5N5. N.~Parolini has been partially funded by MIUR PRIN research grants n. 2017AXL54F and n. 20204LN5N5.
\bigskip

%\printbibliography
\end{document}